\definecolor{shadecolor}{rgb}{1,0.8,0.3}
\definecolor{myurlcolor}{rgb}{0.3,0,0}
\definecolor{mycitecolor}{rgb}{0,0,0.7}
\definecolor{myrefcolor}{rgb}{0,0,0.7}
\definecolor{hyperrefcolor}{rgb}{0.3,0,0}
\definecolor{rewritecolor}{rgb}{0,.9,1}
\tikzset{rewritenode/.style={shape=circle,fill=rewritecolor,scale=0.25,font=\Huge}}
\tikzset{RWopen/.style={shape=circle,draw=black,fill=white,scale=0.5,font=\Huge}}
\tikzset{RWclosed/.style={shape=circle,fill=black,scale=0.5,font=\Huge}}
\tikzset{CDnode/.style={shape=circle,fill=white,scale=.5}}
\let\ea\expandafter
\tikzset{tick/.style={postaction={decorate,decoration={markings,
mark=at position 0.4 with {\draw[-] (0,.4ex) -- (0,-.4ex);}}}}}
\definecolor{lblue}{rgb}{0,250,255}
\tikzstyle{species}=[circle,fill=yellow,draw=black,scale=1.15]
\tikzstyle{transition}=[rectangle,fill=lblue,draw=black,scale=1.15]
\tikzstyle{inarrow}=[->, >=stealth, shorten >=.03cm,line width=1.5]
\tikzstyle{empty}=[circle,fill=none, draw=none]
\tikzstyle{inputdot}=[circle,fill=purple,draw=purple, scale=.25]
\tikzstyle{inputarrow}=[->,draw=purple, shorten >=.05cm]
\tikzstyle{simple}=[-,draw=purple,line width=1.000]
\tikzstyle{none}=[inner sep=0pt]
\tikzset{->-/.style={decoration={
  markings,
  mark=at position .5 with {\arrow{>}}},postaction={decorate}}}
\tikzset{font=\footnotesize}
\tikzstyle{none}=[inner sep=0pt]
\tikzstyle{connection}=[circuit symbol open,
\tikzstyle{bdot}=[circle, fill=black, draw, inner sep=1.5pt, anchor=center]
\tikzstyle{circ}=[circle,fill=black,draw,inner sep=3pt]
\newcommand{\mult}[1]
{
\begin{aligned}
    \resizebox{#1}{!}{
\begin{tikzpicture}
	\begin{pgfonlayer}{nodelayer}
		\node [style=none] (0) at (1, -0) {};
		\node [style=circ] (1) at (0.125, -0) {};
		\node [style=none] (2) at (-1, 0.5) {};
		\node [style=none] (3) at (-1, -0.5) {};
	\end{pgfonlayer}
	\begin{pgfonlayer}{edgelayer}
		\draw[line width=2pt] (0.center) to (1.center);
		\draw[line width=2pt] [in=0, out=120, looseness=1.20] (1.center) to (2.center);
		\draw[line width=2pt] [in=0, out=-120, looseness=1.20] (1.center) to (3.center);
	\end{pgfonlayer}
      \end{tikzpicture}}
\end{aligned}
}
\newcommand{\unit}[1]
{
  \begin{aligned}
    \resizebox{#1}{!}{
\begin{tikzpicture}
	\begin{pgfonlayer}{nodelayer}
		\node [style=none] (spaceup) at (0, 0.5) {};
		\node [style=none] (spacedown) at (0, -0.5) {};
		\node [style=none] (0) at (1, -0) {};
		\node [style=none] (1) at (-1, -0) {};
		\node [style=circ] (2) at (0, -0) {};
	\end{pgfonlayer}
	\begin{pgfonlayer}{edgelayer}
		\draw[line width=2pt] (0.center) to (2);
	\end{pgfonlayer}
      \end{tikzpicture}}
  \end{aligned}
}
\newcommand{\comult}[1]
{
\begin{aligned}
    \resizebox{#1}{!}{
\begin{tikzpicture}
	\begin{pgfonlayer}{nodelayer}
		\node [style=none] (0) at (-1, -0) {};
		\node [style=circ] (1) at (-0.125, -0) {};
		\node [style=none] (2) at (1, 0.5) {};
		\node [style=none] (3) at (1, -0.5) {};
	\end{pgfonlayer}
	\begin{pgfonlayer}{edgelayer}
		\draw[line width=2pt] (0.center) to (1.center);
		\draw[line width=2pt] [in=180, out=60, looseness=1.20] (1.center) to (2.center);
		\draw[line width=2pt] [in=180, out=-60, looseness=1.20] (1.center) to (3.center);
	\end{pgfonlayer}
      \end{tikzpicture}}
\end{aligned}
}
\newcommand{\counit}[1]
{
  \begin{aligned}
    \resizebox{#1}{!}{
\begin{tikzpicture}
	\begin{pgfonlayer}{nodelayer}
		\node [style=none] (spaceup) at (0, 0.5) {};
		\node [style=none] (spacedown) at (0, -0.5) {};
		\node [style=none] (0) at (-1, -0) {};
		\node [style=none] (1) at (1, -0) {};
		\node [style=circ] (2) at (0, -0) {};
	\end{pgfonlayer}
	\begin{pgfonlayer}{edgelayer}
		\draw[line width=2pt] (0.center) to (2);
	\end{pgfonlayer}
      \end{tikzpicture}}
  \end{aligned}
}
\newcommand{\idone}[1]
{
  \begin{aligned}
    \resizebox{#1}{!}{
     \begin{tikzpicture}
	\begin{pgfonlayer}{nodelayer}
		\node [style=none] (0) at (-1, -0) {};
		\node [style=none] (1) at (1, -0) {};
		\node [style=none] (2) at (0, 0.5) {};
		\node [style=none] (3) at (0, -0.5) {};
	\end{pgfonlayer}
	\begin{pgfonlayer}{edgelayer}
		\draw[line width=2pt] (1.center) to (0.center);
	\end{pgfonlayer}
\end{tikzpicture} 
    }
  \end{aligned}
}
\newcommand{\swap}[1]
{
  \begin{aligned}
    \resizebox{#1}{!}{
\begin{tikzpicture}
	\begin{pgfonlayer}{nodelayer}
		\node [style=none] (2) at (-0.5, -0.5) {};
		\node [style=none] (3) at (-2, 0.5) {};
		\node [style=none] (4) at (-0.5, 0.5) {};
		\node [style=none] (5) at (-2, -0.5) {};
	\end{pgfonlayer}
	\begin{pgfonlayer}{edgelayer}
		\draw[line width=2pt] [in=180, out=0, looseness=1.00] (3.center) to (2.center);
		\draw[line width=2pt] [in=0, out=180, looseness=1.00] (4.center) to (5.center);
	\end{pgfonlayer}
\end{tikzpicture}
    }
  \end{aligned}
}
\newcommand{\assocl}[1]
{
  \begin{aligned}
    \resizebox{#1}{!}{
\begin{tikzpicture}
	\begin{pgfonlayer}{nodelayer}
		\node [style=circ] (0) at (0.125, -0) {};
		\node [style=none] (1) at (-1, 0.5) {};
		\node [style=none] (2) at (-1, -0.5) {};
		\node [style=none] (3) at (0, -1) {};
		\node [style=none] (4) at (2.25, -0.5) {};
		\node [style=none] (5) at (0.25, -0) {};
		\node [style=circ] (6) at (1.25, -0.5) {};
		\node [style=none] (7) at (-1, -1) {};
	\end{pgfonlayer}
	\begin{pgfonlayer}{edgelayer}
		\draw[line width=2pt] [in=0, out=120, looseness=1.20] (0.center) to (1.center);
		\draw[line width=2pt] [in=0, out=-120, looseness=1.20] (0.center) to (2.center);
		\draw[line width=2pt] (4.center) to (6);
		\draw[line width=2pt] [in=0, out=120, looseness=1.20] (6) to (5.center);
		\draw[line width=2pt] [in=0, out=-120, looseness=1.20] (6) to (3.center);
		\draw[line width=2pt] (3.center) to (7.center);
	\end{pgfonlayer}
      \end{tikzpicture}}
  \end{aligned}
}
\newcommand{\assocr}[1]
{
  \begin{aligned}
    \resizebox{#1}{!}{
\begin{tikzpicture}
	\begin{pgfonlayer}{nodelayer}
		\node [style=circ] (0) at (0.125, -0.5) {};
		\node [style=none] (1) at (-1, -1) {};
		\node [style=none] (2) at (-1, 0) {};
		\node [style=none] (3) at (0, 0.5) {};
		\node [style=none] (4) at (2.25, 0) {};
		\node [style=none] (5) at (0.25, -0.5) {};
		\node [style=circ] (6) at (1.25, 0) {};
		\node [style=none] (7) at (-1, 0.5) {};
	\end{pgfonlayer}
	\begin{pgfonlayer}{edgelayer}
		\draw[line width=2pt] [in=0, out=-120, looseness=1.20] (0.center) to (1.center);
		\draw[line width=2pt] [in=0, out=120, looseness=1.20] (0.center) to (2.center);
		\draw[line width=2pt] (4.center) to (6);
		\draw[line width=2pt] [in=0, out=-120, looseness=1.20] (6) to (5.center);
		\draw[line width=2pt] [in=0, out=120, looseness=1.20] (6) to (3.center);
		\draw[line width=2pt] (3.center) to (7.center);
	\end{pgfonlayer}
      \end{tikzpicture}}
  \end{aligned}
}
\newcommand{\unitl}[1]
{
  \begin{aligned}
    \resizebox{#1}{!}{
\begin{tikzpicture}
	\begin{pgfonlayer}{nodelayer}
		\node [style=none] (0) at (1, -0) {};
		\node [style=circ] (1) at (0.125, -0) {};
		\node [style=circ] (2) at (-1, 0.5) {};
		\node [style=none] (3) at (-1, -0.5) {};
		\node [style=none] (4) at (-2, -0.5) {};
	\end{pgfonlayer}
	\begin{pgfonlayer}{edgelayer}
		\draw[line width=2pt] (0.center) to (1.center);
		\draw[line width=2pt] [in=0, out=120, looseness=1.20] (1.center) to (2.center);
		\draw[line width=2pt] [in=0, out=-120, looseness=1.20] (1.center) to (3.center);
		\draw[line width=2pt] (4.center) to (3.center);
	\end{pgfonlayer}
\end{tikzpicture}
    }
  \end{aligned}
}
\newcommand{\unitr}[1]
{
  \begin{aligned}
    \resizebox{#1}{!}{
\begin{tikzpicture}
	\begin{pgfonlayer}{nodelayer}
		\node [style=none] (0) at (1, -0) {};
		\node [style=circ] (1) at (0.125, -0) {};
		\node [style=circ] (2) at (-1, -0.5) {};
		\node [style=none] (3) at (-1, 0.5) {};
		\node [style=none] (4) at (-2, 0.5) {};
	\end{pgfonlayer}
	\begin{pgfonlayer}{edgelayer}
		\draw[line width=2pt] (0.center) to (1.center);
		\draw[line width=2pt] [in=0, out=-120, looseness=1.20] (1.center) to (2.center);
		\draw[line width=2pt] [in=0, out=120, looseness=1.20] (1.center) to (3.center);
		\draw[line width=2pt] (4.center) to (3.center);
	\end{pgfonlayer}
\end{tikzpicture}
    }
  \end{aligned}
}
\newcommand{\commute}[1]
{
  \begin{aligned}
    \resizebox{#1}{!}{
\begin{tikzpicture}
	\begin{pgfonlayer}{nodelayer}
		\node [style=none] (0) at (1.25, -0) {};
		\node [style=circ] (1) at (0.375, -0) {};
		\node [style=none] (2) at (-0.5, -0.5) {};
		\node [style=none] (3) at (-2, 0.5) {};
		\node [style=none] (4) at (-0.5, 0.5) {};
		\node [style=none] (5) at (-2, -0.5) {};
	\end{pgfonlayer}
	\begin{pgfonlayer}{edgelayer}
		\draw[line width=2pt] (0.center) to (1.center);
		\draw[line width=2pt] [in=0, out=-120, looseness=1.20] (1.center) to (2.center);
		\draw[line width=2pt] [in=180, out=0, looseness=1.00] (3.center) to (2.center);
		\draw[line width=2pt] [in=0, out=120, looseness=1.20] (1.center) to (4.center);
		\draw[line width=2pt] [in=0, out=180, looseness=1.00] (4.center) to (5.center);
	\end{pgfonlayer}
\end{tikzpicture}
    }
  \end{aligned}
}
\newcommand{\cocommute}[1]
{
  \begin{aligned}
    \resizebox{#1}{!}{
\begin{tikzpicture}
	\begin{pgfonlayer}{nodelayer}
		\node [style=none] (0) at (-2, -0) {};
		\node [style=circ] (1) at (-1.125, -0) {};
		\node [style=none] (2) at (-0.25, -0.5) {};
		\node [style=none] (3) at (1.25, 0.5) {};
		\node [style=none] (4) at (-0.25, 0.5) {};
		\node [style=none] (5) at (1.25, -0.5) {};
	\end{pgfonlayer}
	\begin{pgfonlayer}{edgelayer}
		\draw[line width=2pt] (0.center) to (1.center);
		\draw[line width=2pt] [in=180, out=-60, looseness=1.20] (1.center) to (2.center);
		\draw[line width=2pt] [in=0, out=180, looseness=1.00] (3.center) to (2.center);
		\draw[line width=2pt] [in=180, out=60, looseness=1.20] (1.center) to (4.center);
		\draw[line width=2pt] [in=180, out=0, looseness=1.00] (4.center) to (5.center);
	\end{pgfonlayer}
\end{tikzpicture}
    }
  \end{aligned}
}
\newcommand{\frobs}[1]
{
  \begin{aligned}
    \resizebox{#1}{!}{
\begin{tikzpicture}
	\begin{pgfonlayer}{nodelayer}
		\node [style=none] (0) at (-1.5, 0.5) {};
		\node [style=circ] (1) at (-0.75, 0.5) {};
		\node [style=none] (2) at (0.25, -0) {};
		\node [style=none] (3) at (0.25, 1) {};
		\node [style=circ] (4) at (1, -0.5) {};
		\node [style=none] (5) at (0, -0) {};
		\node [style=none] (6) at (1.75, -0.5) {};
		\node [style=none] (7) at (0, -1) {};
		\node [style=none] (8) at (1.75, 1) {};
		\node [style=none] (9) at (-1.5, -1) {};
	\end{pgfonlayer}
	\begin{pgfonlayer}{edgelayer}
		\draw[line width=2pt] [in=180, out=-60, looseness=1.20] (1) to (2.center);
		\draw[line width=2pt] [in=180, out=60, looseness=1.20] (1) to (3.center);
		\draw[line width=2pt] (0.center) to (1);
		\draw[line width=2pt] (6.center) to (4);
		\draw[line width=2pt] [in=0, out=120, looseness=1.20] (4) to (5.center);
		\draw[line width=2pt] [in=0, out=-120, looseness=1.20] (4) to (7.center);
		\draw[line width=2pt] (3.center) to (8.center);
		\draw[line width=2pt] (7.center) to (9.center);
	\end{pgfonlayer}
\end{tikzpicture}
    }
  \end{aligned}
}
\newcommand{\frobx}[1]
{
  \begin{aligned}
    \resizebox{#1}{!}{
\begin{tikzpicture}
	\begin{pgfonlayer}{nodelayer}
		\node [style=circ] (0) at (-0.5, -0) {};
		\node [style=none] (1) at (-1.5, -0.5) {};
		\node [style=none] (2) at (-1.5, 0.5) {};
		\node [style=circ] (3) at (0.5, -0) {};
		\node [style=none] (4) at (1.5, -0.5) {};
		\node [style=none] (5) at (1.5, 0.5) {};
	\end{pgfonlayer}
	\begin{pgfonlayer}{edgelayer}
		\draw[line width=2pt] [in=0, out=-120, looseness=1.20] (0.center) to (1.center);
		\draw[line width=2pt] [in=0, out=120, looseness=1.20] (0.center) to (2.center);
		\draw[line width=2pt] [in=180, out=-60, looseness=1.20] (3) to (4.center);
		\draw[line width=2pt] [in=180, out=60, looseness=1.20] (3) to (5.center);
		\draw[line width=2pt] (0) to (3);
	\end{pgfonlayer}
\end{tikzpicture}
    }
  \end{aligned}
}
\newcommand{\frobz}[1]
{
  \begin{aligned}
    \resizebox{#1}{!}{
\begin{tikzpicture}
	\begin{pgfonlayer}{nodelayer}
		\node [style=none] (0) at (1.75, 0.5) {};
		\node [style=circ] (1) at (1, 0.5) {};
		\node [style=none] (2) at (0, -0) {};
		\node [style=none] (3) at (0, 1) {};
		\node [style=circ] (4) at (-0.75, -0.5) {};
		\node [style=none] (5) at (0.25, -0) {};
		\node [style=none] (6) at (-1.5, -0.5) {};
		\node [style=none] (7) at (0.25, -1) {};
		\node [style=none] (8) at (-1.5, 1) {};
		\node [style=none] (9) at (1.75, -1) {};
	\end{pgfonlayer}
	\begin{pgfonlayer}{edgelayer}
		\draw[line width=2pt] [in=0, out=-120, looseness=1.20] (1) to (2.center);
		\draw[line width=2pt] [in=0, out=120, looseness=1.20] (1) to (3.center);
		\draw[line width=2pt] (0.center) to (1);
		\draw[line width=2pt] (6.center) to (4);
		\draw[line width=2pt] [in=180, out=60, looseness=1.20] (4) to (5.center);
		\draw[line width=2pt] [in=180, out=-60, looseness=1.20] (4) to (7.center);
		\draw[line width=2pt] (3.center) to (8.center);
		\draw[line width=2pt] (7.center) to (9.center);
	\end{pgfonlayer}
\end{tikzpicture}
    }
  \end{aligned}
}
\newcommand{\spec}[1]
{
  \begin{aligned}
    \resizebox{#1}{!}{
\begin{tikzpicture}
	\begin{pgfonlayer}{nodelayer}
		\node [style=none] (0) at (1.75, -0) {};
		\node [style=circ] (1) at (0.75, -0) {};
		\node [style=none] (2) at (0, -0.5) {};
		\node [style=none] (3) at (0, 0.5) {};
		\node [style=circ] (4) at (-0.75, -0) {};
		\node [style=none] (5) at (0, -0.5) {};
		\node [style=none] (6) at (-1.75, -0) {};
		\node [style=none] (7) at (0, 0.5) {};
	\end{pgfonlayer}
	\begin{pgfonlayer}{edgelayer}
		\draw[line width=2pt] (0.center) to (1.center);
		\draw[line width=2pt] [in=0, out=-120, looseness=1.20] (1.center) to (2.center);
		\draw[line width=2pt] [in=0, out=120, looseness=1.20] (1.center) to (3.center);
		\draw[line width=2pt] (6.center) to (4);
		\draw[line width=2pt] [in=180, out=-60, looseness=1.20] (4) to (5.center);
		\draw[line width=2pt] [in=180, out=60, looseness=1.20] (4) to (7.center);
	\end{pgfonlayer}
\end{tikzpicture}
    }
  \end{aligned}
}
\newcommand{\cuptwo}[1]
{
\begin{aligned}
\begin{tikzpicture}[circuit ee IEC, set resistor graphic=var resistor IEC
      graphic,scale=.5]
\scalebox{1}{
		\node [style=none] (0) at (1, -0) {};
		\node [style=none] (1) at (0.125, -0) {};
		\node [style=none] (2) at (-1, 0.5) {};
		\node [style=none] (3) at (-1, -0.5) {};
		\node [style=none] (4) at (1, -0) {};
	\draw[line width = 1.5pt] (2) to [in=0, out=0,looseness = 4] (3);
}
      \end{tikzpicture}
\end{aligned}
}
\newcommand{\captwo}[1]
{
\begin{aligned}
\begin{tikzpicture}
[circuit ee IEC, set resistor graphic=var resistor IEC
      graphic,scale=.5]
\scalebox{1}{
		\node [style=none] (0) at (-.75, -0) {};
		\node [style=none] (1) at (-0.125, -0) {};
		\node [style=none] (2) at (1, 0.5) {};
		\node [style=none] (3) at (1, -0.5) {};
		\node [style=none] (4) at (-1, -0) {};
		\node [style=none] (5) at (-0.5, -.5) {};
		\node [style=none] (6) at (-0.5, .5) {};
	\draw[line width = 1.5pt] (2) to [in=180, out=180,looseness = 4] (3);
}
      \end{tikzpicture}
\end{aligned}
}
\def\mdef#1#2{\ea\ea\ea\gdef\ea\ea\noexpand#1\ea{\ea\ensuremath\ea{#2}}}
\def\alwaysmath#1{\ea\ea\ea\global\ea\ea\ea\let\ea\ea\csname your@#1\endcsname\csname #1\endcsname
  \ea\def\csname #1\endcsname{\ensuremath{\csname your@#1\endcsname}}}
\mdef\fahat{\hat{\fa}}
\def\frc#1/#2.{\frac{#1}{#2}}   
\mdef\ten{\mathrel{\otimes}}
\newcommand{\simRightarrow}{\xRightarrow{\raisebox{-3pt}[0pt][0pt]{\ensuremath{\sim}}}}
\newcommand*{\graysquare}{\textcolor{lightgray}{\blacksquare}}
\newcommand{\slashedrightarrow}{\mathrel {\mkern 5mu\vcenter {\hbox {$\shortmid $}}\mkern -8mu{\to }}}
\let\maps\colon
\newenvironment{proofsketch}{%
  \proof}{\endproof}
\def\defthm#1#2{%
  \newtheorem{#1}{#2}[section]%
  \expandafter\def\csname #1autorefname\endcsname{#2}%
  \expandafter\let\csname c@#1\endcsname\c@thm}
\newtheorem{thm}{Theorem}[section]
\theoremstyle{definition}
\theoremstyle{remark}
\mdef\fchk{\check{f}}
\definecolor{purple(x11)}{rgb}{0.5, 0.0, 0.5}
\newcommand{\N}{\mathbb{N}}
\newcommand{\R}{\mathbb{R}}
\newcommand{\Ob}{\mathrm{Ob}}
\newcommand{\Mor}{\mathrm{Mor}}
\newcommand{\ca}{\mathsf}
\newcommand{\Set}{\ca{Set}}
\newcommand{\Vect}{\ca{Vect}}
\newcommand{\Petri}{\ca{Petri}}
\newcommand{\wg}{\ca{wg}}
\newcommand{\Fin}{\ca{Fin}}
\newcommand{\Rel}{\ca{Rel}}
\newcommand{\Csp}{\ca{Csp}}
\newcommand{\CMC}{\ca{CMC}}
\newcommand{\SSMC}{\ca{SSMC}}
\newcommand{\Disc}{\ca{Disc}}
\newcommand{\ParaDynam}{\ca{ParaDynam}}
\newcommand{\A}{\mathsf{A}}
\newcommand{\C}{\mathsf{C}}
\newcommand{\D}{\mathsf{D}}
\newcommand{\X}{\mathsf{X}}
\newcommand{\bicat}{\mathbf}
\newcommand{\bD}{\bicat{D}}
\newcommand{\Cat}{\bicat{Cat}}
\newcommand{\Rex}{\bicat{Rex}}
\newcommand{\SMC}{\bicat{SymMonCat}}
\newcommand{\double}[1]{\mathbf{\mathbb #1}}
\newcommand{\lCsp}{\double{Csp}}
\newcommand{\lOpen}{\double{Open}}
\newcommand{\lRel}{\double{Rel}}
\newcommand{\lSemiAlgRel}{\double{SemiAlgRel}}
\newcommand{\lC}{\double{C}}
\newcommand{\lD}{\double{D}}
\newcommand{\lE}{\double{E}}
\newcommand{\lF}{\double{F}}
\newcommand{\define}[1]{{\bf \boldmath{#1}}}
\newcommand{\inta}{\raisebox{.3\depth}{$\smallint\hspace{-.01in}$}}
\newcommand{\op}{\mathrm{op}}
\title[Double Categories of Open Systems: the Cospan Approach]{Double Categories of Open Systems: \\ the Cospan Approach}
\author{John\ C.\ Baez}
\address{School of Mathematics, University of Edinburgh, Edinburgh UK, EH9 3FD}
\email{baez@math.ucr.edu}
\begin{document}

\begin{abstract}
This is an overview of double categories of `open systems': systems that can interact with their environment.   We focus on the variable sharing paradigm, where we compose open systems by identifying variables.   This paradigm is often implemented using structured or decorated cospans.   We explain this approach using three main examples: open Petri nets, open dynamical systems, and open Petri nets with rates.  We compare the virtues of structured and decorated cospan double categories, and study their common features.   We show that any symmetric monoidal structured or decorated cospan double category comes with maps from two simpler double categories: its `exoskeleton' and its `outer shell'.   Finally,  we study the concept of `hypergraph double category', a kind of double category that should subsume structured and decorated cospans in a common framework for studying open systems in the variable sharing paradigm.
\end{abstract}

\maketitle

\setcounter{tocdepth}{1}
\tableofcontents

\section{Introduction}
\label{sec:introduction}

When writing his foundational papers on double categories with Marco Grandis, Robert Par\'e never expected that their work would be applied to open systems: that is, systems that can interact with their environment.   Open systems are fundamental to modern technology, so they have been studied intensively, but still not as much as `closed' systems, where we neglect the system's interaction with its environment.    Some aspects of open systems have only recently been formalized.   One is the \emph{composition} of open systems---building larger systems out of smaller parts.   Another is \emph{functorial semantics} for open systems, where various kinds of diagrams are used to describe open systems and the maps between them.   It turns out that double categories are well suited to both these purposes!  

Diagrams of open systems, such as electrical circuit diagrams, can often be seen as loose morphisms in some double category $\lD$.   Maps between diagrams are then 2-cells in $\lD$.   This double category $\lD$ serves as a \emph{syntax}.   A double functor $F$ from $\lD$ to some other double category $\lE$ then provides a \emph{semantics} that says what the diagrams `mean'.    Loose morphisms in $\lE$ are typically systems of equations of some kind---algebraic equations, differential equations, etc.---with certain variables singled out that can interact with the outside world.    

A number of developments were required to enable the double categorical approach to open physical systems.   One key step was Grandis and Par\'e's 1999 paper \cite{GP1} introducing double categories where composition is strictly associative in one direction (now called the `tight' direction) and weakly associative in the other (now called the `loose' direction).   By now this concept is the default notion of double category---and the only one we use here. 

Another important step was the rise of double categories where a tight morphism $f \maps x \to y$ can be turned into a loose morphism in two ways, its `companion' $f_\ast \maps x \slashedrightarrow y$ and its `conjoint' $f^\ast \maps y \slashedrightarrow x$.   While these have their roots in Wood's `proarrow equipments' \cite{Wood1,Wood2}, the companion and conjoint have universal properties that Par\'e and others elegantly stated in double categorical language \cite{DPP2010,GP2}.   Around 2007, Shulman \cite{Shulman2008} studied double categories with companions and conjoints under the name `framed bicategories'.   In 2010 he gave them another name: `fibrant double categories' \cite{Shulman2010}, because they are the same as double categories $\lD$ where the source and target maps going from the category $\lD_1$ of loose morphisms and 2-cells to the category  $\lD_0$  of objects and tight morphisms combine to give a functor $\lD_1 \to \lD_0 \times \lD_0$ that is a Grothendieck fibration.

A crucial step toward applying these ideas to open systems was the introduction of topological quantum field theories described as functors.  This expanded Lawvere's idea of functorial semantics \cite{Lawvere} in a radically non-cartesian direction, and it emphasized the important of compact closed categories, and higher analogues of these, for understanding physical systems.  For reviews of this line of work see \cite{BL,BS,Kock}.    

Another crucial influence was the work on spans and cospans initiated by Walters and collaborators, motivated in large part by computer science, especially transition systems.  This program was launched in the mid-1990s by Katis, Sabadini and Walters in three papers on spans \cite{KSW1,KSW2,KSW3}.  Later the dual cospan side was explored by Rosebrugh, Sabadini and Walters \cite{RSW1,RSW2,RSW3}.  Among these papers, \cite{RSW1} is especially important here because it described the universal property of the category of cospans of finite sets, and the category of open graphs, in terms of special commutative Frobenius monoids.  This idea was further studied by Lack \cite{Lack} and others \cite{BCR,CF}, and we shall see its influence here.

When these ideas came together, it was only a matter of time before double categories were used to study open systems.   But this general idea can be made more precise in numerous ways.   For example, there are various choices of what it means to compose open systems.   In the `input-output paradigm', when we compose two open systems, variables of one can affect variables of the other while not being directly affected by them.    In the `variable sharing paradigm',  we instead \emph{identify} variables of one system with variables of another.  

In recent years Libkind has formalized both these paradigms and begun to unify them \cite{Libkind}, Myers has studied the input-output paradigm using double categories \cite{Myers2021}, and together they have formalized the very concept of `paradigm' \cite{LM,MyersBook}.   To limit the scope of this paper, we only discuss the variable sharing paradigm, and emphasize the approach to this paradigm based on cospans.   We illustrate this using three examples of open systems: Petri nets, dynamical systems described by systems of first-order differential equations, and finally Petri nets with rates.   Thus, we omit or only briefly touch upon many other examples of the variable sharing paradigm, including:
\begin{itemize}
\item electrical circuits \cite{BC,BCR,BF},
\item Markov processes \cite{ASW1,ASW2,BC2018,BFP},
\item bond graphs \cite{Coya2017,CoyaThesis},
\item signal flow diagrams \cite{BE,BSZ,CoyaThesis,FRS},
\item stock-flow diagrams and system structure diagrams \cite{BLLOR,BLOP},
\item gene regulatory networks and causal loop diagrams \cite{AFKOPS,BCh,BLOP},
\item thermodynamics \cite{BLM,LynchThesis},
\item classical mechanics \cite{BWY,LynchThesis}.
\end{itemize}

In Section \ref{sec:open} we explain open systems and various ways to formalize them, leading up the double categorical approach.   In Section \ref{sec:structured_and_decorated} we introduce two ways to construct double categories of open systems: structured and decorated cospans.   We discuss the relative merits of these approaches, and raise some questions about the relation between them.    In Section \ref{sec:Petri} we illustrate structured cospan double categories and maps between them using the example of open Petri nets.   In Section \ref{sec:dynamical} we illustrate decorated cospan categories using the example of open dynamical systems.   We look at some applications to classical mechanics, and revisit the question of when to use structured and when to use decorated cospans.   In Section \ref{sec:Petri_with_rates}, we illustrate maps between decorated cospans double categories using the map sending open Petri nets with rates to open dynamical systems.   We also look at applications of structured and decorated cospans to software for modeling in the field of public health.   

In Section \ref{sec:variable_sharing}, we conclude with a deeper investigation of the variable sharing paradigm.  We find that some structures important in topological quantum field theory, such as commutative Frobenius monoids, also appear in the variable sharing paradigm.   Indeed, in any symmetric monoidal double category of structured or decorated cospans, every object is a \emph{categorified version} of a commutative Frobenius monoid.   This fact expresses the laws obeyed by the operations of bending, splitting and joining wires---where `wires' should be taken quite generally as routes along which variables can affect one another.   In topological quantum field theory, the same laws appear in the description of spacetime itself \cite{BL,KockBook}.

\subsection{Conventions}

In this paper, we use a sans-serif font like $\C$ for categories, boldface like $\mathbf{B}$ for bicategories or 2-categories, and blackboard bold like $\lD$ for double categories. For double categories with names having more than one letter, such as $\lCsp(\X)$, only the first letter is in blackboard bold.   We use $(\C,\otimes)$ to stand for a monoidal or perhaps symmetric monoidal category with $\otimes$ as its tensor product.

In this paper, `double category' means `pseudo double category' and `double functor' means `pseudo double functor'.  For details, see for example \cite[\S 7.1]{GP1} or \cite[A.2]{CourserThesis}.    Thus, a double category has objects, tight morphisms $f \maps x \to x'$ with a composition operation that is associative and unital, loose morphisms $F \maps x \slashedrightarrow y$ with a composition operation that is associative and unital only up to 2-isomorphisms that obey the pentagon and triangle identity, and 2-cells
\[
\scalebox{1.2}{
\begin{tikzpicture}[scale=1.2]
\node (A) at (0,0) {$x$};
\node (B) at (1,0) {$y$};
\node (A') at (0,-1) {$x'$};
\node (B') at (1,-1) {$y'$};
\path[->,font=\scriptsize,>=angle 90]
(A) edge [tick] node[above]{$F$} (B)
(A') edge [tick] node[below]{$G$} (B')
(A) edge node [left]{$f$} (A')
(B) edge node [right]{$g$} (B');
\end{tikzpicture}
}
\]
that compose horizontally and vertically, with the vertical (tight) composition being associative and unital, and the horizontal (loose) composition being associative and unital only up to 2-isomorphisms that obey the pentagon and triangle identity.   Most importantly, loose and tight composition of 2-cells obey the `interchange law' that makes a four-fold composite of 2-cells such as this well-defined:
\[
\scalebox{1.2}{
\begin{tikzpicture}[scale=1.2]
\node (A) at (0,0) {$x$};
\node (B) at (1,0) {$x'$};
\node (C) at (2,0) {$x''$};
\node (A') at (0,-1) {$y$};
\node (B') at (1,-1) {$y'$};
\node (C') at (2,-1) {$y''$};
\node (A'') at (0,-2) {$z$};
\node (B'') at (1,-2) {$z'$};
\node (C'') at (2,-2) {$z''$.};
\path[->,font=\scriptsize,>=angle 90]
(A) edge [tick] node[above]{$F$} (B)
(A') edge [tick] node[above]{$G$} (B')
(A'') edge [tick] node[below]{$H$} (B'')
(B) edge [tick] node[above]{$F'$} (C)
(B') edge [tick] node[above]{$G'$} (C')
(B'') edge [tick] node[below]{$H'$} (C'')
(A) edge node [left]{$f$} (A')
(A') edge node [left]{$g$} (A'')
(B) edge node [right]{$f'$} (B')
(B') edge node [right]{$g'$} (B'')
(C) edge node [right]{$f''$} (C')
(C') edge node [right]{$g''$} (C'');
\end{tikzpicture}
}
\]

\section{Open systems}
\label{sec:open}

An `open system' is a system that we treat as interacting with the outside world---as opposed to a `closed system', where interactions with the outside world are absent or at least neglected.  To a mathematician this may seem hopelessly vague, but physicists and engineers have various fairly precise frameworks for discussing `systems' of various kinds, and in many of these frameworks one can set up a distinction between open and closed systems.   For example,  in the realm of electrical circuits, a circuit with `terminals' or `ports' exposed to the outside world is an open system, while one without these is a closed system.

A striking fact is that so far, more mathematical labor has gone into proving results about \emph{closed} systems than open ones.  The main reason is that closed systems are easier to understand.    Thus, there is a lot of interesting new territory yet to be explored in the realm of open systems.   I believe category theory can greatly assist this exploration, and that it will also benefit from new ideas discovered in this exploration.   The reason is that category theory provides many tools for discussing open systems.  

We can combine two closed systems by `setting them side by side', letting each do its own thing, neither influencing nor being influenced by the other.   In this situation we can easily reduce the study of the composite system to the study of each separate part.   Open systems can be composed in more interesting ways, which create interactions between the two parts.   For example, we can take two electrical circuits and attach the terminals of one to the terminals of the other, forming a larger circuit.  More generally, we can think of any open system as having one or more `interfaces', through which it can interact with the outside world, and to compose open systems we attach them along an interface.   This is an idealization, but a useful one.  Given this, several ways of formalizing a chosen class of open systems spring to mind.    

One approach is to use a symmetric monoidal category where:

\begin{itemize}
\item an object is an `interface',
\item a morphism $F \maps x \to y$ is an `open system' with `left interface' $x$ and `right interface' $y$,
\item composing morphisms $F \maps x \to y$ and $G \maps y \to z$ is attaching the left interface of $F$ to the right interface of $G$, obtaining an open system $G \circ F \maps x \to z$,
\item tensoring two morphisms  $F \maps x \to y$ and $F' \maps x' \to y'$ is `setting two open systems side by side', giving an open system $F \otimes F' \maps x \otimes x' \to y \otimes y'$.
\end{itemize}

There are many questions that can be raised about this approach.   First, why should an open system have just two interfaces?   There is no reason it must.   Luckily, this can be addressed by the fact that we are working in a symmetric monoidal category: two interfaces $x$ and $x'$ sitting side by side can be reinterpreted as a single interface $x \otimes x'$.  Thus, an open system $f$ with any number of left interfaces $x_1, \dots, x_m$ and any number of right interfaces $y_1, \dots, y_n$ can be interpreted as a morphism
\[         f \maps x_1 \otimes \cdots \otimes x_m \to y_1 \otimes \cdots \otimes y_n  .\]
We can also think of this as having a single left interface $x = x_1 \otimes \cdots \otimes x_m$ and a single right interface $y =  y_1 \otimes \cdots \otimes y_n$.

Second, why should there be a distinction between left and right interfaces, and why can we only compose systems by attaching the left interface of one to the right interface of the other?     Again, we do not need to take this approach.   It is natural for systems with a well-defined `input' and `output'---but these are more common among engineered systems than naturally occuring ones.   The language of `input' and `output' suggests that when we compose open systems $f \maps x \to y$ and $g \maps y \to z$ to create an open system $g \circ f \maps x \to z$, the behavior of $f$ affects $g$ but not conversely.   This is a very useful simplifying assumption.   But in the natural world, this assumption is at best an approximation that holds under very limited circumstances.  
A widely applicable principle in physics, called the `principle of reciprocity', says that if one system can affect another, then the second system can also affect the first \cite{DF}.

In any event, we want a formalism that lets us study open systems that have a rigid distinction between input and output, but also those where the distinction between left and right interface is purely conventional.    We can study \emph{both} using symmetric monoidal categories.   When desired, we can eliminate the distinction between input and output in several ways.  One is to work with a symmetric monoidal dagger-category: here any morphism $f \maps x \to y$ gives a morphism $f^\dagger \maps y \to x$.   Another is to work with a compact closed category: here every object $x$ has a dual $x^\ast$, and any morphism $f \maps x \to y$ gives a morphism $f^\ast \maps y^\ast \to x^\ast$, as well as a morphism $I \to x^\ast \otimes y$ and a morphism $x \otimes y^\ast \to I$.   Another way is to do both, and work with a `dagger compact category'.   All these options, and others, have been extensively studied \cite{BS,HV,Selinger2010,Selinger2011}.

There are further features of the variable-sharing approach to open systems that call for a more flexible approach based on operads, particularly the operad of undirected wiring diagrams \cite{Spivak2017,Yau2018}.    Operads easily allow for more composition patterns more general than the `end-to-end' composition of morphisms in categories.  But an even more urgent issue appears as soon as we consider an example: say, electrical circuits.

For simplicity, consider electrical circuits made using only resistors.  We can describe such a circuit as a finite graph where each edge is labeled by a positive real number called its `resistance':
                \[
\scalebox{0.8}{
\begin{tikzpicture}
	\begin{pgfonlayer}{nodelayer}
			\node [contact] (n1) at (-2,0) {$\bullet$};
		\node [style = none] at (-2.1,0.3) {$ $};
		\node [contact] (n2) at (0,1) {$\bullet$};
		\node [style = none] at (0,1.3) {$ $};
		\node [contact] (n3) at (0,-1) {$\bullet$};
		\node [style = none] at (0.05,-1.3) {$ $};
		\node [contact] (n4) at (2,0.7) {$\bullet$};
		\node [style = none] at (2.1,1) {$ $};
  		\node [contact] (n5) at (2,-0.7) {$\bullet$};
		\node [style = none] at (2.15,-1) {$ $};

		\node [style = none] at (-1,1) {$3.9$};
		\node [style = none] at (-1,-1.1) {$2.7$};
		\node [style = none] at (1,1.3) {$4.5$};
		\node [style = none] at (1,-1.3) {$2.3$};
	        \node [style = none] at (0.3,0) {$1.3$};
                \node [style = none] at (1.5,0) {$1.2$};
                \node [style = none] at (2.5,0) {$5.1$};

	\end{pgfonlayer}
	\begin{pgfonlayer}{edgelayer}
		\draw [style=inarrow, bend left=20, looseness=1.00] (n1) to (n2);
		\draw [style=inarrow, bend right=20, looseness=1.00] (n1) to (n3);
		\draw [style=inarrow, bend left=20, looseness=1.00] (n2) to (n4);
  		\draw [style=inarrow, bend left=20, looseness=1.00] (n5) to (n3);
		\draw [style=inarrow] (n2) to (n3);
  		\draw [style=inarrow, bend left=20, looseness=1.00] (n4) to (n5);
  		\draw [style=inarrow, bend right=20, looseness=1.00] (n4) to (n5);
	\end{pgfonlayer}
\end{tikzpicture}
}
\]
(We apologize to electrical engineers for not decorating the edges with symbols indicating resistors.)   We can treat such a circuit as an \emph{open} system by choosing some nodes to be inputs and some to be outputs.   Even better, for mathematicians at least, is to choose two finite sets $X$ and $Y$  equipped with maps to the circuit's set of nodes:
                \[
\scalebox{0.8}{
\begin{tikzpicture}
	\begin{pgfonlayer}{nodelayer}
		\node [contact] (n1) at (-2,0) {$\bullet$};
		\node [style = none] at (-2.1,0.3) {$ $};
		\node [contact] (n2) at (0,1) {$\bullet$};
		\node [style = none] at (0.1,1.3) {$ $};
		\node [contact] (n3) at (0,-1) {$\bullet$};
		\node [style = none] at (0.05,-1.3) {$ $};
		\node [contact] (n4) at (2,0.7) {$\bullet$};
		\node [style = none] at (2.1,1) {$ $};
  		\node [contact] (n5) at (2,-0.7) {$\bullet$};
		\node [style = none] at (2.15,-1) {$ $};

		\node [style = none] at (-1,1) {$3.9$};
		\node [style = none] at (-1,-1.1) {$2.7$};
		\node [style = none] at (1,1.3) {$4.5$};
		\node [style = none] at (1,-1.3) {$2.3$};
	        \node [style = none] at (0.1,0) {$1.3$};
                \node [style = none] at (1.5,0) {$1.2$};
                \node [style = none] at (2.45,0) {$5.1$};
                
		\node [style=none] (1) at (-3,0) {1};
		\node [style=none] (2) at (3.2,1) {2};
  	        \node [style=none] (3) at (3.2,-1) {3};

		\node [style=none] (ATL) at (-3.4,1.4) {};
		\node [style=none] (ATR) at (-2.6,1.4) {};
		\node [style=none] (ABR) at (-2.6,-1.4) {};
		\node [style=none] (ABL) at (-3.4,-1.4) {};

		\node [style=none] (X) at (-3,1.8) {$X$};
		\node [style=inputdot] (inI) at (-2.8,0) {};

		\node [style=none] (Y) at (3.2,1.8) {$Y$};
	         \node [style=inputdot] (outI) at (3,1) {};
                \node [style=inputdot] (outI') at (3,-1) {};

		\node [style=none] (MTL) at (2.8,1.4) {};
		\node [style=none] (MTR) at (3.6,1.4) {};
		\node [style=none] (MBR) at (3.6,-1.4) {};
		\node [style=none] (MBL) at (2.8,-1.4) {};

	\end{pgfonlayer}
	\begin{pgfonlayer}{edgelayer}
		\draw [style=inarrow, bend left=20, looseness=1.00] (n1) to (n2);
		\draw [style=inarrow, bend right=20, looseness=1.00] (n1) to (n3);
		\draw [style=inarrow, bend left=20, looseness=1.00] (n2) to (n4);
  		\draw [style=inarrow, bend left=20, looseness=1.00] (n5) to (n3);
		\draw [style=inarrow] (n2) to (n3);
  		\draw [style=inarrow, bend left=20, looseness=1.00] (n4) to (n5);
  		\draw [style=inarrow, bend right=20, looseness=1.00] (n4) to (n5);
  		
  		\draw [style=simple] (ATL.center) to (ATR.center);
		\draw [style=simple] (ATR.center) to (ABR.center);
		\draw [style=simple] (ABR.center) to (ABL.center);
		\draw [style=simple] (ABL.center) to (ATL.center);

		\draw [style=simple] (MTL.center) to (MTR.center);
		\draw [style=simple] (MTR.center) to (MBR.center);
		\draw [style=simple] (MBR.center) to (MBL.center);
		\draw [style=simple] (MBL.center) to (MTL.center);

		\draw [style=inputarrow] (inI) to (n1);
		\draw [style=inputarrow] (outI) to (n4);
  		\draw [style=inputarrow] (outI') to (n5);
	\end{pgfonlayer}
\end{tikzpicture}
}
\]
We can think of this as a morphism from $X$ to $Y$.   We can compose this with a morphism from $Y$ to $Z$:
\[
\scalebox{0.8}{
\begin{tikzpicture}
	\begin{pgfonlayer}{nodelayer}
 
		\node [contact] (n6) at (0,0.7) {$\bullet$};
		\node [style = none] at (0,1.3) {$ $};
		\node [contact] (n7) at (0,-0.7) {$\bullet$};
		\node [style = none] at (0.05,-1.3) {$ $};
		\node [contact] (n8) at (2,0) {$\bullet$};
		\node [style = none] at (2.1,0.4) {$ $};

		\node [style = none] at (1,0.9) {$3.1$};
		\node [style = none] at (1,-0.9) {$9.2$};

		\node [style=none] (2) at (-1,1) {2};
  	         \node [style=none] (3) at (-1,-1) {3};
           	\node [style=none] (4) at (3,0) {4};

		\node [style=none] (ATL) at (-1.4,1.4) {};
		\node [style=none] (ATR) at (-0.6,1.4) {};
		\node [style=none] (ABR) at (-0.6,-1.4) {};
		\node [style=none] (ABL) at (-1.4,-1.4) {};

		\node [style=none] (Y) at (-1,1.8) {$Y$};
  	        \node [style=inputdot] (inI) at (-0.8,1) {};
                \node [style=inputdot] (inI') at (-0.8,-1) {};

		\node [style=none] (Z) at (3,1.8) {$Z$};
  		\node [style=inputdot] (outI) at (2.8,0) {};

		\node [style=none] (MTL) at (2.6,1.4) {};
		\node [style=none] (MTR) at (3.4,1.4) {};
		\node [style=none] (MBR) at (3.4,-1.4) {};
		\node [style=none] (MBL) at (2.6,-1.4) {};

	\end{pgfonlayer}
	\begin{pgfonlayer}{edgelayer}

		\draw [style=inarrow, bend left=20, looseness=1.00] (n6) to (n8);
  		\draw [style=inarrow, bend right=20, looseness=1.00] (n7) to (n8);

		\draw [style=simple] (ATL.center) to (ATR.center);
		\draw [style=simple] (ATR.center) to (ABR.center);
		\draw [style=simple] (ABR.center) to (ABL.center);
		\draw [style=simple] (ABL.center) to (ATL.center);

		\draw [style=simple] (MTL.center) to (MTR.center);
		\draw [style=simple] (MTR.center) to (MBR.center);
		\draw [style=simple] (MBR.center) to (MBL.center);
		\draw [style=simple] (MBL.center) to (MTL.center);

		\draw [style=inputarrow] (inI) to (n6);
		\draw [style=inputarrow] (inI') to (n7);
  		\draw [style=inputarrow] (outI) to (n8);

	\end{pgfonlayer}
\end{tikzpicture}
}
\]
and the result is this:
\[
\scalebox{0.8}{
\begin{tikzpicture}
	\begin{pgfonlayer}{nodelayer}
		\node [contact] (n1) at (-2,0) {$\bullet$};
		\node [style = none] at (-2.1,0.3) {$ $};
		\node [contact] (n2) at (0,1) {$\bullet$};
		\node [style = none] at (0,1.3) {$ $};
		\node [contact] (n3) at (0,-1) {$\bullet$};
		\node [style = none] at (0.05,-1.3) {$ $};
		\node [contact] (n4) at (2,0.7) {$\bullet$};
		\node [style = none] at (2.1,1) {$ $};
  		\node [contact] (n5) at (2,-0.7) {$\bullet$};
		\node [style = none] at (2.15,-1) {$ $};;
		\node [contact] (n8) at (4,0) {$\bullet$};
		\node [style = none] at (4.1,0.4) {$$};

		\node [style = none] at (-1,1) {$3.9$};
		\node [style = none] at (-1,-1.1) {$2.7$};
		\node [style = none] at (1,1.3) {$4.5$};
		\node [style = none] at (1,-1.3) {$2$};
	         \node [style = none] at (0.3,0) {$1.3$};
                 \node [style = none] at (1.5,0) {$1.2$};
                 \node [style = none] at (2.5,0) {$5.1$};
                 	\node [style = none] at (3,0.9) {$3.1$};
		\node [style = none] at (3,-0.9) {$9.2$};

		\node [style=none] (X) at (-3,1.8) {$X$};
	         \node [style=none] (1) at (-3,0) {1};
          	\node [style=inputdot] (inI) at (-2.8,0) {};
      
     	        \node [style=none] (ATL) at (-3.4,1.4) {};
		\node [style=none] (ATR) at (-2.6,1.4) {};
		\node [style=none] (ABR) at (-2.6,-1.4) {};
		\node [style=none] (ABL) at (-3.4,-1.4) {};

		\node [style=none] (Z) at (5,1.8) {$Z$};
  		\node [style=inputdot] (outI) at (4.8,0) {};
                 \node [style=none] (4) at (5,0) {4};
     
  		\node [style=none] (MTL) at (4.6,1.4) {};
		\node [style=none] (MTR) at (5.4,1.4) {};
		\node [style=none] (MBR) at (5.4,-1.4) {};
		\node [style=none] (MBL) at (4.6,-1.4) {};

	\end{pgfonlayer}
	\begin{pgfonlayer}{edgelayer}
		\draw [style=inarrow, bend left=20, looseness=1.00] (n1) to (n2);
		\draw [style=inarrow, bend right=20, looseness=1.00] (n1) to (n3);
		\draw [style=inarrow, bend left=20, looseness=1.00] (n2) to (n4);
  		\draw [style=inarrow, bend left=20, looseness=1.00] (n5) to (n3);
		\draw [style=inarrow] (n2) to (n3);
  		 \draw [style=inarrow, bend left=20, looseness=1.00] (n4) to (n5);
  		\draw [style=inarrow, bend right=20, looseness=1.00] (n4) to (n5);
          	\draw [style=inarrow, bend left=20, looseness=1.00] (n4) to (n8);
      	        \draw [style=inarrow, bend right=20, looseness=1.00] (n5) to (n8);

		\draw [style=simple] (ATL.center) to (ATR.center);
		\draw [style=simple] (ATR.center) to (ABR.center);
		\draw [style=simple] (ABR.center) to (ABL.center);
		\draw [style=simple] (ABL.center) to (ATL.center);

		\draw [style=inputarrow] (inI) to (n1);
  		\draw [style=inputarrow] (outI) to (n8);
  
		\draw [style=simple] (MTL.center) to (MTR.center);
		\draw [style=simple] (MTR.center) to (MBR.center);
		\draw [style=simple] (MBR.center) to (MBL.center);
		\draw [style=simple] (MBL.center) to (MTL.center);

	\end{pgfonlayer}
\end{tikzpicture}
}
\]
Nothing profound is going on here: we are simply gluing together two open electrical circuits to get a new one.   But there is something worth noting.   We form the underlying graph of the new circuit by taking a pushout in the category of graphs.   But since pushout is defined only up to canonical isomorphism, this means composition of circuits will not be strictly associative, only associative up to isomorphism.   

There are at least three responses to this problem.  One is to carefully choose the pushouts to make composition  associative.   In the electrical circuits example, it is sufficient to make the disjoint union of finite sets into an associative operation---for example, to replace the category of finite sets with coproduct as its monoidal structure with an equivalent strict monoidal category \cite[\S XI.3]{MacLane}.   But the philosophy of modern category theory is generally to avoid such strictification.

Another approach is to take isomorphism classes of open circuits as morphisms.  This gives a category, but unfortunately one cannot point to a specific node or edge in an \textsl{isomorphism class} of graphs: for that, one needs an actual graph.  

A third approach is to work with arbitrary pushouts, to accept that this makes composition of open circuits associative only up to isomorphism, and instead of seeking a category of open circuits, work with a bicategory---or better yet, a double category!  And this turns out to have great advantages.

In fact Courser \cite{Courser} had been trying to construct symmetric monoidal bicategories of open systems when he found that the coherence laws for symmetric monoidal bicategories \cite{GPS,McCrudden,Stay} are complicated and tiring to check directly.   He realized that the easiest approach was to use a result of Shulman \cite{Shulman2010}: any fibrant symmetric monoidal double category gives a symmetric monoidal bicategory.    Later it became clear that double categories are closer to what we actually want, since in addition to having open systems as loose morphisms between interfaces, which compose in a weakly associative manner, they have tight morphisms as \emph{maps} between interfaces, which compose in a strictly associative manner.  Most importantly, 2-cells in a double category allow us to describe maps between open systems.

In this approach, we describe open systems of a given kind using a symmetric monoidal double category where:
\begin{itemize}
\item An object is an `interface'.
\item A loose morphism $F \maps x \slashedrightarrow y$ is an `open system' with `input interface' $x$ and `output interface' $y$.
\item Composing loose morphisms $F \maps x \slashedrightarrow y$ and $G \maps y \slashedrightarrow z$ is attaching the output interface of $F$ to the input interface of $G$, obtaining an open system $G \circ F \maps x \to z$.
\item A tight morphism $f \maps x \to y$ is a `map between interfaces'.
\item Composing tight morphisms is composing maps between interfaces.
\item A 2-cell
\[
\scalebox{1.2}{
\begin{tikzpicture}[scale=1.2]
\node (A) at (0,0) {$x$};
\node (B) at (1,0) {$y$};
\node (A') at (0,-1) {$x'$};
\node (B') at (1,-1) {$y'$};
\path[->,font=\scriptsize,>=angle 90]
(A) edge [tick] node[above]{$F$} (B)
(A') edge [tick] node[below]{$G$} (B')
(A) edge node [left]{$f$} (A')
(B) edge node [right]{$g$} (B');
\end{tikzpicture}
}
\]
is a `map between open systems' from $F$ to $G$.
\item  Tensoring two loose morphisms $F \maps x \slashedrightarrow y$ and $F' \maps x' \slashedrightarrow y'$ is `setting two open systems side by side', giving an open system $F \otimes F' \maps x \otimes x' \slashedrightarrow y \otimes y'$.
\item Tensoring two tight morphisms is `setting two maps of interfaces side by side'.
\item  Tensoring two 2-cells is `setting two maps of open systems side by side'.
\end{itemize}

Next we describe two methods of defining such double categories: structured and decorated cospans.   It will often be convenient to state theorems in two parts: first one for double categories, then one for symmetric monoidal double categories.  In practice, the symmetric monoidal version is always the more useful one---but it relies on the simpler version that ignores the symmetric monoidal structure.

\section{Structured and decorated cospans}
\label{sec:structured_and_decorated}

Experience has shown that many open systems are nicely modeled using cospans \cite{CourserThesis, FongThesis, PollardThesis}. A cospan in a category $\A$ is a diagram of this form:
\[
\scalebox{1.2}{
\begin{tikzpicture}[scale=1]
\node (A) at (0,0) {$a$};
\node (B) at (1,1) {$m$};
\node (C) at (2,0) {$b$};
\path[->,font=\scriptsize,>=angle 90]
(A) edge node[above, pos=0.3]{$i$} (B)
(C) edge node[above, pos=0.3]{$o$} (B);
\end{tikzpicture}
}
\]
We call $m$ the \define{apex}, $a$ and $b$ the \define{feet}, and $i$ and $o$ the \define{legs} of the cospan.   The apex is the system itself.  The feet are `interfaces'  through which the system can interact with the outside world.  The legs say how the interfaces are included in the system.    If the category $\A$ has pushouts, we can compose cospans in $\A$: this describes the operation of attaching two open systems together in series by identifying one interface of the first with one of the second:
\[
\scalebox{1.2}{
\begin{tikzpicture}
\node (A) at (0,0) {$a$};
\node (B) at (1,1) {$m$};
\node (C) at (2,0) {$b$};
\node (B') at (3,1) {$m'$};
\node (C') at (4,0) {$c$.};
\node (D) at (2,2) {$m+_b m'$};
\node (push) at (2,1.4) {\rotatebox{135}{$\ulcorner$}};
\path[->,font=\scriptsize,>=angle 90]
(A) edge node[above, pos = 0.25]{$i$} (B)
(C) edge node[above, pos = 0.25]{$o$} (B)
(C) edge node[above, pos = 0.25]{$i'$} (B')
(C') edge node[above, pos = 0.25]{$o'$} (B')
(B) edge node[above, pos = 0.25]{} (D)
(B') edge node[above, pos = 0.25]{} (D);
\end{tikzpicture}
}
\]
If $\A$ also has coproducts, we can also `tensor' cospans: this describes setting open systems side by side, in parallel:
\[
\scalebox{1.2}{
\begin{tikzpicture}[scale=1]
\node (A) at (0,0) {$a+a'$};
\node (B) at (1,1) {$m+m'$};
\node (C) at (2,0) {$b+b'$.};
\path[->,font=\scriptsize,>=angle 90]
(A) edge node[left, pos=0.7]{$i+i'$} (B)
(C) edge node[right, pos=0.7]{$o+o'$} (B);
\end{tikzpicture}
}
\]
However, we often want the system itself to have more structure than its interfaces.   We know of two main ways to do this: structured and decorated cospans.  

\subsection{Structured cospans}

Given a functor $L \maps \A \to \X$, an $L$-\define{structured cospan} is a cospan in $\X$ whose feet come from a specified pair of objects in $a, b \in \A$:
\[
\scalebox{1.2}{
\begin{tikzpicture}[scale=1.0]
\node (A) at (0,0) {$L(a)$};
\node (B) at (1,1) {$x$};
\node (C) at (2,0) {$L(b).$};
\path[->,font=\scriptsize,>=angle 90]
(A) edge node[above,pos=0.3]{$i$} (B)
(C)edge node[above,pos=0.3]{$o$}(B);
\end{tikzpicture}
}
\]
Very often $\A$ is some category of objects with `less structure', like finite sets, while $\X$ is some category of objects with `more structure', like finite graphs, and $L \maps \A \to \X$ is left adjoint to some functor $R \maps \X \to \A$ that forgets the extra structure.   Thus we often think of an $L$-structured cospan as consisting of a `system' $x \in \X$ and two `interfaces' $a, b \in \A$ that have less structure than the system, mapped into the system via $i$ and $o$.  

When $L$ is left adjoint to some functor $R$, as is often the case, an $L$-structured  cospan is equivalent to a diagram
\[
\scalebox{1.2}{
\begin{tikzpicture}[scale=1.0]
\node (A) at (0,0) {$a$};
\node (B) at (1,1) {$R(x)$};
\node (C) at (2,0) {$b$};
\path[->,font=\scriptsize,>=angle 90]
(A) edge node[above,pos=0.3]{$i$} (B)
(C)edge node[above,pos=0.3]{$o$}(B);
\end{tikzpicture}
}
\]
together with a specific choice of $x \in \X$, and this is often a more intuitive way to think about it.  However, the description using $L$ works better technically, since it lets us compose structured cospans as long as $\X$ has pushouts.

\begin{thm}\label{thm:structured_cospan_1}
Let $L \maps \A \to \X$ be a functor and let $\X$ be a category with pushouts.  Then there is a fibrant double category $_L\lCsp$, the \define{double category of} $L$-\define{structured cospans}, in which
\begin{itemize}
\item an object is an object of $\A$,
\item a tight morphism is a morphism of $\A$,
\item a loose morphism from $a$ to $b$ is an $L$-structured cospan
\[
\scalebox{1.2}{
\begin{tikzpicture}[scale=1.0]
\node (A) at (0,0) {$L(a)$};
\node (B) at (1,1) {$x$};
\node (C) at (2,0) {$L(b).$};
\path[->,font=\scriptsize,>=angle 90]
(A) edge node[above,pos=0.3]{$i$} (B)
(C)edge node[above,pos=0.3]{$o$}(B);
\end{tikzpicture}
}
\]
\item a 2-cell is a \define{map of} $L$-\define{structured cospans}, that is, a commutative diagram in $\X$ of the form
\[
\scalebox{1.2}{
\begin{tikzpicture}[scale=1.2]
\node (A) at (0,0) {$L(a)$};
\node (B) at (1,0) {$x$};
\node (C) at (2,0) {$L(b)$};
\node (A') at (0,-1) {$L(a')$};
\node (B') at (1,-1) {$x'$};
\node (C') at (2,-1) {$L(b')$.};
\path[->,font=\scriptsize,>=angle 90]
(A) edge node[above]{$i$} (B)
(C) edge node[above]{$o$} (B)
(A') edge node[below]{$i'$} (B')
(C') edge node[below]{$o'$} (B')
(A) edge node [left]{$L(f)$} (A')
(B) edge node [left]{$\alpha$} (B')
(C) edge node [right]{$L(g)$} (C');
\end{tikzpicture}
}
\]
\end{itemize}
Composition of tight morphisms is composition in $\A$.
Composition of loose morphisms and 2-cells is done using pushouts in $\X$. 
\end{thm}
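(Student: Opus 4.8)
The plan is to construct $_L\lCsp$ concretely as a pseudo double category and then verify it is fibrant. First I would set $(_L\lCsp)_0 = \A$, so objects and tight morphisms with their strictly associative composition come for free. The category of loose morphisms and 2-cells, $(_L\lCsp)_1$, has as objects the $L$-structured cospans and as morphisms the maps of $L$-structured cospans as displayed; composition of such maps is just pasting commutative diagrams in $\X$, so $(_L\lCsp)_1$ is a category immediately. The source and target functors $(_L\lCsp)_1 \to \A$ read off $a$ and $b$ (and $f$, $g$) from a structured cospan and a map between them. The loose identity on $a$ is the cospan $L(a) \xrightarrow{\id} L(a) \xleftarrow{\id} L(a)$, and its functoriality in $a$ is clear since $L$ is a functor.

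**Loose composition via pushouts.**

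The substantive point is loose composition. Given structured cospans $L(a)\to x\leftarrow L(b)$ and $L(b)\to y\leftarrow L(c)$, I would pick, once and for all, a pushout $x +_{L(b)} y$ in $\X$ (this uses the hypothesis that $\X$ has pushouts), and declare the composite to be $L(a)\to x+_{L(b)} y \leftarrow L(c)$ with the evident legs. On 2-cells, a pair of compatible maps of structured cospans induces a unique map between the chosen pushouts by the universal property, and this assignment is functorial in the pair — so composition is a functor $(_L\lCsp)_1 \times_{\A} (_L\lCsp)_1 \to (_L\lCsp)_1$. The associator and unitors are the canonical comparison isomorphisms between different iterated pushouts; their naturality and the pentagon and triangle identities follow from the uniqueness clauses in the universal properties of pushouts, exactly as in the standard verification that $\Csp(\X)$ is a bicategory. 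I would remark that this is routine and cite the cospan bicategory/double category literature (e.g. \cite{Courser, CourserThesis, FongThesis}) rather than grinding through the coherence diagrams.

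**Fibrancy.**

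Finally I would check fibrancy: every tight morphism $f\maps a\to b$ must have a companion and a conjoint. The companion $f_\ast$ is the structured cospan $L(a)\xrightarrow{L(f)} L(b)\xleftarrow{\id} L(b)$, and the conjoint $f^\ast$ is $L(b)\xrightarrow{\id} L(b)\xleftarrow{L(f)} L(a)$; the two required 2-cells exhibiting the companion are built from $L(f)$ and identities, and the binding cells compose to identities because the relevant squares commute on the nose. One can phrase this uniformly: the functor $(_L\lCsp)_1 \to \A\times\A$ sending a structured cospan to its pair of feet is a Grothendieck fibration (indeed an opfibration too), since a cartesian lift of $(f,g)\maps (a',b')\to(a,b)$ over $L(a)\to x\leftarrow L(b)$ is given by pulling back $x$ — but in fact the cleaner route here is to pull the structured cospan back along $L(f)$ and $L(g)$ only at the level of the feet, which just amounts to precomposing the legs, requiring no limits in $\X$ at all. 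I would spell out the two binding 2-cells and invoke the characterization of fibrant double categories from \cite{Shulman2010} to conclude.

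**Expected main obstacle.**

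The main obstacle is purely bookkeeping: assembling the coherence isomorphisms for loose composition and checking pentagon/triangle. There is no conceptual difficulty — it is the usual fact that a category with pushouts yields a cospan bicategory, here packaged as the loose part of a double category — so in the write-up I would state the construction, indicate why each universal property gives what is needed, and defer the diagram-chase to the literature.
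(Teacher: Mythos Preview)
Your proposal is correct and is essentially the approach taken in the references the paper cites; the paper itself does not give a proof but simply refers to \cite[Thm.\ 2.3]{BC}, \cite[Thm.\ 3.2.1]{CourserThesis}, and \cite[Prop.\ 2.2]{Patterson2023} for fibrancy. Your sketch---building $(_L\lCsp)_1$ explicitly, defining loose composition by chosen pushouts with coherence coming from the universal property, and exhibiting companions $L(a)\xrightarrow{L(f)}L(b)\xleftarrow{\id}L(b)$ and conjoints $L(b)\xrightarrow{\id}L(b)\xleftarrow{L(f)}L(a)$---is exactly what those references do, so there is nothing to add.
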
 

\begin{proof}
See \cite[Thm.\ 2.3]{BC} or \cite[Thm.\ 3.2.1]{CourserThesis}, and \cite[Prop.\ 2.2]{Patterson2023} for the fibrancy.
\end{proof}

In practice we almost always seem to work with symmetric monoidal double categories of structured cospans---and in fact, cocartesian ones.    Dualizing Aleiferi's work on cartesian double categories \cite{AleiferiThesis}, we define a \define{cocartesian} double category $\lD$ to be a cocartesian object in the 2-category of double categories (with double functors as morphisms, as usual meaning pseudo double functors). This implies that the category $\lD_0$ of objects and tight morphisms and the category $\lD_1$ of loose morphisms and 2-cells are categories with finite coproducts, and the source, target, composition, and identity-assigning maps preserve these.

\begin{thm}
\label{thm:structured_cospan_2}
Let $L \maps \A \to \X$ be a functor preserving finite coproducts, where $\A$ has finite coproducts and $\X$ has finite colimits.  Then $_L\lCsp$ naturally has the structure of a cocartesian double category.  It also becomes a symmetric monoidal double category, where the symmetric monoidal structure is defined using finite coproducts in $\A$ and $\X$.
\end{thm}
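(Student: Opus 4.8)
The plan is to deduce this from Theorem~\ref{thm:structured_cospan_1} together with the dual of Aleiferi's characterization of cartesian double categories~\cite{AleiferiThesis}. As noted just before the statement, a cocartesian double category $\lD$ must have $\lD_0$ and $\lD_1$ cocartesian, with the source, target, composition and identity-assigning functors preserving finite coproducts; dualizing Aleiferi's characterization theorem, these conditions are also \emph{sufficient}, and they determine the cocartesian structure, so there is nothing arbitrary in saying ${}_L\lCsp$ ``naturally'' carries one. Thus the task reduces to (i) exhibiting finite coproducts in the two constituent categories of ${}_L\lCsp$ and (ii) checking that the four structure functors respect them. For the symmetric monoidal statement one then observes that a cocartesian double category is in particular symmetric monoidal with tensor the coproduct, and that the coproducts appearing here are exactly the finite coproducts of $\A$ and of $\X$; alternatively, and more concretely, this is the symmetric monoidal structure already built in~\cite[Thm.~2.3]{BC} and~\cite[Thm.~3.2.1]{CourserThesis}.

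First I would identify the coproducts. The category $({}_L\lCsp)_0$ is $\A$, which has finite coproducts by hypothesis. The category $({}_L\lCsp)_1$ has $L$-structured cospans as objects and maps of such as morphisms, and I claim its finite coproducts are computed ``legwise and apexwise'': the coproduct of $L(a)\xrightarrow{i}x\xleftarrow{o}L(b)$ and $L(a')\xrightarrow{i'}x'\xleftarrow{o'}L(b')$ is
\[
L(a+a')\;\xrightarrow{\,i+i'\,}\;(x+x')\;\xleftarrow{\,o+o'\,}\;L(b+b'),
\]
where we use the canonical isomorphisms $L(a)+L(a')\iso L(a+a')$ and $L(b)+L(b')\iso L(b+b')$ furnished by the hypothesis that $L$ preserves finite coproducts; the initial object is $L(0)\to 0\leftarrow L(0)$, with $L(0)\iso 0$ since $L$ preserves the empty coproduct. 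The universal property is immediate, since a map of structured cospans is a triple (on left foot, apex, right foot) subject to commutativity and coproducts are formed slotwise in $\A$ and $\X$.

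Next I would check the structure functors. The source and target functors send the coproduct above to $a+a'$ and $b+b'$ respectively, so they preserve finite coproducts on the nose (modulo the coherence isomorphisms already in play). The identity-assigning functor $\A\to({}_L\lCsp)_1$ sends $a$ to the identity cospan on $L(a)$, and preserves finite coproducts because $L$ does. Loose composition is defined by pushout in $\X$ (Theorem~\ref{thm:structured_cospan_1}); since a coproduct of pushout squares is again a pushout square, composing the coproduct of two composable pairs yields, up to canonical isomorphism, the coproduct of the two composites, so composition preserves finite coproducts. With all four functors verified, the dual Aleiferi criterion gives the cocartesian structure on ${}_L\lCsp$; passing to the underlying monoidal structure yields the symmetric monoidal double category with $\otimes=+$ claimed in the second sentence.

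The main obstacle I anticipate is bookkeeping rather than genuine mathematics: ensuring the componentwise coproducts in $({}_L\lCsp)_1$ cohere \emph{strictly enough} with those in $\A$ to match the definition of a cocartesian object in $\Dbl$, and keeping track of the fact that loose composition is only pseudofunctorial, so that ``composition preserves coproducts'' has to be stated and used up to the structural isomorphisms of ${}_L\lCsp$. If one prefers to avoid invoking the dual of Aleiferi's theorem, the alternative is to construct the symmetric monoidal double category by hand following the criteria for (symmetric) monoidal double categories in~\cite{Shulman2010} and~\cite[Thm.~2.3]{BC}: set $\otimes=+$ on objects, tight morphisms, loose morphisms and 2-cells, obtain the associator, unitors and braiding from the universal comparison maps between finite coproducts, and verify the coherence axioms---each an equality of morphisms out of a finite coproduct, hence forced by its universal property---which is lengthy but entirely routine.
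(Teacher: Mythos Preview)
Your proposal is correct and in fact supplies more detail than the paper, whose proof consists solely of citations to \cite[Thm.~3.2.3]{CourserThesis}, \cite[Thm.~3.9]{BC}, \cite[Thm.~3.2.1]{BCV}, and \cite[Thm.~2.3]{Patterson2023}. The route you take---verifying that $({}_L\lCsp)_0$ and $({}_L\lCsp)_1$ have componentwise finite coproducts and that the source, target, identity and composition functors preserve them, then invoking the dual of Aleiferi's characterization---is exactly what Patterson does in the last-cited reference, while your alternative of building the symmetric monoidal structure by hand is what Baez--Courser and Courser's thesis do; so both branches of your sketch match the literature the paper defers to.
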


\begin{proof} 
This is a combination of \cite[Thm.\ 3.2.3]{CourserThesis} , \cite[Thm.\ 3.9]{BC}, \cite[Thm.\ 3.2.1]{BCV},  and \cite[Thm.\ 2.3]{Patterson2023}.   
\end{proof}

It is worth saying a bit about the hypotheses in Theorem \ref{thm:structured_cospan_2}.  They have been pared down to almost the minimum necessary.  In practice $L \maps \A \to \X$ is often a left adjoint functor between categories with finite colimits,  which is more than enough for the theorem to apply.   However, in one application \cite[Thm.\ 7.6]{BCh} it seemed necessary to pare down the hypotheses even more.  To get the conclusions of Theorem \ref{thm:structured_cospan_2}, we do not need $\X$ to have all finite colimits.   It is enough that it have finite coproducts and have pushouts for diagrams of this form:
\[   
\scalebox{1.2}{
\begin{tikzpicture}[scale=1.0]
\node (A) at (0,1) {$x$};
\node (B) at (1,0) {$L(a)$};
\node (C) at (2,1) {$y$};
\path[->,font=\scriptsize,>=angle 90]
(B) edge node[above]{$$} (A)
(B) edge node[above]{$$}(C);
\end{tikzpicture}
}
\]
since these are the only pushouts required to compose structured cospans.

Finally, a word about history.  Structured cospans were first discovered in 2007 by Fiadeiro and Schmitt \cite{FiadeiroSchmitt}.  In 2015 Par\'e gave a talk about the dual concept, structured spans, which he called `superspans' \cite{Pare}.   Completely ignorant of these earlier developments, Courser and the author \cite{BC} reinvented structured cospans around 2020, even giving them the same name that Fiaideiro and Schmitt used.    This is an example of how a mathematical concept can be invented repeatedly, but only catch on when its applications become clear.

\subsection{Decorated cospans}

Structured cospans handle a large class of cospans where the apex has more structure than the feet---but apparently not all.   As mentioned, structured cospans assume we fix a category $\A$ of `interfaces' and a category $\X$ of `systems', together with a functor $L \maps \A \to \X$ that gives a standard way to turn an interface into a system.   To get a double category of structured cospans, we need $\X$ to have pushouts.  But we shall see examples where the category of systems does not have pushouts.

We can get around this problem using decorated cospans \cite{BCV,Fong}.    Here we start with a category $\A$ with finite colimits and a pseudofunctor $F \maps \A \to \Cat$.  We again think of objects of $\A$ as `interfaces'.   But now, for each interface $m \in \A$, we have a category $F(m)$ of ways to equip it with extra data making it into a system.     We call a choice of this extra data $d \in F(m)$ a \define{decoration} of $m$.    

In this approach we define an open system to be an $F$-\define{decorated cospan}: a cospan in $\A$ together with a decoration of its apex.  We write an $F$-decorated cospan in this way:
\[
\scalebox{1.2}{
\begin{tikzpicture}[column sep={.4in,between origins}, row sep=.08in]
\node (A) at (0,0) {$a$};
\node (B) at (1,1) {$m$};
\node (C) at (2,0) {$b$};
\node (E) at (4,1) {$d \in F(m)$.};
\path[->,font=\scriptsize,>=angle 90]
(A) edge node[above,pos=0.3]{$i$} (B)
(C) edge node[above,pos=0.3]{$o$} (B);
\end{tikzpicture}
}
\]
To compose decorated cospans, we need to equip $F$ with the structure of a lax monoidal pseudofunctor from $(\A, +)$ to $(\Cat, \times)$.  The key ingredient here beyond $F$ itself is the `laxator', which gives for each pair of objects $m,m' \in \A$ a functor
\[   \phi_{m.m'} \maps F(m) \times F(m') \to F(m+m') . \] 
Given a composable pair of $F$-decorated cospans
\[
\scalebox{1.2}{
\begin{tikzpicture}
\node (A) at (0,0) {$a$};
\node (B) at (1,1) {$m$};
\node (C) at (2,0) {$b$};
\node (D) at (3,1) {$d \in F(m)$};
\node (A') at (5,0) {$b$};
\node (B') at (6,1) {$m'$};
\node (C') at (7,0) {$c$};
\node (D') at (8,1) {$d' \in F(m')$};
\path[->,font=\scriptsize,>=angle 90]
(A) edge node[above, pos = 0.25]{$i$} (B)
(C) edge node[above, pos = 0.25]{$o$} (B)
(A') edge node[above, pos = 0.25]{$i'$} (B')
(C') edge node[above, pos = 0.25]{$o'$} (B');
\end{tikzpicture}
}
\]
we define their composite to be 
\[
\scalebox{1.2}{
\begin{tikzpicture}
\node (A) at (0,0) {$a$};
\node (B) at (1,1) {$m$};
\node (C) at (2,0) {$b$};
\node (B') at (3,1) {$m'$};
\node (C') at (4,0) {$c$.};
\node (D) at (2,2) {$m+_b m'$};
\node (E) at (7,2) { $F(j) \left( \phi_{m,m'} (d,d') \right) \in F(m +_b m') $};
\node (push) at (2,1.4) {\rotatebox{135}{$\ulcorner$}};
\path[->,font=\scriptsize,>=angle 90]
(A) edge node[above, pos = 0.25]{$i$} (B)
(C) edge node[above, pos = 0.25]{$o$} (B)
(C) edge node[above, pos = 0.25]{$i'$} (B')
(C') edge node[above, pos = 0.25]{$o'$} (B')
(B) edge node[above, pos = 0.25]{} (D)
(B') edge node[above, pos = 0.25]{} (D);
\end{tikzpicture}
}
\]
Here the cospans are composed using a pushout in $\A$, while the decoration is defined using
the laxator and $F$ applied to $j \maps m + m' \to m +_b m'$, the canonical map from the coproduct to the pushout.    The choice of laxator gives a fair amount of flexibility in how the new cospan is decorated---but the laxator must obey some laws that guarantee that this prescription gives a double category \cite[App.\ A.1]{BCV}.

\begin{thm}
\label{thm:decorated_cospan_1}
Let $\A$ be a category with finite colimits and let $F \maps (\A,+) \to (\Cat,\times)$ be a lax monoidal pseudofunctor. Then there is a double category $F\lCsp$, the \define{double category of} $F$-\define{decorated cospans}, in which
\begin{itemize}
\item an object is an object of $\A$,
\item a tight morphism is a morphism of $\A$,
\item a loose morphism is an $F$-decorated cospan
\[
\scalebox{1.2}{
\begin{tikzpicture}[column sep={.4in,between origins}, row sep=.08in]
\node (A) at (0,0) {$a$};
\node (B) at (1,1) {$m$};
\node (C) at (2,0) {$b,$};
\node (E) at (4,1) {$d \in F(m)$};
\path[->,font=\scriptsize,>=angle 90]
(A) edge node[above,pos=0.3]{$i$} (B)
(C) edge node[above,pos=0.3]{$o$} (B);
\end{tikzpicture}
}
\]
\item a 2-cell is a \define{map of} $F$\define{-decorated cospans}, that is, 
a commutative diagram in $\A$ of the form
\[
\scalebox{1.2}{
\begin{tikzpicture}[scale=1.2]
\node (A) at (0,0.5) {$a$};
\node (A') at (0,-0.5) {$a'$};
\node (B) at (1,0.5) {$m$};
\node (C) at (2,0.5) {$b$};
\node (C') at (2,-0.5) {$b'$};
\node (D) at (1,-0.5) {$m'$};
\node (E) at (3,0.5) {$d \in F(m)$};
\node (F) at (3,-0.5) {$d' \in F(m')$};
\path[->,font=\scriptsize,>=angle 90]
(A) edge node[above]{$i$} (B)
(C) edge node[above]{$o$} (B)
(A) edge node[left]{$f$} (A')
(C) edge node[right]{$g$} (C')
(A') edge node[below] {$i'$} (D)
(C') edge node[below] {$o'$} (D)
(B) edge node [left] {$h$} (D);
\end{tikzpicture}
}
\]
together with a \define{decoration morphism} $\tau \maps F(h)(d) \to d'$ in $F(m')$,
\item composition of tight morphisms and vertical composition of 2-cell is done using composition in $\A$,
\item composition of loose morphisms is done as described above,
\item horizontal composition of 2-cells is done as described in \cite[Thm.\ 2.1]{BCV}.
\end{itemize}
\end{thm}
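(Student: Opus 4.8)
The statement already specifies every piece of data of $F\lCsp$, so the plan is simply to verify the pseudo double category axioms. The organizing idea is that the underlying-cospan data of $F\lCsp$---forget the decorations and the decoration morphisms---is exactly the plain cospan double category $\lCsp(\A)$, which exists by Theorem~\ref{thm:structured_cospan_1} applied to $L=\id_\A$ (legitimate since $\A$ has finite colimits, hence pushouts). So all the ``cospan bookkeeping''---functoriality of pushout composition, the associator and unitors of loose composition and their coherence---can be imported from $\lCsp(\A)$, and the only genuinely new content is how the decorations ride along this structure.

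First I would check that $F\lCsp_0$ and $F\lCsp_1$ are categories. The former is just $\A$. In the latter, a vertical composite of maps of decorated cospans is formed by composing the underlying commutative diagrams in $\A$ and composing decoration morphisms through the compositor of the pseudofunctor: given $\tau\maps F(h)(d)\to d'$ and $\tau'\maps F(h')(d')\to d''$, the composite decoration morphism is
\[
F(h'h)(d)\xrightarrow{(F_{h',h})_{d}}F(h')\bigl(F(h)(d)\bigr)\xrightarrow{F(h')(\tau)}F(h')(d')\xrightarrow{\ \tau'\ }d'',
\]
where $F_{h',h}\maps F(h'h)\To F(h')\circ F(h)$ is the pseudofunctor's comparison. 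Strict associativity and unitality of vertical composition then follow from the coherence axioms on $F$. Next I would verify that source, target and the identity-assigning functor are well defined: source and target read off the feet of a decorated cospan; the loose identity on $a$ is the cospan $a\xleftarrow{\id}a\xrightarrow{\id}a$ carrying the canonical decoration $e_a\mapseqq F(!_a)\bigl(\epsilon(\ast)\bigr)\in F(a)$, where $\epsilon\maps\one\To F(0)$ is the lax unit of $F$ and $!_a\maps 0\to a$; functoriality in tight morphisms is the naturality of $\epsilon$. Then I would check that loose composition is functorial on the pullback $F\lCsp_1\times_{F\lCsp_0}F\lCsp_1$: on apices and cospans it is the pushout composition of $\lCsp(\A)$, and on decorations it is $F(j)\bigl(\phi_{m,m'}(-,-)\bigr)$ with $j\maps m+m'\to m+_b m'$ the comparison, which is functorial in $2$-cells because $\phi$ is natural and $F$ is a pseudofunctor; finally that horizontal composition of $2$-cells as in \cite[Thm.~2.1]{BCV} satisfies the interchange law with vertical composition.

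The heart of the argument is the construction of the associator and left/right unitor globular $2$-cells and the verification of the pentagon and triangle identities. The associator has identity tight legs and apex component the canonical isomorphism $(m+_b m')+_c m''\iso m+_b(m'+_c m'')$ in $\A$; its decoration component must be an isomorphism comparing $F(j')\bigl(\phi(F(j)(\phi(d,d')),d'')\bigr)$ with $F(j'')\bigl(\phi(d,F(k)(\phi(d',d'')))\bigr)$, and it is assembled from the naturality of the laxator $\phi$, the associativity coherence of $\phi$ against the associator of $(\A,+)$, and the compositor $F_{-,-}$. The unitors compare $F(j)\bigl(\phi_{a,m}(e_a,d)\bigr)$ with $d$, using the naturality of $\phi$, the unit coherence $\phi_{0,m}(\epsilon,-)\iso F(\lambda_m^{-1})$ of the lax monoidal pseudofunctor (with $\lambda_m\maps 0+m\iso m$), and $F$ of the canonical identification $m+_a m\iso m$. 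The pentagon and triangle identities for these isomorphisms then unwind to the corresponding identities for pushouts in $\A$---already available from $\lCsp(\A)$---together with the coherence laws of the lax monoidal pseudofunctor $F$.

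The main obstacle is exactly this coherence bookkeeping: one must check that each decoration component built above is genuinely an isomorphism and that every naturality square in sight commutes, so that the associator and unitors are honest globular $2$-cells and all the double category axioms hold---strictly in the tight direction and up to coherent isomorphism in the loose direction. This is a lengthy but essentially mechanical diagram chase; it is precisely the package of conditions isolated as the laws the laxator must obey in \cite[App.~A.1]{BCV}, and for the complete verification I would refer there. (One could instead seek an abstract derivation, realizing $F\lCsp_1$ as a Grothendieck construction of the pseudofunctor that sends a cospan in $\A$ to the category of decorations of its apex and promoting this to a double-categorical ``category of elements''; I would not pursue that here, since the fibres of $F$ are assumed to carry no colimits and so this reformulation does not lighten the coherence check.)
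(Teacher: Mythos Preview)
Your proposal is correct and aligns with the paper's approach: the paper's entire proof is the citation ``See \cite[Thm.\ 2.1]{BCV},'' and your sketch is precisely an outline of the argument carried out there, with the same organizing idea (ride on top of $\lCsp(\A)$ and check that the decorations and decoration morphisms cohere via the lax monoidal pseudofunctor axioms), ultimately deferring the full coherence verification to the same reference. Your write-up is in fact more informative than the paper's bare citation.
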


\begin{proof}
See \cite[Thm.\ 2.1]{BCV}.
\end{proof}

Equipping $F$ with some more structure, the double category of $F$-decorated cospans becomes symmetric monoidal.   We could also study decorated cospan double categories that are merely monoidal, or braided monoidal, but so far these come up much less often.

\begin{thm}
\label{thm:decorated_cospan_2}
Let $\A$ be a category with finite colimits and let $F \maps (\A,+) \to (\Cat,\times)$ be a symmetric lax monoidal pseudofunctor with laxator $\phi_{a,b} \maps F(a) \times F(b) \to F(a + b)$. Then the double category $F\lCsp$ becomes symmetric monoidal, where the tensor product
\begin{itemize}
\item of two objects $a$ and $b$ is their coproduct $a+b$ in $\A$,
\item of two tight morphisms $f \maps a_1\to a_2$ and $f' \maps a_1' \to a_2'$ is $f+f' \maps a_1+a_1' \to a_2+a_2'$ in $\A$,
\item of two loose morphisms $(a \to m \leftarrow b , d \in F(m))$ and $(a'\to m' \leftarrow b', d'\in F(n))$ is:
\[
\begin{tikzpicture}
\scalebox{1.2}{
\node (A) at (0,0) {$a+a'$};
\node (B) at (1,1) {$m+m'$};
\node (C) at (2,0) {$b+b'$};
\node (D) at (4,1) {$\phi_{m,m'}(d,d') \in F(m + m')$};
\path[->,font=\scriptsize,>=angle 90]
(A) edge node[above,pos=0.3]{$i$} (B)
(C)edge node[above,pos=0.3]{$o$}(B);
}
\end{tikzpicture}
\]
\item of two 2-cells $\alpha$ and $\beta$ is:
\[
\begin{tikzpicture}
\scalebox{1.2}{
\node (A) at (0,0.5) {$a_1$};
\node (A') at (0,-0.5) {$a_2$};
\node (B) at (1,0.5) {$m_1$};
\node (C) at (2,0.5) {$b_1$};
\node (C') at (2,-0.5) {$b_2$};
\node (D) at (1,-0.5) {$m_2$};
\node () at (2.7,0) {$\otimes$};
\node () at (6.2,0) {$=$};
\node (G) at (3.5,0.5) {$a'_1$};
\node (H) at (4.5,0.5) {$m'_1$};
\node (I) at (5.5,0.5) {$b'_1$};
\node (G') at (3.5,-0.5) {$a_2'$};
\node (H') at (4.5,-0.5) {$m'_2$};
\node (I') at (5.5,-0.5) {$b'_2$};
\node (L) at (1,-1.2) {$\scriptstyle\tau_\alpha \maps F(h)(d_1) \to d_2\textrm{ in } F(m_2)$};
\node (M) at (4.5,-1.2) {$\scriptstyle\tau_\beta \maps F(h')(d'_1) \to d'_2\textrm{ in } F(m'_2)$};
\path[->,font=\scriptsize,>=angle 90]
(A) edge node[above]{$i_1$} (B)
(C) edge node[above]{$o_1$} (B)
(A) edge node[left]{$f$} (A')
(C) edge node[right]{$g$} (C')
(A') edge node[below] {$i_2$} (D)
(C') edge node[below] {$o_2$} (D)
(B) edge node [left] {$h$} (D)
(G) edge node [above] {$i'_2$} (H)
(G) edge node [left] {$f'$} (G')
(H) edge node [left] {$h'$} (H')
(G') edge node [below] {$i_2'$} (H')
(I) edge node [below] {$o_2$} (H)
(I) edge node [right] {$g'$} (I')
(I') edge node [below] {$o_2'$} (H');
}
\end{tikzpicture}
\]
\vskip 1em
\[
\begin{tikzpicture}
\scalebox{1.2}{
\node (A) at (0,0.5) {$a_1 + a'_1$};
\node (A') at (0,-0.5) {$a_2 + a'_2$};
\node (B) at (2,0.5) {$m_1 + m'_1$};
\node (C) at (4,0.5) {$b_1 + b'_1$};
\node (C') at (4,-0.5) {$b_2 + b'_2$};
\node (D) at (2,-0.5) {$m_2 + m'_2$};
\node () at (2,-1.3) {$\scriptstyle\tau_{\alpha\otimes\beta}\maps F(h+h')(\phi_{m_1,m'_1}(d_1,d'_1))\to \phi_{m_2,m'_2}(d_2,d'_2)\textrm{ in }F(d_2,d'_2)$};
\path[->,font=\scriptsize,>=angle 90]
(A) edge node[above]{$i_1+i'_2$} (B)
(C) edge node[above]{$o_1+o'_1$} (B)
(A) edge node[left]{$f+f'$} (A')
(C) edge node[right]{$g+g'$} (C')
(A') edge node [below]{$i_2+i_2'$} (D)
(C') edge node [below]{$o_2+o_2'$} (D)
(B) edge node [left] {$h+h'$} (D);
}
\end{tikzpicture}
\]
with decoration morphism $\tau_{\alpha\otimes\beta}$ constructed functorially from $\tau_\alpha$ and $\tau_\beta$.
\end{itemize}
\end{thm}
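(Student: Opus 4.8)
\emph{Proof proposal.}\quad Since Theorem~\ref{thm:decorated_cospan_1} already supplies the double category $F\lCsp$, the task is to equip it with the data of a symmetric monoidal double category --- a tensor double functor $\otimes \maps F\lCsp \times F\lCsp \to F\lCsp$, a unit object, and invertible transformations serving as associator, left and right unitors, and braiding --- and then to verify the coherence axioms (pentagon, triangle, the hexagons, $\beta^2 = \id$) together with the compatibility of these transformations with loose composition. The plan is to build each piece from two independent ingredients: the symmetric monoidal (indeed cocartesian) structure carried by the double category $\lCsp(\A)$ of cospans in $\A$, which exists because $\A$ has finite colimits; and the symmetric lax monoidal structure on $F$, namely the laxator $\phi$ together with its unit object $e \in F(0)$ picked out by the lax unit $\mathbf{1} \to F(0)$ and the associativity, unitality and symmetry constraints of $(F,\phi)$.

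First I would construct the tensor double functor. Its effect on objects, tight morphisms, loose morphisms and 2-cells is dictated by the statement: on a pair of loose morphisms it is $(a+a' \to m+m' \leftarrow b+b',\ \phi_{m,m'}(d,d'))$, and on a pair of 2-cells it uses functoriality of $\phi$ in the apex variables to build the decoration morphism $\tau_{\alpha\otimes\beta}$. The one substantive point is that $\otimes$ preserves loose composition up to a coherent comparison isomorphism. On underlying cospans this is the canonical isomorphism $(m_1 +_b m_2) + (m_1' +_{b'} m_2') \cong (m_1 + m_1') +_{b+b'} (m_2 + m_2')$, valid because pushouts commute with coproducts in $\A$; on decorations one must identify $\phi$ of the two pushout decorations with the pushout decoration of $\phi(d_1,d_1')$ and $\phi(d_2,d_2')$, and this identification is a composite of the pseudonaturality cell of $\phi$ along the coproduct-to-pushout maps $j$ with the pseudofunctor constraints of $F$. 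Coherence of this comparison, and the identity and unit compatibilities of $\otimes$, then reduce to the pseudofunctor coherence of $F$ and the fact that maps into a pushout (resp.\ out of a coproduct) are determined by their legs.

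Next I would exhibit the unit and the structure transformations. The unit object is the initial object $0 \in \A$ with decoration $e \in F(0)$, and the unit loose morphism is the empty cospan $0 \to 0 \leftarrow 0$ decorated by $e$. For the associator: on objects it is the coproduct associativity isomorphism $(a+b)+c \cong a+(b+c)$ in $\A$; on a triple of loose morphisms it is this isomorphism of cospans together with the decoration morphism obtained from the associativity constraint of $(F,\phi)$, which compares $\phi_{m+m',m''}(\phi_{m,m'}(d,d'),d'')$ with $\phi_{m,m'+m''}(d,\phi_{m',m''}(d',d''))$ after transport along the coproduct associator by $F$. The unitors come from the unit constraints of $\phi$ in the same way, and the braiding from the symmetry constraint of $\phi$ together with the coproduct braiding $a+b \cong b+a$. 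Each of these 2-cells is invertible because its tight components are the relevant coproduct isomorphisms and its decoration component is a constraint cell of a symmetric lax monoidal pseudofunctor, hence invertible.

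Finally, I would check the axioms. Every diagram to be verified --- pentagon and triangle for $(\otimes,a,l,r)$, the hexagons and $\beta^2 = \id$ for $\beta$, and loose-naturality of $a,l,r,\beta$ with respect to loose composition --- splits into a statement about the underlying cospans in $\A$ and a statement about the decorations. The cospan-level statements hold because $(\A,+)$ is a genuine symmetric monoidal category and because maps into a pushout are determined by their legs, so the competing 2-cells in $\lCsp(\A)$ are forced to agree. The decoration-level statements are exactly the coherence axioms of $(F,\phi)$ as a symmetric lax monoidal pseudofunctor (the associativity pentagon for $\phi$, the unit triangles, the symmetry hexagons) together with $F$'s own pseudofunctor coherence. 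It is enough to check the axioms in the packaging used by Shulman \cite{Shulman2010} for symmetric monoidal double categories, and the non-monoidal bookkeeping --- in particular horizontal composition of 2-cells --- is already handled in \cite[Thm.\ 2.1]{BCV}, while the symmetric monoidal structure on the structured-cospan side in Theorem~\ref{thm:structured_cospan_2} serves as a template. The main obstacle is the sheer volume of routine diagram-chasing rather than any single hard idea; the one place that needs genuine care is the interchange isomorphism between $\otimes$ and loose composition, where the pushout--coproduct comparison in $\A$ and the pseudonaturality of $\phi$ along the maps $j$ must be tracked simultaneously.
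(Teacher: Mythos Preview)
Your sketch is correct in outline and in spirit: the symmetric monoidal structure on $F\lCsp$ is indeed assembled from the cocartesian monoidal structure on $\lCsp(\A)$ together with the coherence data of the symmetric lax monoidal pseudofunctor $(F,\phi)$, and the coherence axioms do split cleanly into a cospan-level part and a decoration-level part as you describe. You have also correctly identified the one genuinely delicate point, namely the interchange between $\otimes$ and loose composition, where the pushout--coproduct comparison in $\A$ must be matched with the pseudonaturality of $\phi$ along the canonical maps $m+m' \to m+_b m'$.

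The paper itself does not carry out this argument: its proof is a bare citation to \cite[Thm.\ 2.2]{BCV}, where the full verification is done. So there is no meaningful difference in \emph{approach} to compare --- your sketch is essentially the strategy of the cited proof, and there is really no other natural way to proceed. What you have written would serve as a reasonable roadmap for reconstructing that proof; the only thing missing is the explicit formula for $\tau_{\alpha\otimes\beta}$ (a composite of the pseudonaturality square for $\phi$ with $\phi_{m_2,m_2'}(\tau_\alpha,\tau_\beta)$), which the paper also defers to \cite{BCV}.
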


\begin{proof}
See \cite[Thm.\ 2.2]{BCV}, which also includes the details of how we construct the decoration morphism $\tau_{\alpha \otimes \beta}$.
\end{proof}

\subsection{Structured versus decorated cospans}
\label{subsec:structured_versus_decorated_cospans}

When should we use structured cospans, and when should we use decorated cospans?   Structured cospans are usually easier to work with, because they are defined using 1-categorical data: a functor $L \maps \A \to \X$ with some properties.  Decorated cospans are generally defined using more complicated 2-categorical data: a lax monoidal pseudofunctor $F \maps \A \to \Cat$.   Furthermore, decorated cospans can often be reformulated as structured cospans.   These observations favor structured cospans---and thus, these are more widely used in software \cite{Catlab,HP}.

There is an important exception.   When the categories $F(a)$ for $a \in \A$ are all discrete, we can think of them as sets and treat $F$ as a lax monoidal functor $F \maps \A \to \Set$.  Then 2-categorical concepts are not needed, and the theory of decorated cospans simplifies dramatically.  This in fact was Fong's original approach to decorated cospans \cite{Fong,FongThesis}.   Further, as we shall see, some examples of this sort seem to give decorated cospans that cannot be reformulated as structured cospans.

When are all the categories $F(a)$ discrete?    This happens precisely when the opfibration $U \maps \inta F \to \A$ is discrete, where $\inta F$ is defined using the Grothendieck construction.    An object of $\inta F$ is a pair $(a,d)$ consisting of an object $a \in \A$ and a decoration $d \in F(a)$.    The opfibration $U \maps \inta F \to \A$ forgets the decoration.   When $U$ is a discrete opfibration, it means that for any $d \in F(a)$ there is a unique way to equip any morphism $h \maps a \to a'$ in $\A$ with a decoration morphism $F(h)(d) \to d'$.    This is an unusual situation.   But we shall see an important example in Section \ref{sec:dynamical}: dynamical systems.
 
Since structured cospans are usually easier to work with than decorated cospans, another question naturally arises: when are we \emph{forced} to use decorated cospans?   The dynamical systems just mentioned appear to be an example. However, there is not yet an airtight proof.  

We do know a general result on when decorated cospan double categories \emph{are} equivalent to structured cospan double categories \cite{BCV}.  Begin with the data needed to construct a symmetric monoidal decorated cospan double category, as in  Theorem \ref{thm:decorated_cospan_2}.  That is, suppose $\A$ has finite colimits and $ F \maps (\A , +) \to (\Cat, \times)$ is a symmetric lax monoidal pseudofunctor.   It turns out that $F$ can naturally be promoted to a pseudofunctor $F \maps \A \to \SMC$, where $\SMC$ is the 2-category with 
\begin{itemize}
\item symmetric monoidal categories as objects,
\item strong symmetric monoidal functors as morphisms,
\item monoidal natural transformation as 2-morphisms.
\end{itemize}
Let $\Rex$ be the 2-category of with
\begin{itemize}
\item categories with chosen finite colimits as objects,
\item functors preserving finite colimits as morphisms,
\item natural transformations as 2-morphisms.
\end{itemize}
There is an evident pseudofunctor $\Rex \to \SMC$.  Now suppose that $F \maps \A \to \SMC$ factors, as a pseudofunctor, through $\Rex$: this is the key hypothesis.   Then one can show that the opfibration $U \maps \inta F \to \A$ is a right adjoint.   From its left adjoint $L \maps \A \to \inta F$ we can construct a structured cospan double category ${}_L \lCsp(\X)$ by taking
\[         \X = \inta F  .\]
And in fact, under the hypotheses given, the structured cospan double category ${}_L \lCsp(\X)$ is not only equivalent but isomorphic to the decorated cospan double category $F \lCsp$.

In short:

\begin{thm}
\label{thm:equivalence_of_structured_and_decorated}
Let $\A$ be a category with finite colimits and $F \maps (\A,+) \to (\Cat,\times)$ a symmetric lax monoidal pseudofunctor.  Suppose the resulting pseudofunctor $F \maps \A \to \SMC$ factors through $\Rex$.  Then $U \maps \inta F \to \A$ has a left adjoint $L$. Furthermore, the structured cospan double category ${}_L \lCsp$ is isomorphic, as a symmetric monoidal double category, to the decorated cospan double category $F \lCsp$.
\end{thm}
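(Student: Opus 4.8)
The plan is to establish the two assertions separately: first the existence of the left adjoint $L$, then the isomorphism of symmetric monoidal double categories.

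For the adjoint I would work directly with the Grothendieck construction. An object of $\inta F$ is a pair $(a,d)$ with $a \in \A$, $d \in F(a)$; a morphism $(a,d)\to(b,e)$ is a pair $(h\maps a\to b,\ \tau\maps F(h)(d)\to e)$; and $U$ forgets $d$. The crucial consequence of the hypothesis that $F$ factors through $\Rex$ is that each fibre $F(a)$ is finitely cocomplete with a chosen initial object $0_a$, and each $F(h)$ preserves it. So I would set $L(a)=(a,0_a)$ and check that the projection $\Hom_{\inta F}((a,0_a),(b,e))\to\Hom_\A(a,b)$ is a bijection, natural in both variables: a morphism out of $(a,0_a)$ consists of $h\maps a\to b$ together with $\tau\maps F(h)(0_a)\to e$, and $F(h)(0_a)$ is initial, so $\tau$ is forced. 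This gives $L\adj U$, with identity unit and counit $(\id_b,\ !\maps 0_b\to e)$. I would also record here that $L$ preserves the initial object and finite coproducts --- since a coproduct in $\inta F$ pushes the initial decorations of its summands forward to an initial decoration, giving $L(a)+L(b)=(a+b,0_{a+b})=L(a+b)$ --- because this is what makes Theorem~\ref{thm:structured_cospan_2} applicable below.

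Next I would put $\X=\inta F$. Since $\A$ and every fibre $F(a)$ have finite colimits, all preserved by every $F(h)$, the standard formula for colimits in such a Grothendieck construction shows $\inta F$ has finite colimits; together with the previous paragraph this lets Theorems~\ref{thm:structured_cospan_1} and~\ref{thm:structured_cospan_2} equip ${}_L\lCsp$ with its cocartesian and symmetric monoidal structure. The comparison with $F\lCsp$ is then carried out cell by cell. Objects and tight morphisms agree on the nose. A loose morphism of ${}_L\lCsp$ is a cospan $(a,0_a)\to(m,d)\leftarrow(b,0_b)$ in $\inta F$; the decoration component of each leg emanates from an initial object and so is uniquely determined, leaving exactly a cospan $a\to m\leftarrow b$ in $\A$ together with $d\in F(m)$ --- an $F$-decorated cospan. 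The same ``everything on the feet is forced'' observation identifies 2-cells of ${}_L\lCsp$ with maps of $F$-decorated cospans, the only surviving datum being the apex decoration morphism $\tau\maps F(h)(d)\to d'$. This produces a bijective-on-cells candidate $\Psi\maps{}_L\lCsp\to F\lCsp$ with an evident inverse.

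The heart of the proof --- and the step I expect to be the main obstacle --- is showing $\Psi$ is a \emph{strict} double functor and \emph{strict} symmetric monoidal, i.e.\ that it carries pushout-based loose composition and coproduct-based tensoring in ${}_L\lCsp$ exactly onto the operations of Theorems~\ref{thm:decorated_cospan_1} and~\ref{thm:decorated_cospan_2}. For loose composition I would compute the pushout in $\inta F$ of $(m,d)\leftarrow(b,0_b)\to(m',d')$: its apex is the pushout $m+_b m'$ in $\A$, with coprojections $k,k'$, and its decoration is the colimit in $F(m+_b m')$ of the pushed-forward span $F(k)(d)\leftarrow 0\to F(k')(d')$, namely $F(k)(d)+F(k')(d')$. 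To see this equals the decoration $F(j)\big(\phi_{m,m'}(d,d')\big)$ of Theorem~\ref{thm:decorated_cospan_1}, where $j=[k,k']\maps m+m'\to m+_b m'$, I would use that the $\Rex$-factorization identifies the laxator with the cocartesian one, $\phi_{m,m'}(d,d')\iso F(\iota_m)(d)+F(\iota_{m'})(d')$, and that $F(j)$ preserves coproducts, whence $F(j)(\phi_{m,m'}(d,d'))\iso F(k)(d)+F(k')(d')$. The analogous, easier computation matches the tensor product of Theorem~\ref{thm:decorated_cospan_2} and its action on 2-cells. The genuinely delicate point is that all of these comparisons must be upgraded from coherent isomorphism to strict equality, so that $\Psi$ is an isomorphism rather than merely an equivalence; this forces one to build $\inta F$, its chosen colimits, and the structured-cospan composition out of the chosen finite colimits of $\A$ and of the fibres in a mutually compatible way, following \cite[Thms.\ 2.1--2.2]{BCV}. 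Granting that setup, the remaining double-category and symmetric-monoidal coherence checks for $\Psi$ are routine diagram chases.
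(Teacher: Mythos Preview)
Your proposal is correct and follows exactly the approach the paper sketches in the discussion preceding the theorem; the paper itself does not give a proof but simply cites \cite[Thm.\ 4.1]{BCV}. Your outline---building $L$ from fibrewise initial objects, identifying cells via the ``decorations on feet are forced'' observation, and matching loose composition by computing the pushout decoration in $\inta F$ and comparing it to $F(j)(\phi_{m,m'}(d,d'))$ via the cocartesian identification of the laxator---is precisely the argument of the cited reference, and your flagging of the strictness issue (isomorphism rather than equivalence) as the delicate point is apt.
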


\begin{proof}
This is \cite[Thm.\ 4.1]{BCV}.
\end{proof}

This lets us see certain structured cospans as decorated cospans.  The converse question is also interesting: when is a structured cospan double category equivalent to a decorated cospan double category?  We believe this is true under certain conditions that let us pass from the functor $L \maps \A \to \X$ to an appropriate pseudofunctor $F \maps \A \to \Cat$.   For details, see \cite[Sec.\ 7]{BCV}.   However, the program outlined there has not yet been completed.  Thus, we pose this challenge:

\begin{problem}
Find necessary and sufficient conditions for a structured cospan category to be equivalent to some decorated cospan double category, or vice versa. 
\end{problem}

Next we turn from these rather abstruse issues to something simpler: an example of how we \emph{use} structured cospans.

\section{Petri nets}
\label{sec:Petri}

Here is a Petri net:
\[
\begin{tikzpicture}
	\begin{pgfonlayer}{nodelayer}
\node [style=species] (C) at (-1, 0) {$C$};
\node [style=species] (B) at (-4, -0.5) {$B$};
		\node [style=species] (A) at (-4, 0.5) {$A$};
		\node [style=transition] (tau1) at (-2.5, 0.6) {$\alpha$};
             \node [style=transition] (tau2) at (-2.5,-0.7) {$\beta$};
	\end{pgfonlayer}
	\begin{pgfonlayer}{edgelayer}
		\draw [style=inarrow] (A) to (tau1);
		\draw [style=inarrow] (B) to (tau1);
		\draw [style=inarrow, bend left=15, looseness=1.00] (tau1) to (C);
	       \draw [style=inarrow, bend left=15, looseness=1.00] (C) to (tau2);
             \draw [style=inarrow, bend left=15, looseness=1.00] (tau2) to (B); 
            \draw [style=inarrow, bend right =15, looseness=1.00] (tau2) to (B); 
	\end{pgfonlayer}
\end{tikzpicture}
\]
We can use Petri nets to describe processes where collections of things move between the yellow circles, called `places', by going through the aqua boxes, called `transitions'.   Petri nets are widely used as models of systems in engineering and computer science \cite{GiraultValk, Peterson}.   One of the simplest examples of the resource sharing paradigm is the theory of open Petri nets \cite{BM}, which lets us build Petri nets out of smaller pieces.    We can construct a double category with open Petri nets as loose morphisms using the theory of structured cospans.   There are many possible semantics for open Petri nets, but we shall only describe one, where `tokens' move from place to place via transitions.  This gives an excuse for studying maps between structured cospan double categories.   We also take a look at how to use the 2-cells in double categories of open systems.

While various subtly different definitions are useful \cite{BGMS}, here we take a  \define{Petri net} to be a pair of finite sets $S$ and $T$ and functions $s,t \maps T \to \N[S]$.  Here $S$ is the set of \define{places}, $T$ is the set of \define{transitions}, and $\N[S]$ is the underlying set of the free commutative monoid on $S$.   Thus, elements of $\N[S]$ are finite formal sums of places, and each transition goes from one such formal sum to another.   For example, the source of the transition $\alpha$ above is $\text{A}+\text{B}$, because this transition has one arrow coming into it from $A$ and one from $B$.  The target of $\alpha$ is is $\text{C}$, since it has one arrow going out to $C$.   We can summarize all this information by writing 
\[    \text{A} + \text{B} \xrightarrow{\alpha} \text{C}. \] 
Similarly, we can write the other transition as
\[          \text{C}  \xrightarrow{\beta} 2 \text{B} .\]
Beware: $\alpha$ and $\beta$ are not morphisms in a category!  They are just transitions in a Petri net.  But next we shall see how a Petri net can generate a symmetric monoidal category, with the transitions giving morphisms.

To do this, we define a \define{marking} of a Petri net $s,t \maps T \to \N[S]$ to be an element of $\N[S]$.  Thus, it assigns to each place a natural number.  We think of this as specifying a number of \define{tokens} in each place.   We commonly draw these tokens as dots in the yellow circles that represent places.   For example, given this Petri net:
\[
\begin{tikzpicture}
	\begin{pgfonlayer}{nodelayer}

\node [style=species] (C) at (-1, 0) {$C$};
\node [style=species] (B) at (-4, -0.5) {$B$};
\node [style=species] (A) at (-4, 0.5) {$A$};
	    \node [style=transition] (tau1) at (-2.5, 0.6) {$\alpha$};
             \node [style=transition] (tau2) at (-2.5,-0.7) {$\beta$};
	\end{pgfonlayer}
	\begin{pgfonlayer}{edgelayer}
		\draw [style=inarrow] (A) to (tau1);
		\draw [style=inarrow] (B) to (tau1);
		\draw [style=inarrow, bend left=15, looseness=1.00] (tau1) to (C);
	       \draw [style=inarrow, bend left=15, looseness=1.00] (C) to (tau2);
             \draw [style=inarrow, bend left=15, looseness=1.00] (tau2) to (B); 
            \draw [style=inarrow, bend right =15, looseness=1.00] (tau2) to (B); 
	\end{pgfonlayer}
\end{tikzpicture}
\]
the marking $A + 2B$ would be written as follows:
\[
\begin{tikzpicture}
	\begin{pgfonlayer}{nodelayer}
\node [style=species] (C) at (-1, 0) {$\phantom{C}$};
\node [style=species] (B) at (-4, -0.5) {$\bullet\phantom{.} \bullet$};
		\node [style=species] (A) at (-4, 0.5) {$\phantom{.}\bullet\phantom{.}$};
		\node [style=transition] (tau1) at (-2.5, 0.6) {$\alpha$};
             \node [style=transition] (tau2) at (-2.5,-0.7) {$\beta$};
	\end{pgfonlayer}
	\begin{pgfonlayer}{edgelayer}
		\draw [style=inarrow] (A) to (tau1);
		\draw [style=inarrow] (B) to (tau1);
		\draw [style=inarrow, bend left=15, looseness=1.00] (tau1) to (C);
	       \draw [style=inarrow, bend left=15, looseness=1.00] (C) to (tau2);
             \draw [style=inarrow, bend left=15, looseness=1.00] (tau2) to (B); 
            \draw [style=inarrow, bend right =15, looseness=1.00] (tau2) to (B); 
	\end{pgfonlayer}
\end{tikzpicture}
\]
We change the marking using transitions.  For example one token of type $A$ and one of type $B$ can enter the transition $\alpha$, and a token of type $C$ can come out, giving this marking:
\[
\begin{tikzpicture}
	\begin{pgfonlayer}{nodelayer}
\node [style=species] (C) at (-1, 0) {$\phantom{.}\bullet\phantom{.}$};
\node [style=species] (B) at (-4, -0.5) {$\phantom{.}\bullet\phantom{.}$};
		\node [style=species] (A) at (-4, 0.5) {$\bullet\phantom{.} \bullet$};
		\node [style=transition] (tau1) at (-2.5, 0.6) {$\alpha$};
             \node [style=transition] (tau2) at (-2.5,-0.7) {$\beta$};
	\end{pgfonlayer}
	\begin{pgfonlayer}{edgelayer}
		\draw [style=inarrow] (A) to (tau1);
		\draw [style=inarrow] (B) to (tau1);
		\draw [style=inarrow, bend left=15, looseness=1.00] (tau1) to (C);
	       \draw [style=inarrow, bend left=15, looseness=1.00] (C) to (tau2);
             \draw [style=inarrow, bend left=15, looseness=1.00] (tau2) to (B); 
            \draw [style=inarrow, bend right =15, looseness=1.00] (tau2) to (B); 
	\end{pgfonlayer}
\end{tikzpicture}
\]
Then the token of type $C$ can enter the transition $\beta$, and two of type $B$ can come out, giving this marking:
\[
\begin{tikzpicture}
	\begin{pgfonlayer}{nodelayer}
\node [style=species] (C) at (-1, 0) {$\phantom{||}$};
\node [style=species] (B) at (-4, -0.5) {$\bullet\!\bullet\!\bullet$};
		\node [style=species] (A) at (-4, 0.5) {$\bullet\phantom{.} \bullet$};
		\node [style=transition] (tau1) at (-2.5, 0.6) {$\alpha$};
             \node [style=transition] (tau2) at (-2.5,-0.7) {$\beta$};
	\end{pgfonlayer}
	\begin{pgfonlayer}{edgelayer}
		\draw [style=inarrow] (A) to (tau1);
		\draw [style=inarrow] (B) to (tau1);
		\draw [style=inarrow, bend left=15, looseness=1.00] (tau1) to (C);
	       \draw [style=inarrow, bend left=15, looseness=1.00] (C) to (tau2);
             \draw [style=inarrow, bend left=15, looseness=1.00] (tau2) to (B); 
            \draw [style=inarrow, bend right =15, looseness=1.00] (tau2) to (B); 
	\end{pgfonlayer}
\end{tikzpicture}
\]
It is natural to think of these ways of changing a marking as morphisms in a category where markings are objects.  This is a symmetric monoidal category, but of a very strict sort: the tensor product commutes on the nose!    The reason is that markings form a commutative monoid, namely the free commutative monoid on the set of places.

A \define{commutative monoidal category} is a symmetric monoidal category $(\C,\otimes)$ such that
the associators $\alpha_{a,b,c} \maps (a \otimes b) \otimes c$, unitors $\lambda_a \maps I \otimes a \to a$, $\rho \maps a \otimes I \to a$, and even the symmetry isomorphisms $\sigma_{a,b} \maps a \otimes b \to b \otimes a$ are all identity morphisms.  Thus, for all objects $a$ and $b$ and morphisms $f$ and $g$ in $\C$ we have
	\[a \otimes b = b \otimes a \text{ and } f \otimes g = g \otimes f.\]
We let $\CMC$ be the category whose objects are commutative monoidal categories and whose morphisms are symmetric strict monoidal functors.   

Another useful way to think of $\CMC$ is as the category of categories internal to the category of commutative monoids.   In this viewpoint, a commutative monoidal category has a commutative monoid of objects and a commutative monoid of morphisms, and all the category operations are monoid homomorphisms.

We can formalize the process of turning a Petri net $P = (s,t \maps T \to \N[S])$ into a commutative monoidal category $FP$ as follows.  We take the commutative monoid of objects $\Ob(FP)$ to be the free commutative monoid $\N[S]$.  Note that element of $\N[S]$ are markings of $P$.    We construct the commutative monoid of morphisms $\Mor(FP)$ as follows.  First we generate morphisms recursively, starting from the transitions of $P$:
\begin{itemize}
\item for every transition $\tau \in T$ we include a morphism $\tau \maps s(\tau) \to t(\tau)$;
\item for any object $a$ we include a morphism $1_a \maps a \to a$;
\item for any morphisms $f \maps a \to b$ and $g \maps a' \to b'$ we include a morphism denoted $f+g \maps a +a' \to b +b'$ to serve as their tensor product;
\item for any morphisms $f \maps a \to b$ and $g \maps b \to c$ we include a morphism $g\circ f \maps a \to c$ to serve as their composite.
\end{itemize}
Then we mod out by an equivalence relation that imposes the laws of a commutative monoidal category, obtaining the commutative monoid $\Mor(FP)$.  The rest of the category structure on $F P$ is straightforward.	

We can extend $F$ to a functor from Petri nets to commutative monoidal categories.   Indeed, there is a category $\Petri$ where Petri nets are objects and a morphism from the Petri net $s, t \maps T \to \N[S]$ to the Petri net $s', t' \maps T' \to \N[S']$ is a pair of functions $f \maps S \to S', g \maps T \to T'$ such that the following diagrams commute:
\[
\scalebox{1.2}{
\begin{tikzpicture}[scale=1.2]
\node (A) at (0,0) {$T$};
\node (B) at (1,0) {$\N[S]$};
\node (C) at (0,-1) {$T'$};
\node (D) at (1,-1) {$\N[S']$};
\path[->,font=\scriptsize,>=angle 90]
(A) edge node[above]{$s$} (B)
(A) edge node[left]{$f$} (C)
(B) edge node[right]{$\N[g]$} (D)
(C) edge node[below]{$s'$} (D);
\end{tikzpicture}
}
\qquad
\scalebox{1.2}{
\begin{tikzpicture}[scale=1.2]
\node (A) at (0,0) {$T$};
\node (B) at (1,0) {$\N[S]$};
\node (C) at (0,-1) {$T'$};
\node (D) at (1,-1) {$\N[S']$,};
\path[->,font=\scriptsize,>=angle 90]
(A) edge node[above]{$t$} (B)
(A) edge node[left]{$f$} (C)
(B) edge node[right]{$\N[g]$} (D)
(C) edge node[below]{$t'$} (D);
\end{tikzpicture}
}
\]
where  $\mathbb{N}[g]$ acts to map any finite sum $\sum_i \sigma_i$ with $\sigma_i \in S$ to 
$\sum_i g(\sigma_i)$.   For example, there is a morphism from this Petri net:
\[
\begin{tikzpicture}
	\begin{pgfonlayer}{nodelayer}
\node [style=species] (C) at (-1, 0) {$C$};
\node [style=species] (B) at (-4, -0.5) {$B$};
		\node [style=species] (A) at (-4, 0.5) {$A$};
		\node [style=transition] (tau1) at (-2.5, 0.6) {$\alpha$};
             \node [style=transition] (tau2) at (-2.5,-0.7) {$\beta$};
	\end{pgfonlayer}
	\begin{pgfonlayer}{edgelayer}
		\draw [style=inarrow] (A) to (tau1);
		\draw [style=inarrow] (B) to (tau1);
		\draw [style=inarrow, bend left=15, looseness=1.00] (tau1) to (C);
	       \draw [style=inarrow, bend left=15, looseness=1.00] (C) to (tau2);
             \draw [style=inarrow, bend left=15, looseness=1.00] (tau2) to (B); 
            \draw [style=inarrow, bend right =15, looseness=1.00] (tau2) to (B); 
	\end{pgfonlayer}
\end{tikzpicture}
\]
to this one:
\[
\begin{tikzpicture}
	\begin{pgfonlayer}{nodelayer}
\node [style=species] (C) at (-1, 0) {$C$};
\node [style=species] (B) at (-4, -0.5) {$B$};
		\node [style=species] (A) at (-4, 0.5) {$A$};
		\node [style=transition] (tau) at (-2.5, 0) {$\gamma$};
	\end{pgfonlayer}
	\begin{pgfonlayer}{edgelayer}
		\draw [style=inarrow, bend left = 10, looseness=1.0] (A) to (tau);
		\draw [style=inarrow, bend left = 30, looseness=1.0] (B) to (tau);
		\draw [style=inarrow, bend left=15, looseness=1.00] (tau) to (C);
	       \draw [style=inarrow, bend left=15, looseness=1.00] (C) to (tau);
             \draw [style=inarrow, bend left=0, looseness=1.00] (tau) to (B); 
            \draw [style=inarrow, bend left =30, looseness=1.00] (tau) to (B); 
	\end{pgfonlayer}
\end{tikzpicture}
\]
mapping $\alpha$ and $\beta$ to $\gamma$ and acting as the identity on the places $A,B$ and $C$.   Then, there is a functor 
\[    F \maps \Petri \to \CMC \]
sending any Petri net $P$ to the free commutative monoidal category $FP$ that we have already described.  Moreover, this is a left adjoint \cite[Thm.\ 5.1]{Master}. 
	
We should think of $F$ as a `semantics' for Petri nets, 	saying what they can `mean'.   In this semantics
the `meaning' of a Petri net $P$ is the free commutative monoidal category $FP$ describing how tokens 
can move around from place to place.   Thus we call $F$ the \define{token semantics}.
	
\subsection{Open Petri nets}
\label{subsec:open_petri}

We now turn to `open' Petri nets, and construct a double category whose loose morphisms are
open Petri nets.  This lets us build Petri nets out of smaller open parts, which allows us to study Petri nets and their semantics compositionally.   For example, here is a picture of an open Petri net from a finite set $X$ to a finite set $Y$:
\[
\begin{tikzpicture}
	\begin{pgfonlayer}{nodelayer}
		\node [style=species] (A) at (-4, 0.5) {$A$};
		\node [style=species] (B) at (-4, -0.5) {$B$};
		\node [style=species] (C) at (-1, 0.5) {$C$};
		\node [style=species] (D) at (-1, -0.5) {$D$};
             \node [style=transition] (a) at (-2.5, 0) {$\alpha$}; 
		
		\node [style=empty] (X) at (-5.1, 1) {$X$};
		\node [style=none] (Xtr) at (-4.75, 0.75) {};
		\node [style=none] (Xbr) at (-4.75, -0.75) {};
		\node [style=none] (Xtl) at (-5.4, 0.75) {};
             \node [style=none] (Xbl) at (-5.4, -0.75) {};
	
		\node [style=inputdot] (1) at (-5, 0.5) {};
		\node [style=empty] at (-5.2, 0.5) {$1$};
		\node [style=inputdot] (2) at (-5, 0) {};
		\node [style=empty] at (-5.2, 0) {$2$};
		\node [style=inputdot] (3) at (-5, -0.5) {};
		\node [style=empty] at (-5.2, -0.5) {$3$};

		\node [style=empty] (Y) at (0.1, 1) {$Y$};
		\node [style=none] (Ytr) at (.4, 0.75) {};
		\node [style=none] (Ytl) at (-.25, 0.75) {};
		\node [style=none] (Ybr) at (.4, -0.75) {};
		\node [style=none] (Ybl) at (-.25, -0.75) {};

		\node [style=inputdot] (4) at (0, 0.5) {};
		\node [style=empty] at (0.2, 0.5) {$4$};
		\node [style=inputdot] (5) at (0, -0.5) {};
		\node [style=empty] at (0.2, -0.5) {$5$};		
		
		
	\end{pgfonlayer}
	\begin{pgfonlayer}{edgelayer}
		\draw [style=inarrow] (A) to (a);
		\draw [style=inarrow] (B) to (a);
		\draw [style=inarrow] (a) to (C);
		\draw [style=inarrow] (a) to (D);
		\draw [style=inputarrow] (1) to (A);
		\draw [style=inputarrow] (2) to (B);
		\draw [style=inputarrow] (3) to (B);
		\draw [style=inputarrow] (4) to (C);
		\draw [style=inputarrow] (5) to (D);
		\draw [style=simple] (Xtl.center) to (Xtr.center);
		\draw [style=simple] (Xtr.center) to (Xbr.center);
		\draw [style=simple] (Xbr.center) to (Xbl.center);
		\draw [style=simple] (Xbl.center) to (Xtl.center);
		\draw [style=simple] (Ytl.center) to (Ytr.center);
		\draw [style=simple] (Ytr.center) to (Ybr.center);
		\draw [style=simple] (Ybr.center) to (Ybl.center);
		\draw [style=simple] (Ybl.center) to (Ytl.center);
	\end{pgfonlayer}
\end{tikzpicture}
\]
We shall think of it as a loose morphism in a double category and write it as $P \maps X \slashedrightarrow Y$ for short.   Given another open Petri net $Q \maps Y \slashedrightarrow Z$:
\[
\begin{tikzpicture}
	\begin{pgfonlayer}{nodelayer}

		\node [style = transition] (b) at (2.5, 1) {$\beta$};
		\node [style = transition] (c) at (2.5, -1) {$\gamma$};
		\node [style = species] (E) at (1, 0) {$E$};
		\node [style = species] (F) at (4,0) {$F$};

		\node [style=empty] (Y) at (-0.1, 1) {$Y$};
		\node [style=none] (Ytr) at (.25, 0.75) {};
		\node [style=none] (Ytl) at (-.4, 0.75) {};
		\node [style=none] (Ybr) at (.25, -0.75) {};
		\node [style=none] (Ybl) at (-.4, -0.75) {};

		\node [style=inputdot] (4) at (0, 0.5) {};
		\node [style=empty] at (-0.2, 0.5) {$4$};
		\node [style=inputdot] (5) at (0, -0.5) {};
		\node [style=empty] at (-0.2, -0.5) {$5$};		
		
		\node [style=empty] (Z) at (5, 1) {$Z$};
		\node [style=none] (Ztr) at (4.75, 0.75) {};
		\node [style=none] (Ztl) at (5.4, 0.75) {};
		\node [style=none] (Zbl) at (5.4, -0.75) {};
		\node [style=none] (Zbr) at (4.75, -0.75) {};

		\node [style=inputdot] (6) at (5, 0) {};
		\node [style=empty] at (5.2, 0) {$6$};	
		
	\end{pgfonlayer}
	\begin{pgfonlayer}{edgelayer}
		\draw [style=inarrow, bend left=30, looseness=1.00] (E) to (b);
		\draw [style=inarrow, bend left=30, looseness=1.00] (b) to (F);
		\draw [style=inarrow, bend left=30, looseness=1.00] (c) to (E);
		\draw [style=inarrow, bend left=30, looseness=1.00] (F) to (c);
		\draw [style=inputarrow] (4) to (E);
		\draw [style=inputarrow] (5) to (E);
		\draw [style=inputarrow] (6) to (F);
		\draw [style=simple] (Ytl.center) to (Ytr.center);
		\draw [style=simple] (Ytr.center) to (Ybr.center);
		\draw [style=simple] (Ybr.center) to (Ybl.center);
		\draw [style=simple] (Ybl.center) to (Ytl.center);
		\draw [style=simple] (Ztl.center) to (Ztr.center);
		\draw [style=simple] (Ztr.center) to (Zbr.center);
		\draw [style=simple] (Zbr.center) to (Zbl.center);
		\draw [style=simple] (Zbl.center) to (Ztl.center);
	\end{pgfonlayer}
\end{tikzpicture}
\]
we can compose them and get an open Petri net $Q \circ P \maps X \slashedrightarrow Z$:
\[
\begin{tikzpicture}
	\begin{pgfonlayer}{nodelayer}
		\node [style=species] (A) at (-4, 0.5) {$A$};
		\node [style=species] (B) at (-4, -0.5) {$B$};;
             \node [style=transition] (a) at (-2.5, 0) {$\alpha$}; 
		\node [style = species] (E) at (-1, 0) {$C$};
		\node [style = species] (F) at (2,0) {$F$};

	     \node [style = transition] (b) at (.5, 1) {$\beta$};
		\node [style = transition] (c) at (.5, -1) {$\gamma$};
		
		\node [style=empty] (X) at (-5.1, 1) {$X$};
		\node [style=none] (Xtr) at (-4.75, 0.75) {};
		\node [style=none] (Xbr) at (-4.75, -0.75) {};
		\node [style=none] (Xtl) at (-5.4, 0.75) {};
             \node [style=none] (Xbl) at (-5.4, -0.75) {};
	
		\node [style=inputdot] (1) at (-5, 0.5) {};
		\node [style=empty] at (-5.2, 0.5) {$1$};
		\node [style=inputdot] (2) at (-5, 0) {};
		\node [style=empty] at (-5.2, 0) {$2$};
		\node [style=inputdot] (3) at (-5, -0.5) {};
		\node [style=empty] at (-5.2, -0.5) {$3$};	
		
		\node [style=empty] (Z) at (3, 1) {$Z$};
		\node [style=none] (Ztr) at (2.75, 0.75) {};
		\node [style=none] (Ztl) at (3.4, 0.75) {};
		\node [style=none] (Zbl) at (3.4, -0.75) {};
		\node [style=none] (Zbr) at (2.75, -0.75) {};

		\node [style=inputdot] (6) at (3, 0) {};
		\node [style=empty] at (3.2, 0) {$6$};	
		
	\end{pgfonlayer}
	\begin{pgfonlayer}{edgelayer}
		\draw [style=inarrow] (A) to (a);
		\draw [style=inarrow] (B) to (a);
	     \draw [style=inarrow, bend right=15, looseness=1.00] (a) to (E);
	     \draw [style=inarrow, bend left =15, looseness=1.00] (a) to (E);	
	     	\draw [style=inarrow, bend left=30, looseness=1.00] (E) to (b);
		\draw [style=inarrow, bend left=30, looseness=1.00] (b) to (F);
		\draw [style=inarrow, bend left=30, looseness=1.00] (c) to (E);
		\draw [style=inarrow, bend left=30, looseness=1.00] (F) to (c);	
		\draw [style=inputarrow] (1) to (A);
		\draw [style=inputarrow] (2) to (B);
		\draw [style=inputarrow] (3) to (B);
		\draw [style=inputarrow] (6) to (F);
		\draw [style=simple] (Xtl.center) to (Xtr.center);
		\draw [style=simple] (Xtr.center) to (Xbr.center);
		\draw [style=simple] (Xbr.center) to (Xbl.center);
		\draw [style=simple] (Xbl.center) to (Xtl.center);
		\draw [style=simple] (Ztl.center) to (Ztr.center);
		\draw [style=simple] (Ztr.center) to (Zbr.center);
		\draw [style=simple] (Zbr.center) to (Zbl.center);
		\draw [style=simple] (Zbl.center) to (Ztl.center);
	\end{pgfonlayer}
\end{tikzpicture}
\]

To formalize this, first note that there is a functor $R \maps \Petri \to \Fin\Set$ sending any Petri net to its set of places.   This has a left adjoint $L \maps \Fin\Set \to \Petri$ sending any finite set $S$ to the Petri net with $S$ as its set of places and no transitions \cite[Lem.\ 11]{BM}.   Since both $\Fin\Set$ and $\Petri$ have finite colimits and $L$ preserves them, Theorem \ref{thm:structured_cospan_2} yields a symmetric monoidal double category ${}_L \lCsp$ in which:
\begin{itemize}
\item an object is a finite set,
\item a tight morphism is a function,
\item a loose morphism is an \define{open Petri net}, meaning a cospan in $\Petri$ of this form:
\[
\scalebox{1.2}{
\begin{tikzpicture}[scale=1.0]
\node (A) at (0,0) {$L(X)$};
\node (B) at (1,1) {$P$};
\node (C) at (2,0) {$L(Y)$};
\path[->,font=\scriptsize,>=angle 90]
(A) edge node[above,pos=0.3]{$i$} (B)
(C)edge node[above,pos=0.3]{$o$}(B);
\end{tikzpicture}
}
\]
\item a 2-cell is a \define{map of open Petri nets}, meaning a commutative diagram in $\Petri$ of this form:
\[
\scalebox{1.2}{
\begin{tikzpicture}[scale=1.2]
\node (E) at (3,0) {$L(X)$};
\node (F) at (5,0) {$L(Y)$};
\node (G) at (4,0) {$P$};
\node (E') at (3,-1) {$L(X')$};
\node (F') at (5,-1) {$L(Y')$.};
\node (G') at (4,-1) {$P'$};
\path[->,font=\scriptsize,>=angle 90]
(F) edge node[above]{$o$} (G)
(E) edge node[left]{$L(f)$} (E')
(F) edge node[right]{$L(g)$} (F')
(G) edge node[left]{$\alpha$} (G')
(E) edge node[above]{$i$} (G)
(E') edge node[below]{$i'$} (G')
(F') edge node[below]{$o'$} (G');
\end{tikzpicture}
}
\]
\end{itemize}
To be more descriptive we call this double category $\lOpen(\Petri)$.

We can equivalently describe open Petri nets using decorated cospans.   There is a symmetric lax monoidal pseudofunctor $F \maps (\Fin\Set, +) \to (\Cat, \times)$ such that for any finite set $S$, the category $F(S)$ has:
\begin{itemize}
\item objects given by Petri nets whose set of places is $S$,
\item morphisms given by morphisms of Petri nets that are the identity on the set of places.
\end{itemize}
By Theorem \ref{thm:decorated_cospan_2} this gives a symmetric monoidal double category $F \lCsp$.  Using Theorem \ref{thm:equivalence_of_structured_and_decorated} we can show that $F \lCsp$ is isomorphic, as a symmetric monoidal double category, to $\lOpen(\Petri)$.   However, it seems simpler to work with open Petri nets using structured cospans.   We begin by using them to define a semantics for open Petri nets.  This illustrates a general method for constructing double functors between structured cospan double categories.

\subsection{The token semantics for open Petri nets}

We have described a `token semantics' mapping Petri nets to commutative monoidal categories.   Now we would like to go further and extend this to a semantics for \emph{open} Petri nets.  This should send open Petri nets to `open commutative monoidal categories'.

To define these, we use the left adjoint functor $L' \maps \Set \to \CMC$ sending any set $X$ to the free commutative monoidal category on this set, which has $\N[X]$ as its set of objects, and only identity morphisms.    
The category $\CMC$ is cocomplete \cite[Thm.\ 16]{BM}.   Thus, all the machinery is in place to use Theorem \ref{thm:structured_cospan_2} to define a symmetric monoidal double category ${}_{L'}\lCsp$ where:
\begin{itemize}
\item an object is a set,
\item a tight morphism is a function,
\item a loose morphism is an \define{open commutative monoidal category}, that is, a cospan in $\CMC$ of the form
\[
\scalebox{1.2}{
\begin{tikzpicture}[scale=1.0]
\node (A) at (0,0) {$L'(X)$};
\node (B) at (1,1) {$C$};
\node (C) at (2,0) {$L'(Y)$,};
\path[->,font=\scriptsize,>=angle 90]
(A) edge node[above,pos=0.3]{$i$} (B)
(C)edge node[above,pos=0.3]{$o$}(B);
\end{tikzpicture}
}
\]
\item a 2-cell is a \define{map of open commutative monoidal categories}, that is, a commutative diagram in $\CMC$ of the form
\[
\scalebox{1.2}{
\begin{tikzpicture}[scale=1.2]
\node (A) at (0,0) {$L'(X)$};
\node (B) at (1,0) {$C$};
\node (C) at (2,0) {$L'(Y)$};
\node (A') at (0,-1) {$L'(X')$};
\node (B') at (1,-1) {$C'$};
\node (C') at (2,-1) {$L'(Y')$.};
\path[->,font=\scriptsize,>=angle 90]
(A) edge node[above]{$i$} (B)
(C) edge node[above]{$o$} (B)
(A') edge node[below]{$i'$} (B')
(C') edge node[below]{$o'$} (B')
(A) edge node [left]{$L'(f)$} (A')
(B) edge node [left]{$\alpha$} (B')
(C) edge node [right]{$L'(g)$} (C');
\end{tikzpicture}
}
\]
\end{itemize}
To be more descriptive we call this double category $\lOpen(\CMC)$.

Next, we want to parlay the operational semantics for Petri nets
\[    F \maps \Petri \to \CMC \]
into an operational semantics for open Petri nets, which should be a symmetric monoidal double functor 
\[   \lOpen(F) \maps \lOpen(\Petri) \to \lOpen(\CMC) .\]
To do this, we can use a general method for constructing double functors between structured cospan double categories:

\begin{thm}
\label{thm:structured_cospan_functoriality_1}
Suppose we have a square in $\Cat$:
\[
\scalebox{1.2}{
\begin{tikzpicture}[scale=1.2]
\node (A) at (0,0) {$\A$};
\node (B) at (1,0) {$\X$};
\node (C) at (0,-1) {$\A'$};
\node (D) at (1,-1) {$\X'$};
\node (E) at (0.5,-0.5) {$\Downarrow \alpha$};
\path[->,font=\scriptsize,>=angle 90]
(A) edge node[above]{$L$} (B)
(B) edge node [right]{$F_1$} (D)
(C) edge node [below] {$L'$} (D)
(A)edge node[left]{$F_0$}(C);
\end{tikzpicture}
}
\] 
where $\X$ and $\X'$ have pushouts, $F_1$ preserves pushouts and $\alpha$ is a natural isomorphism. 
Then there is a double functor $\lF \maps {}_L \lCsp \to {}_{L'} \lCsp$ such that
\begin{itemize}
\item on objects and tight morphisms, $\lF$ acts as $F_0$,
\item $\lF$ sends any loose morphism
\[
\scalebox{1.2}{
\begin{tikzpicture}[scale=1.5]
\node (A) at (0,0) {$L(a)$};
\node (B) at (0.75,0) {$x$};
\node (C) at (1.5,0) {$L(b)$};
\path[->,font=\scriptsize,>=angle 90]
(A) edge node[above]{$i$} (B)
(C)edge node[above]{$o$}(B);
\end{tikzpicture}
}
\]
to
\[
\scalebox{1.2}{
\begin{tikzpicture}[scale=1.5]
\node (A) at (0,0) {$L'(F_0(a))$};
\node (B) at (1.5,0) {$F_1(x)$};
\node (C) at (3,0) {$L'(F_0(b))$};
\path[->,font=\scriptsize,>=angle 90]
(A) edge node[above]{$ F_1(i) \alpha_a$} (B)
(C)edge node[above]{$ F_1(o)\alpha_b $}(B);
\end{tikzpicture}
}
\]
\item $\lF$ sends any 2-cell
\[
\scalebox{1.2}{
\begin{tikzpicture}[scale=1.5]
\node (A) at (0,0) {$L(a)$};
\node (B) at (0.75,0) {$x$};
\node (C) at (1.5,0) {$L(b)$};
\node (A') at (0,-0.8) {$L(a')$};
\node (B') at (0.75,-0.8) {$x'$};
\node (C') at (1.5,-0.8) {$L(b')$};
\path[->,font=\scriptsize,>=angle 90]
(A) edge node[above]{$i$} (B)
(C)edge node[above]{$o$}(B)
(A') edge node[below]{$i'$} (B')
(C')edge node[below]{$o'$}(B')
(C)edge node[right]{$L(g)$}(C')
(B)edge node[left]{$\gamma$}(B')
(A)edge node[left]{$L(f)$}(A');
\end{tikzpicture}
}
\]
to
\[
\scalebox{1.2}{
\begin{tikzpicture}[scale=1.5]
\node (A) at (0,0) {$L'(F_0(a))$};
\node (B) at (1.5,0) {$F_1(x)$};
\node (C) at (3,0) {$L'(F_0(b))$};
\node (A') at (0,-1) {$L'(F_0(a'))$};
\node (B') at (1.5,-1) {$F_1(x')$};
\node (C') at (3,-1) {$L'(F_0(b'))$};
\path[->,font=\scriptsize,>=angle 90]
(A) edge node[above]{$ F_1(i) \alpha_a$} (B)
(C)edge node[above]{$ F_1(o)\alpha_b $}(B)
(A') edge node[below]{$ F_1(i') \alpha_{a'}$} (B')
(C')edge node[below]{$ F_1(o')\alpha_{b'}$}(B')
(A)edge node[left]{$ L'(F_0(f)) $}(A')
(B)edge node[left]{$ F_1(\gamma)$}(B')
(C)edge node[right]{$L'(F_0(g))$}(C');
\end{tikzpicture}
}
\]
\end{itemize}
\end{thm}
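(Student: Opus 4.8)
The plan is to define $\lF$ on the four kinds of cells exactly as the statement prescribes and then to check, in turn, that the assignments land in ${}_{L'}\lCsp$, that they respect sources, targets and all the strictly-defined structure, and that $\lF$ carries globular comparison cells making it a pseudo double functor. First, well-definedness: given a loose morphism $L(a)\xrightarrow{i}x\xleftarrow{o}L(b)$ of ${}_L\lCsp$, its prospective image $L'(F_0 a)\xrightarrow{F_1(i)\alpha_a}F_1 x\xleftarrow{F_1(o)\alpha_b}L'(F_0 b)$ is a genuine $L'$-structured cospan, since its apex lies in $\X'$ and its feet are $L'$ applied to the objects $F_0 a,F_0 b$ of $\A'$. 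For a 2-cell of ${}_L\lCsp$, commutativity of the image square reduces, leg by leg, to applying $F_1$ to the original commuting square and pasting in a naturality square of $\alpha$, e.g.\ the equation $F_1(Lf)\,\alpha_a=\alpha_{a'}\,L'(F_0 f)$; so $\lF$ is a well-defined assignment on cells.

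Next, the strict structure. On objects and tight morphisms $\lF$ is literally the functor $F_0$, which gives a functor $\lF_0$ between the categories of objects and tight morphisms. On the category $({}_L\lCsp)_1$ of loose morphisms and 2-cells, vertical composition and vertical identities are computed componentwise --- by composition in $\A$ on the tight legs and in $\X$ on the apex map --- and $\lF_1$ applies $F_0$ and $F_1$ respectively to these components, hence is a functor. Compatibility with the horizontal source and target, and with the strictly associative tight composition, is immediate from the formulas. So $\lF$ is compatible with all the strictly-defined structure; what remains is to handle loose composition and loose units, which are defined only up to coherent isomorphism.

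The substance is thus the comparison cells. Loose composition in ${}_L\lCsp$ is performed by pushout in $\X$: the composite of loose morphisms with apexes $x$ and $y$ over $L(b)$ has apex $x+_{L(b)}y$. Applying $F_1$, and using that it preserves pushouts together with the isomorphism $\alpha_b\maps L'(F_0 b)\to F_1 L(b)$, we see that $F_1(x+_{L(b)}y)$ is canonically a pushout of $F_1 x\leftarrow L'(F_0 b)\to F_1 y$, hence canonically isomorphic to the apex $F_1 x +_{L'(F_0 b)} F_1 y$ of $\lF(M)\odot\lF(N)$; the universal property shows this canonical isomorphism respects the outer legs, yielding a globular isomorphism $\lF(M)\odot\lF(N)\xrightarrow{\sim}\lF(M\odot N)$. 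For loose units, the identity loose morphism on $a$ has apex $L(a)$, so $\lF$ sends it to the cospan with apex $F_1 L(a)$ and legs $\alpha_a$, while the loose unit on $F_0 a$ has apex $L'(F_0 a)$; the comparison is $\alpha_a$ itself.

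Finally, one verifies the pseudo double functor coherence axioms: naturality of the compositor in 2-cells, the two unit triangles, and the associativity coherence relating the compositors built from $M$, $N$, $P$ to the associators of loose composition on either side. Every map occurring in these diagrams is a canonical comparison between (iterated) pushouts assembled from $F_1$ applied to pushouts of $\X$, from $\alpha$, and from the structure isomorphisms of $\X'$, and any two parallel leg-respecting maps of this kind agree by the universal property of pushouts; so each coherence diagram commutes on the nose. I expect this last bookkeeping of iterated pushouts to be the main (though routine) obstacle --- it is essentially the same computation that shows ${}_L\lCsp$ is a double category in the first place, now transported along $F_1$. One can also organize the argument more conceptually by exhibiting ${}_L\lCsp$ as a base change of the cospan double category $\lCsp(\X)$ along $L$ and observing that the displayed square with $\alpha$ invertible is a morphism of such data, so that functoriality of the construction produces $\lF$ together with its coherences automatically.
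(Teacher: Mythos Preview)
Your proposal is correct and supplies exactly the kind of argument one expects: defining $\lF$ on cells as prescribed, checking well-definedness via naturality of $\alpha$, constructing the compositor from the canonical isomorphism $F_1(x+_{L(b)}y)\cong F_1 x+_{L'(F_0 b)}F_1 y$ (using that $F_1$ preserves pushouts and $\alpha_b$ is invertible), taking the unitor to be $\alpha_a$, and verifying the coherence axioms by the universal property of pushouts.

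The paper's own proof, however, consists of nothing more than a citation to \cite[Thm.\ 4.2]{BC}, together with the remark that a double functor involves extra structure (the compositor and unitor) which that reference spells out. So you have not so much taken a different route as actually \emph{written out} the argument that the paper defers to its reference; your sketch is essentially what one finds in \cite{BC}. Your closing remark about organizing the construction as a base change of $\lCsp(\X)$ along $L$ is a nice conceptual alternative that goes slightly beyond what either the paper or its reference does explicitly.
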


\begin{proof} 
This is \cite[Thm.\ 4.2]{BC}.  A double functor involves extra structure besides that mentioned above, and the theorem describes that extra structure for $\lF$.
\end{proof}

As usual, this result has an enhancement that covers the symmetric monoidal case.  This is most easily stated using $\Rex$, the 2-category of categories with finite colimits introduced in Section \ref{subsec:structured_versus_decorated_cospans}.

\begin{thm}
\label{thm:structured_cospan_functoriality_2}
Suppose we have a square in $\Rex$:
\[
\scalebox{1.2}{
\begin{tikzpicture}[scale=1.2]
\node (A) at (0,0) {$\A$};
\node (B) at (1,0) {$\X$};
\node (C) at (0,-1) {$\A'$};
\node (D) at (1,-1) {$\X'$};
\node (E) at (0.5,-0.5) {$\Downarrow \alpha$};
\path[->,font=\scriptsize,>=angle 90]
(A) edge node[above]{$L$} (B)
(B) edge node [right]{$F_1$} (D)
(C) edge node [below] {$L'$} (D)
(A)edge node[left]{$F_0$}(C);
\end{tikzpicture}
}
\] 
Then the double categories ${}_L \lCsp$ and ${}_{L'} \lCsp$ become symmetric monoidal as in 
Theorem \ref{thm:structured_cospan_2}, and the double functor $\lF \maps {}_L \lCsp \to {}_{L'} \lCsp$ 
is symmetric monoidal.
\end{thm}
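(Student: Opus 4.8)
The plan is to take the double functor $\lF \maps {}_L\lCsp \to {}_{L'}\lCsp$ produced by Theorem~\ref{thm:structured_cospan_functoriality_1} (for which, as there, we take $\alpha$ to be invertible) and equip it with the comparison data of a symmetric monoidal double functor, then check the coherence axioms. Recall from Theorem~\ref{thm:structured_cospan_2} that the monoidal structure on ${}_L\lCsp$ is built entirely from finite coproducts: the tensor of objects $a,b$ is $a+b$ in $\A$, the tensor of loose morphisms with apices $x,y$ has apex $x+y$ in $\X$, the monoidal unit is the initial object, and the associator, unitors and braiding are the canonical coproduct isomorphisms; likewise for ${}_{L'}\lCsp$ using coproducts in $\A'$ and $\X'$. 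Since the square $(L,L',F_0,F_1,\alpha)$ lives in $\Rex$, each of $F_0\maps\A\to\A'$ and $F_1\maps\X\to\X'$ preserves finite coproducts (and the initial object), hence carries canonical natural isomorphisms $F_0(a)+F_0(a')\iso F_0(a+a')$, $0\iso F_0(0)$, and similarly for $F_1$; moreover $L$ and $L'$ preserve coproducts and $\alpha$ is a natural transformation between the coproduct-preserving functors $L'F_0$ and $F_1L$.

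Next I would assemble the comparison cells of $\lF$. On objects, the comparison $\lF(a)\otimes\lF(b)\to\lF(a\otimes b)$ is the tight morphism $F_0(a)+F_0(b)\iso F_0(a+b)$, and the unit comparison is $0\iso F_0(0)$. On loose morphisms, given structured cospans $P$ (apex $x$, feet $a,b$) and $Q$ (apex $x'$, feet $a',b'$), both $\lF(P)\otimes\lF(Q)$ and $\lF(P\otimes Q)$ are cospans over $\X'$ with apices $F_1(x)+F_1(x')$ and $F_1(x+x')$ and feet $F_0(a)+F_0(a'),F_0(b)+F_0(b')$ versus $F_0(a+b),F_0(b+b')$; the canonical isomorphisms $F_1(x)+F_1(x')\iso F_1(x+x')$ on apices and $F_0(a)+F_0(a')\iso F_0(a+a')$ on feet assemble into the required comparison $2$-cell. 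One checks it commutes with the two legs by combining naturality of the coproduct-preservation isomorphisms of $F_0$ and $F_1$ with naturality of $\alpha$: the legs of $\lF(P)$ have the form $F_1(i)\alpha_a$, so the square in question factors as a naturality square for $\alpha$ stacked on a naturality square for the $F_1$-comparison. The same formula, applied to the domain and codomain cospans of a $2$-cell, gives the comparison on $2$-cells, and functoriality of the coproduct makes this a natural family.

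Then I would verify the axioms of a symmetric monoidal double functor: compatibility of the comparison cells with the associators, unitors and braidings of the two monoidal double categories, and compatibility with the structural $2$-cells of $\lF$ itself for loose composition and loose identities (these latter cells, from Theorem~\ref{thm:structured_cospan_functoriality_1}, are built from preservation of pushouts by $F_1$). On the monoidal side, everything reduces to the standard facts that a finite-coproduct-preserving functor between categories with finite coproducts is symmetric monoidal in a unique coherent way, that a natural transformation between such functors is automatically monoidal, and that the coproduct-preservation isomorphisms intertwine the coproduct symmetries on source and target. On the side of interaction with $\lF$, one uses that in $\X'$ the relevant pushouts and finite coproducts may be computed compatibly and that $F_1$ preserves both, so the canonical pushout isomorphisms and the canonical coproduct isomorphisms commute past one another; this is the one genuinely double-categorical diagram chase.

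The main obstacle is organizational rather than conceptual: a symmetric monoidal pseudo double functor carries a sizable collection of comparison cells, each subject to coherence conditions, so one must produce all of them and check a correspondingly long list of commuting diagrams. The cleanest route is to observe that \emph{every} isomorphism appearing above is a canonical map determined by preservation of finite coproducts and pushouts together with naturality of $\alpha$, so that each required diagram commutes by uniqueness of canonical maps; alternatively, one can package the input data $(\A\xto{L}\X)$ into an appropriate $2$-category lying over $\Rex$ and upgrade the assignment $(\A\xto{L}\X)\mapsto{}_L\lCsp$ to a $2$-functor into symmetric monoidal double categories, whereupon the symmetric monoidal structure on $\lF$ is produced automatically, though establishing that $2$-functoriality requires essentially the same bookkeeping.
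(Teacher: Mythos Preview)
Your outline is correct and matches the standard argument: the comparison data are the canonical coproduct-preservation isomorphisms of $F_0$ and $F_1$, and all coherence diagrams commute because every map involved is a universal-property isomorphism. The paper itself does not give this argument but simply cites \cite[Thm.\ 4.3]{BC}, where essentially the proof you sketch is carried out in detail.
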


\begin{proof} 
This is \cite[Thm.\ 4.3]{BC}.  For a double functor to be symmetric monoidal is not just a property: it involves extra structure, and the theorem describes this extra structure.
\end{proof}

We can apply these results if we note that the free commutative monoidal category on the free Petri net on a finite set $S$ is naturally isomorphic to the free commutative monoidal category on $S$, so this
square commutes up to some natural isomorphism $\alpha$:
\[
\scalebox{1.2}{
\begin{tikzpicture}[scale=1.2]
\node (A) at (0,0) {$\Fin\Set$};
\node (B) at (1.2,0) {$\Petri$};
\node (C) at (0,-1) {$\Fin\Set$};
\node (D) at (1.2,-1) {$\CMC$};
\node (E) at (0.5,-0.5) {$\Downarrow \alpha$};
\path[->,font=\scriptsize,>=angle 90]
(A) edge node[above]{$L$} (B)
(B) edge node [right]{$F$} (D)
(C) edge node [below] {$L'$} (D)
(A)edge node[left]{$1$}(C);
\end{tikzpicture}
}
\] 
Furthermore this is a square in $\Rex$.  We thus obtain a symmetric monoidal double functor
$\lF \maps {}_L \lCsp \to {}_{L'} \lCsp$, which we call
\[   \lOpen(F) \maps \lOpen(\Petri) \to \lOpen(\CMC) .\]
This double functor is the token semantics for \emph{open} Petri nets.

\subsection{The uses of 2-cells}

As mentioned, one rationale for the double category approach to open systems is that the composition of these systems is associative only up to isomomorphism.  But there are other advantages to using a double category.   For example, the 2-cells can be used to describe maps from simple open systems to more complicated ones, and vice versa.

Here is a simple open Petri net which describes how an ionized hydrogen atom $\text{H}^+$ and a hydroxyl ion $\text{OH}^-$ combine to form water:
\[
\begin{tikzpicture}
	\begin{pgfonlayer}{nodelayer}
		\node [style=species] (A) at (-4, 0.5) {\scriptsize $\phantom{l}\text{H}^+\phantom{l}$};
		\node [style=species] (B) at (-4, -0.5) {\scriptsize $\text{OH}^-$};
		\node [style=species] (C) at (-1, 0) {\scriptsize $\text{H}_2\text{O}$};	
             \node [style=transition] (a) at (-2.5, 0) {$\phantom{|}\alpha\phantom{|}$}; 
		
		\node [style=empty] (X) at (-5.1, 1.2) {$X$};
		\node [style=none] (Xtr) at (-4.75, 0.95) {};
		\node [style=none] (Xbr) at (-4.75, -0.95) {};
		\node [style=none] (Xtl) at (-5.4, 0.95) {};
             \node [style=none] (Xbl) at (-5.4, -0.95) {};
	
		\node [style=inputdot] (1) at (-5, 0.5) {};
		\node [style=empty] at (-5.2, 0.5) {$1$};
		\node [style=inputdot] (2) at (-5, -0.5) {};
		\node [style=empty] at (-5.2, -0.5) {$2$};

		\node [style=empty] (Y) at (0.1, 1.2) {$Y$};
		\node [style=none] (Ytr) at (.4, 0.95) {};
		\node [style=none] (Ytl) at (-.25, 0.95) {};
		\node [style=none] (Ybr) at (.4, -0.95) {};
		\node [style=none] (Ybl) at (-.25, -0.95) {};

		\node [style=inputdot] (3) at (0, 0) {};
		\node [style=empty] at (0.2, 0) {$3$};		
		
		
	\end{pgfonlayer}
	\begin{pgfonlayer}{edgelayer}
		\draw [style=inarrow] (A) to (a);
		\draw [style=inarrow] (B) to (a);
		\draw [style=inarrow] (a) to (C);
		\draw [style=inputarrow] (1) to (A);
		\draw [style=inputarrow] (2) to (B);
		\draw [style=inputarrow] (3) to (C);
		\draw [style=simple] (Xtl.center) to (Xtr.center);
		\draw [style=simple] (Xtr.center) to (Xbr.center);
		\draw [style=simple] (Xbr.center) to (Xbl.center);
		\draw [style=simple] (Xbl.center) to (Xtl.center);
		\draw [style=simple] (Ytl.center) to (Ytr.center);
		\draw [style=simple] (Ytr.center) to (Ybr.center);
		\draw [style=simple] (Ybr.center) to (Ybl.center);
		\draw [style=simple] (Ybl.center) to (Ytl.center);
	\end{pgfonlayer}
\end{tikzpicture}
\]
There is an obvious `inclusion' 2-cell from the above Petri net to this larger one:
\[
\begin{tikzpicture}
	\begin{pgfonlayer}{nodelayer}
		\node [style=species] (A) at (-4, 0.5) {\scriptsize $\phantom{l}\text{H}^+\phantom{l}$};
		\node [style=species] (B) at (-4, -0.5) {\scriptsize $\text{OH}^-$};
		\node [style=species] (A') at (-4, -1.5) {\scriptsize $\phantom{l}\text{D}^+\phantom{l}$};
		\node [style=species] (B') at (-4, -2.5) {\scriptsize $\text{OD}^-$};
		\node [style=species] (C) at (-1, 0) {\scriptsize $\text{H}_2\text{O}$};	
		\node [style=species] (C') at (-1, -2) {\scriptsize $\text{D}_2\text{O}$};
                  \node [style=transition] (a) at (-2.5, 0) {$\alpha$}; 
                  \node [style=transition] (a') at (-2.5, -2) {$\alpha'$}; 
		
		\node [style=empty] (X) at (-5.1, 1.2) {$X'$};
		\node [style=none] (Xtr) at (-4.75, 0.95) {};
		\node [style=none] (Xbr) at (-4.75, -2.95) {};
		\node [style=none] (Xtl) at (-5.4, 0.95) {};
             \node [style=none] (Xbl) at (-5.4, -2.95) {};
	
		\node [style=inputdot] (1) at (-5, 0.5) {};
		\node [style=empty] at (-5.2, 0.5) {$1$};
		\node [style=inputdot] (2) at (-5, -0.5) {};
		\node [style=empty] at (-5.2, -0.5) {$2$};
		\node [style=inputdot] (1') at (-5, -1.5) {};
		\node [style=empty] at (-5.2, -1.5) {$1'$};
		\node [style=inputdot] (2') at (-5, -2.5) {};
		\node [style=empty] at (-5.2, -2.5) {$2'$};

		\node [style=empty] (Y) at (0.1, 1.2) {$Y'$};
		\node [style=none] (Ytr) at (.4, 0.95) {};
		\node [style=none] (Ytl) at (-.25, 0.95) {};
		\node [style=none] (Ybr) at (.4, -2.95) {};
		\node [style=none] (Ybl) at (-.25, -2.95) {};

		\node [style=inputdot] (3) at (0, 0) {};
		\node [style=empty] at (0.2, 0) {$3$};	
		\node [style=inputdot] (3') at (0, -2) {};
		\node [style=empty] at (0.2, -2) {$3'$};		
		
		
	\end{pgfonlayer}
	\begin{pgfonlayer}{edgelayer}
		\draw [style=inarrow] (A) to (a);
		\draw [style=inarrow] (B) to (a);
		\draw [style=inarrow] (a) to (C);
		\draw [style=inputarrow] (1) to (A);
		\draw [style=inputarrow] (2) to (B);
		\draw [style=inputarrow] (3) to (C);
		\draw [style=inarrow] (A') to (a');
		\draw [style=inarrow] (B') to (a');
		\draw [style=inarrow] (a') to (C');
		\draw [style=inputarrow] (1') to (A');
		\draw [style=inputarrow] (2') to (B');
		\draw [style=inputarrow] (3') to (C');
		\draw [style=simple] (Xtl.center) to (Xtr.center);
		\draw [style=simple] (Xtr.center) to (Xbr.center);
		\draw [style=simple] (Xbr.center) to (Xbl.center);
		\draw [style=simple] (Xbl.center) to (Xtl.center);
		\draw [style=simple] (Ytl.center) to (Ytr.center);
		\draw [style=simple] (Ytr.center) to (Ybr.center);
		\draw [style=simple] (Ybr.center) to (Ybl.center);
		\draw [style=simple] (Ybl.center) to (Ytl.center);
	\end{pgfonlayer}
\end{tikzpicture}
\]
Hydrogen has two stable isotopes, the usual one $\text{H}$ and a heavier one called deuterium, $\text{D}$.  In this second Petri net we are including a second reaction involving deuterium, which can create so-called `heavy water' $\text{D}_2\text{O}$.  

Conversely, there is a 2-cell from the second open Petri net to the first which forgets the difference between $\text{H}$ and $\text{D}$, maps the transitions $\alpha$ and $\alpha'$ to $\alpha$, and maps the sets $X'$ and $Y'$ to $X$ and $Y$ in the obvious way.   

This example may be too simple to be interesting to chemists, but it illustrates two key uses of 2-cells in structured cospan double categories:
\begin{itemize}
\item We can include a simpler model of an open system in a more complicated one.
\item We can project a more complicated model of an open system down to a simpler one. 
\end{itemize}
Both processes are important in modeling.   As we continue to change a model of an open system, either refining it or simplifying it, we should not have to treat each model we build as a separate, isolated
entity.   It is better, when possible, to keep track of 2-cells between them.  This lets us treat the history of the modeling process as a formal entity in its own right.

Furthermore, the category of loose morphisms and 2-cells between them may have pullbacks.   This is true, for example, of $\lOpen(\Petri)$.   These pullbacks let us build more complicated open systems from simpler ones, in a process that modelers---somewhat confusingly to mathematicians---call `stratification'.   To see how pullbacks have been used to stratify Petri nets in the sphere of public health modeling, see \cite{BFHLP}.   

\subsection{Whole-grain Petri nets}
\label{subsec:whole-grain}

There are various alternative notions of Petri net beside the one we have been using here \cite{BGMS}.   While it is a digression, we would be remiss not to mention Kock's `whole-grain Petri nets' \cite{Kock}, for two reasons.  First, when people implement open Petri nets using structured cospans in category-based software, they often use whole-grain Petri nets \cite{AlgebraicPetri,BFHLP}.  Second, while the Petri nets discussed so far present only free \emph{commutative} monoidal categories, whole-grain Petri nets have the ability to present free symmetric strict monoidal categories.

In the Petri nets discussed so far, the arrows between places and transitions have no real individuality: permuting them has no effect.    Thus, instead of drawing a \emph{finite set} of arrows from $B$ to $\tau$ or from $\tau$ to $C$ in this picture:
\[
\begin{tikzpicture}
	\begin{pgfonlayer}{nodelayer}
\node [style=species] (C) at (-1, 0) {$C$};
\node [style=species] (B) at (-4, -0.5) {$B$};
		\node [style=species] (A) at (-4, 0.5) {$A$};
		\node [style=transition] (tau) at (-2.5, 0) {$\tau$};
	\end{pgfonlayer}
	\begin{pgfonlayer}{edgelayer}
		\draw [style=inarrow, bend left = 10, looseness=1.0] (A) to (tau);
		\draw [style=inarrow, bend left = 30, looseness=1.0] (B) to (tau);
		 \draw [style=inarrow, bend left=0, looseness=1.00] (B) to (tau); 
                 \draw [style=inarrow, bend right =30, looseness=1.00] (B) to (tau); 
		\draw [style=inarrow, bend left=15, looseness=1.00] (tau) to (C);
	        \draw [style=inarrow, bend right=15, looseness=1.00] (tau) to (C);
	\end{pgfonlayer}
\end{tikzpicture}
\]
it would be more honest to put a \emph{natural number} on each arrow, like this:
\[
\begin{tikzpicture}
	\begin{pgfonlayer}{nodelayer}
\node [style=species] (C) at (-1, 0) {$C$};
\node [style=species] (B) at (-4, -0.5) {$B$};
\node [style=species] (A) at (-4, 0.5) {$A$};
\node [style=transition] (tau) at (-2.5, 0) {$\tau$};
\node at (-3.1,0.5){$1$};
\node at (-3.1,-0.5){$3$};
\node at (-1.7,0.2){$2$};
	\end{pgfonlayer}
	\begin{pgfonlayer}{edgelayer}
		\draw [style=inarrow] (A) to (tau);
		 \draw [style=inarrow] (B) to (tau); 
		\draw [style=inarrow] (tau) to (C);
	\end{pgfonlayer}
\end{tikzpicture}
\]
However, in a whole-grain Petri net there really is a finite set $I$ of arrows called \define{input arcs} going from places to transitions, and a finite set $O$ of \define{output arcs} going from transitions to places.   

More precisely, a \textbf{whole-grain Petri net} is a diagram of finite sets
  \[
\adjustbox{scale=1.2,center}{
    \begin{tikzcd}
      S & I \ar[l] \ar[r] & T & O \ar[l] \ar[r] & S.
    \end{tikzcd}
}
\]
 A morphism of whole-grain Petri nets, sometimes called an \define{etale map}, is a diagram
  \[
  \adjustbox{scale=1.2,center}{
    \begin{tikzcd}
      S \ar[d] & I \ar[l] \ar[r]\ar[d] \ar[dr,phantom,near start,"\lrcorner"] & T \ar[d] & O \ar[l] \ar[r]\ar[d] \ar[dl,phantom,near start,"\llcorner"] & S\ar[d]\\
      S' & I' \ar[l] \ar[r] & T' & O' \ar[l] \ar[r] & S'.
    \end{tikzcd}
}
  \]
Without the pullback conditions here, there would be a map from this whole-grain Petri net:
 \[
\begin{tikzpicture}
	\begin{pgfonlayer}{nodelayer}
\node [style=species] (C) at (-1, 0) {$C$};
\node [style=species] (B) at (-4, -0.5) {$B$};
		\node [style=species] (A) at (-4, 0.5) {$A$};
		\node [style=transition] (tau) at (-2.5, 0) {$\tau$};
	\end{pgfonlayer}
	\begin{pgfonlayer}{edgelayer}
		\draw [style=inarrow, bend left = 15, looseness=1.0] (A) to (tau);
		\draw [style=inarrow, bend left = 30, looseness=1.0] (B) to (tau);
		 \draw [style=inarrow, bend left=0, looseness=1.00] (B) to (tau); 
                 \draw [style=inarrow, bend right =30, looseness=1.00] (B) to (tau); 
		\draw [style=inarrow, bend left=15, looseness=1.00] (tau) to (C);
	        \draw [style=inarrow, bend right=15, looseness=1.00] (tau) to (C);
	\end{pgfonlayer}
\end{tikzpicture}
\]
to this one:
  \[
\begin{tikzpicture}
	\begin{pgfonlayer}{nodelayer}
\node [style=species] (C) at (-1, 0) {$C$};
\node [style=species] (B) at (-4, -0.5) {$B$};
		\node [style=species] (A) at (-4, 0.5) {$A$};
		\node [style=transition] (tau) at (-2.5, 0) {$\tau$};
	\end{pgfonlayer}
	\begin{pgfonlayer}{edgelayer}
		\draw [style=inarrow] (A) to (tau);
		 \draw [style=inarrow, bend left=0, looseness=1.00] (B) to (tau);
		\draw [style=inarrow] (tau) to (C);
	\end{pgfonlayer}
\end{tikzpicture}
\]

Just as there is a left adjoint functor
\[    F \maps \Petri \to \CMC, \]
there is a left adjoint
\[    F_\wg \maps \wg\Petri\to \SSMC \]
from the category $\wg\Petri$ of whole-grain Petri nets and etale maps to the category of
symmetric strict monoidal categories and symmetric strict monoidal functors.  
Just as the functor $F$ gives rise to a symmetric monoidal double functor 
\[   \lOpen(F) \maps \lOpen(\Petri) \to \lOpen(\CMC) ,\]
the functor $F_\wg$ gives a symmetric monoidal double functor
\[   \lOpen(F_\wg) \maps \lOpen(\Petri_\wg) \to \lOpen(\CMC_\wg). \]
Moreover, there is a square of symmetric monoidal double functors, commuting up to isomorphism:
\[
\scalebox{1.2}{
\begin{tikzpicture}[scale=1.5]
\node (A) at (0,0) {$\lOpen(\Petri_\wg)$};
\node (B) at (2,0) {$\lOpen(\SSMC)$};
\node (C) at (0,-1) {$\lOpen(\Petri)$};
\node (D) at (2,-1) {$\lOpen(\CMC)$};
\path[->,font=\scriptsize,>=angle 90]
(A) edge node[above]{$\lOpen(F)$} (B)
(B) edge node [right]{$\lOpen(G)$} (D)
(C) edge node [below] {$\lOpen(F_\wg)$} (D)
(A)edge node[left]{$\lOpen(H)$}(C);
\end{tikzpicture}
}
\] 
where $\lOpen(G)$ comes from a left adjoint functor $G \maps \Petri_\wg \to \Petri$, and
$\lOpen(H)$ comes from a left adjoint $H \maps \SSMC \to \CMC$.  For details, see \cite[Sec.\ 9]{BGMS}.

\section{Dynamical systems}
\label{sec:dynamical}

Next we consider a double category of open dynamical systems, defined using decorated cospans.   There are many kinds of dynamical system, but we shall discuss only one, as an example: a vector field $v$ on $\R^n$, which we can treat as simply a function $v \maps \R^n \to \R^n$.    This determines a first-order ordinary differential equation
\begin{equation}
\label{eq:diffeq_basic}     
\frac{d}{dt} x(t) = v(x(t))   
\end{equation}
which describes how a point $x(t) \in \R^n$ depends on time $t \in \R$.     Higher-order ordinary differential equations can be put into first-order form by including extra variables.   In physics we commonly treat $\R^n$ as the set of states of some system, and use a differential equation as above to describe how states evolve in time.  

More conceptually, we can replace $\R^n$ with $\R^S$ where $S$ is any finite set, whose elements we call  \define{variables}.   To each variable $a \in S$ we associate a real-valued function of time, say $[a] \maps \R \to \R$.   The function $x \maps \R \to \R^S$ contains the information in all these functions.    We can drop the brackets and use the same notation for the variables and their corresponding functions, but for expository purposes we wish to clarify the distinction between them.

To make this idea precise we should single out some class of vector fields: for example continuous, smooth, etc.    There is a lot of flexibility here.   If we restrict to smooth bounded vector fields, the differential equation will have a unique solution for all times for any initial data $x(0)$, and this solution $x(t)$ will be a smooth function of time.    However, in our applications to Petri nets in Section \ref{sec:Petri}, we need smooth vector fields that may not be bounded.   For the sake of specificity, we shall use these.  Beware: in this case, while the differential equation is guaranteed to have a smooth solution `locally in time' for any initial condition $x(0)$, the solution may shoot off to infinity and become undefined after a while.  

With these decisions made, for any finite set $S$ we define 
\[ D(S) = \{ v \maps \R^S \to \R^S | \; v \textrm{ is smooth}  \}. \]
We define a \define{dynamical system} to be a finite set $S$ together with an element of $D(S)$.   We then define an \define{open dynamical system} to be a cospan of finite sets where the apex, say $S$, is decorated by an element of $D(S)$:
\[
\scalebox{1.2}{
\begin{tikzpicture}
\node (A) at (0,0) {$X$};
\node (B) at (1,1) {$S$};
\node (C) at (2,0) {$Y$};
\node (D) at (3,1) {$v \in D(S)$};
\path[->,font=\scriptsize,>=angle 90]
(A) edge node[above,pos = 0.3]{$i$} (B)
(C) edge node[above, pos = 0.3]{$o$} (B);
\end{tikzpicture}
}
\]

How can we compose open dynamical systems?    Let us look at an example.   Consider this open dynamical system:
\[
\scalebox{1.2}{
\begin{tikzpicture}
\node (A) at (0,0) {$\{a\}$};
\node (B) at (1,1) {$\{a,b\}$};
\node (C) at (2,0) {$\{b\}$};
\node (E) at (3,1) {$v \in D(\R^{\{a,b\}})$};
\path[->,font=\scriptsize,>=angle 90]
(A) edge node[above,pos=0.3]{$i$} (B)
(C) edge node[above,pos=0.3]{$o$} (B);
\end{tikzpicture}
}
\]
where for simplicity the functions $i$ and $o$ map each variable to the like-named variable.   The dynamical system here specifies the differential equations
\begin{equation}
\label{eq:diffeq_1}
\begin{array}{ccl}
\displaystyle{\frac{d [a]}{d t}} &=& v_a([a], [b])  \\ [8 pt]
\displaystyle{\frac{d [b]}{d t}} &=& v_b([a], [b]) 
\end{array}
\end{equation}
where $[a], [b] \maps \R \to \R$ are the functions associated to the variable names $a, b \in X$. 

Suppose we compose the above open dynamical system with the following one:
\[
\scalebox{1.2}{
\begin{tikzpicture}[column sep={.4in,between origins}, row sep=.08in]
\node (A) at (0,0) {$\{b\}$};
\node (B) at (1,1) {$\{b,c\}$};
\node (C) at (2,0) {$\{c\}$};
\node (E) at (3,1) {$w\in D(\R^{\{b,c\}})$};
\path[->,font=\scriptsize,>=angle 90]
(A) edge node[above,pos = 0.3]{$i'$} (B)
(C) edge node[above,pos = 0.3]{$\; o'$} (B);
\end{tikzpicture}
}
\]
which describes the differential equations
\begin{equation}
\label{eq:diffeq_2}
\begin{array}{ccl}
\displaystyle{ \frac{d [b]}{d t} } &=& w_b([b],[c])  \\ [8 pt]
\displaystyle{ \frac{d [c]}{d t} } &=& w_c([b],[c]) 
\end{array}
\end{equation}
 Since we compose cospans by taking pushouts, the composite will be of the form 
 \[
 \scalebox{1.2}{
\begin{tikzpicture}[column sep={.4in,between origins}, row sep=.08in]
\node (A) at (0,0) {$\{a\}$};
\node (B) at (1,1) {$\{a,b,c\}$};
\node (C) at (2,0) {$\{c\}$};
\node (E) at (3,1) {$u \in D(\R^{\{a,b,c\}})$};
\path[->,font=\scriptsize,>=angle 90]
(A) edge node[above,pos=0.25]{$i$} (B)
(C) edge node[above,pos=0.25]{$\; o'$} (B);
\end{tikzpicture}
}
\]
where $u$ is some vector field.  

What should $u$ be?   We want to combine Equations \eqref{eq:diffeq_1} and \eqref{eq:diffeq_2} somehow, identifying the variable $b$ in the first set of equations with the like-named variable in the second set---not because they happen to have the same name, but because those variables get identified when taking the pushout.  Since we have two different equations describing $d[b]/dt$, we need to reconcile these somehow.   We do what a physicist would do, and \emph{add} the right-hand sides of these equations:
\begin{equation}
\label{eq:diffeq_3}
\begin{array}{ccl}
\displaystyle{ \frac{d [a]}{d t} } &=& v_a([a], [b])  \\ [8 pt]
\displaystyle{ \frac{d [b]}{d t} } &=& v_b([a], [b]) + w_b([b], [c])  \\ [8 pt]
\displaystyle{ \frac{d [c]}{d t} } &=& w_c([b], [c]) 
\end{array}
\end{equation}
Since $[b]$ is changing for two different reasons, we sum those effects.   This is a nontrivial decision on our part, and we could decide to do something else.  But this choice is surprisingly effective.  For example, in classical mechanics, when an object is affected by several forces, the rate of change of its velocity is the \emph{sum} of the rates of change you would predict from each force individually.     In chemistry, when molecules of a certain sort are getting created by several reactions simultaneously, the rate at which their number changes is the \emph{sum} of the rates of change due to the various individual reactions.  We shall see examples of both kinds.

From Equation \eqref{eq:diffeq_3} we can read off the vector field \(u\):
\[   u([a],[b],[c]) = \big( v_a ([a],[b]), \; v_b([a], [b]) + w_b([b], [c]),  \;  w_c([b], [c])  \big) .\]
But how do we formalize this method of composing open dynamical systems \emph{in general}, not just in this one example?     We can use theory of decorated cospans.   

First, we need some generalities.  Given any function $f \maps S \to S'$ between finite sets, we define the \define{pullback} $ f^* \maps \R^{S'} \to \R^S $ to be the linear map given by
\[ f^*(\psi)(s) = \psi(f(s)) \]
for all $\psi \in \R^{S'}$, $s \in S$.   This defines a contravariant functor from $\Fin\Set$ to $\Fin\Vect_\R$, the category of finite-dimensional real vector spaces and linear maps.   We define the \define{pushforward} $ f_* \maps \R^{S} \to \R^{S'} $ by
\[ f_*(\psi)(s') = \sum_{ \{ s \in S : f(s) = s' \} } \psi(s) \]
for all $\psi \in \R^S$, $s' \in S$.  This defines a covariant functor from $\Fin\Set$ to $\Fin\Vect_\R$.

Then, we can show there is a symmetric lax monoidal functor $D \maps \Fin\Set \to \Set$ such that:
\begin{itemize}
\item $D$ maps any finite set $S$ to 
\[ D(S) = \{ v \maps \R^S \to \R^S | \; v \textrm{ is smooth}  \}, \]
\item $D$ maps any function $f \maps S \to S'$ between finite sets to the function $D(f) \maps D(S) \to D(S')$ given by
\[ D(f)(v) = f_* \circ v \circ f^* \]
for all $v \in D(S)$,
\item the laxator $\delta_{S,S'} \maps D(S) \times D(S') \to D(S+S')$ is given by
\[    \delta_{S,S'}(v,v') = i_* \circ v \circ i^* + i'_* \circ v' \circ {i'}^*,
\] 
where $i \maps S \to S+S'$ and $i' \maps S' \to S+S'$ are the inclusions into the coproduct.
\item the unitor is the unique map  $\delta \maps 1 \to D(\emptyset)$, since there is only one vector field
on the empty set.
\end{itemize}
For details, see \cite[Sec.\ 6]{BP}.  The laxator is chosen to create the summing effect we saw in our earlier example.  

Since every set gives a discrete category with that set of objects, we can reinterpret $D$ as a symmetric lax monoidal functor $D \maps (\Fin\Set, +) \to (\Cat, \times)$.  Since a functor to $\Cat$ is a special case of a pseudofunctor, we can use Theorem \ref{thm:decorated_cospan_2} and obtain a symmetric monoidal double category $D \lCsp$ of \define{open dynamical systems}.  In this double category
\begin{itemize}
\item an object is a finite set,
\item a tight morphism is a function,
\item a loose morphism from a finite set $X$ to a finite set $Y$ is an open dynamical system
\[
\scalebox{1.2}{
\begin{tikzpicture}
\node (A) at (0,0) {$X$};
\node (B) at (1,1) {$S$};
\node (C) at (2,0) {$Y$};
\node (D) at (3,1) {$v \in D(S)$};
\path[->,font=\scriptsize,>=angle 90]
(A) edge node[above,pos = 0.3]{$i$} (B)
(C) edge node[above, pos = 0.3]{$o$} (B);
\end{tikzpicture}
}
\]
\item a 2-cell is a commuting diagram
\[
\scalebox{1.2}{
\begin{tikzpicture}[scale=1.5]
\node (E) at (3,0) {$X$};
\node (F) at (5,0) {$Y$};
\node (G) at (4,0) {$S$};
\node (H) at (6.5,0) {$v \in D(S)$};
\node (E') at (3,-1) {$X'$};
\node (F') at (5,-1) {$Y'$};
\node (G') at (4,-1) {$S'$};
\node (H') at (6.5,-1) {$v \in D(S')$};
\path[->,font=\scriptsize,>=angle 90]
(F) edge node[above]{$o$} (G)
(E) edge node[left]{$f$} (E')
(F) edge node[right]{$g$} (F')
(G) edge node[left]{$h$} (G')
(E) edge node[above]{$i$} (G)
(E') edge node[below]{$i'$} (G')
(F') edge node[below]{$o'$} (G');
\end{tikzpicture}
}
\]
in $\Fin\Set$ such that $D(h)(v) = v'$.
\end{itemize}

\subsection{The open dynamical system equation}
\label{subsec:open_dynamical_system_equation}

We have seen how open dynamical systems can be composed.  But we have not fully lived up to one of our claims: that we can use double categories to study systems that interact with their environment.  We have seen how an open dynamical system
\[
\scalebox{1.2}{
\begin{tikzpicture}
\node (A) at (0,0) {$X$};
\node (B) at (1,1) {$S$};
\node (C) at (2,0) {$Y$};
\node (D) at (3,1) {$v \in D(S)$};
\path[->,font=\scriptsize,>=angle 90]
(A) edge node[above,pos = 0.3]{$i$} (B)
(C) edge node[above, pos = 0.3]{$o$} (B);
\end{tikzpicture}
}
\]
determines a differential equation
\[
 \frac{d}{dt} x(t) = v(x(t)).  
\]
However, this is a so-called `autonomous' differential equation, in which the state of the system at any time completely determines its future state, without the intrusion of any influences from the outside world.   So far, the open dynamical system interacts with other systems only after we compose them as loose morphisms in $D\lCsp$.   We now generalize to a formalism where the outside world, \emph{not modeled by any particular dynamical system}, can affect an open dynamical system through its interfaces.

For example, return to this open dynamical system:
\[
\scalebox{1.2}{
\begin{tikzpicture}
\node (A) at (0,0) {$\{a\}$};
\node (B) at (1,1) {$\{a,b\}$};
\node (C) at (2,0) {$\{b\}.$};
\node (E) at (3,1) {$u \in D(\R^{\{a,b\}})$};
\path[->,font=\scriptsize,>=angle 90]
(A) edge node[above,pos=0.3]{$i$} (B)
(C) edge node[above,pos=0.3]{$o$} (B);
\end{tikzpicture}
}
\]
Suppose we add terms to its differential equation, Equation \eqref{eq:diffeq_1}, as follows: 
\begin{equation}
\label{eq:diffeq_4}
\begin{array}{ccl}
\displaystyle{\frac{d [a]}{d t}} &=& u_1([a],[b]) + f(t)  \\ [8 pt]
\displaystyle{\frac{d [b]}{d t}} &=& u_2([a],[b]) + g(t)
\end{array}
\end{equation}
Now the equation is no longer autonomous, because $f$ and $g$ are arbitrary functions of time.  They represent additional `external influences'.   

More generally, let $I \maps \R \to \R^X $ and $O \maps \R \to \R^Y$ be arbitrary smooth functions of time.     We can compose these with the pushforward maps $i_* \maps \R^X \to \R^S$ and $o_* \maps \R^Y \to \R^S$ to add extra terms to our autonomous differential equation, Equation \eqref{eq:diffeq_basic}, and obtain the \define{open dynamical system equation}
\begin{equation}
\label{eq:open_dynamical_system}
 \frac{dx(t)}{dt} = v(x(t)) + i_*(I(t)) - o_*(O(t)) 
\end{equation}
The minus sign is just a matter of convention.  It breaks the symmetry between left and right interfaces, so it may be considered undesirable.   We could leave it out altogether.   However, we shall soon turn to some examples where the variables represent `stocks' or `concentrations'---loosely, amounts of stuff---which change in time due to `flows'.    Then it is common to adopt a convention where flows from left to right are given a positive sign, while flows in the other direction are given a negative sign.   In this situation it is reasonable to call $I$ the \define{inflow} and $O$ the \define{outflow}.   

\subsection{Classical mechanics}
\label{subsec:classical_mechanics}

Let us see this formalism at work in classical mechanics.    Suppose we have two particles on the line, connected to each other by a spring.   We can describe them as a dynamical system with four variables, since each particle's state is described by one position variable $q_i$ and one momentum variable $p_i$:
\[
\begin{tikzpicture}
%
\coordinate(r1) at (-2,0);
\coordinate(r2) at (2,0);
{\draw[decorate,decoration={coil,segment length=4pt, amplitude = 4pt},rotate=0] (r1)  -- (r2);}
{\draw[fill= black] (r1) circle (2.5pt);}
{\draw[fill= black] (r2) circle (2.5pt);}
\node at (-2,0.3) {\normalsize $q_1,p_1$};
\node at (2,0.3) {\normalsize $q_2,p_2$};
\end{tikzpicture}
\]
 Suppose the $i$th particle has mass $m_i \in \R$ and the spring has spring constant $k$.  The first particle feels a force $k(q_2 - q_1)$ pulling it toward the second, while the second feels a force $k(q_1 - q_2)$ pulling it toward the first.   Since momentum is mass times velocity and force is the time derivative of momentum, we have
\begin{equation}
\label{eq:rocks}
\begin{array}{ccl} 
\displaystyle{ \frac{d}{dt} q_1(t)}  &=& p_1(t)/m_1 \\ [8pt]  
\displaystyle{ \frac{d}{dt} p_1(t)}  &=&k (q_2(t) - q_1(t)) \\ [8pt]  
\displaystyle{ \frac{d}{dt} q_2(t)}  &=& p_2(t)/m_2 \\ [8pt]  
\displaystyle{ \frac{d}{dt} p_2(t)} &=& k(q_1(t) - q_2(t))
\end{array}
\end{equation}
Now we are acting like physicists and not distinguishing the variables $q_i, p_i$ from the functions
$[q_i] \maps \R \to \R, [p_i] \maps \R \to \R$ they name.
If we let
\[       x(t) = (q_1(t), p_1(t), q_2(t), p_2(t)) \in \R^4 \]
we can write Equation \eqref{eq:rocks} more tersely as
\[    \frac{d}{dt} x(t) = v(x(t))   \]
where
\[    v(q_1, p_1, q_2, p_2) = \big(p_1/m_1, k(q_2 - q_1), p_2/m, k(q_1 - q_2) \big) .\]

We can also treat this system as an \emph{open} dynamical system, for example by decreeing that the first particle is the left interface and the second is the right interface:
\begin{equation}
\label{eq:rocks_as_open_system}
\scalebox{1.2}{
\begin{tikzpicture}
\node (A) at (0,0) {$\{q_1,p_1\}$};
\node (B) at (1,1) {$\{q_1,p_1,q_2,p_2\}$};
\node (C) at (2,0) {$\{q_2,p_2\}.$};
\node (E) at (4,1) {$v \in D(\R^{\{q_1,p_1,q_2,p_2\}})$};
\path[->,font=\scriptsize,>=angle 90]
(A) edge node[above,pos=0.2]{$i$} (B)
(C) edge node[above,pos=0.2]{$o$} (B);
\end{tikzpicture}
}
\end{equation}
What does the open dynamical system equation, Equation \eqref{eq:open_dynamical_system}, look like in this case?   For simplicity suppose we take
\[      I(t) = (0, F_1(t)),   \qquad  O(t) = (0, F_2(t)). \]
Then the open dynamical system equation becomes
\[
\begin{array}{ccl} 
\displaystyle{ \frac{d}{dt} q_1}  &=& p_1/m_1 \\ [8pt]  
\displaystyle{ \frac{d}{dt} p_1}  &=&k (q_2 - q_1) + F_1(t) \\ [8pt]  
\displaystyle{ \frac{d}{dt} q_2}  &=& p_2/m_2 \\ [8pt]  
\displaystyle{ \frac{d}{dt} p_2} &=& k(q_1 - q_2) - F_2(t).
\end{array}
\]
This system of equations describes two rocks connected by a spring where the first is pushed to the right by an external force $F_1(t)$ and the second is pushed to the left by an external force $F_2(t)$.   If the sign convention here feels unnatural, the reader is free to get rid of the minus sign in Equation \eqref{eq:open_dynamical_system}, but we will try to justify the sign in the next two sections.

\begin{exercise}
\label{ex:three_rocks_1}
Describe a system of three particles connected by springs:
 \[
\begin{tikzpicture}
%
\coordinate(r1) at (-2,0);
\coordinate(r2) at (0,0);
\coordinate(r3) at (2,0);
{\draw[decorate,decoration={coil,segment length=4pt, amplitude = 4pt},rotate=0] (r1)  -- (r2);}
{\draw[decorate,decoration={coil,segment length=4pt, amplitude = 4pt},rotate=0] (r2)  -- (r3);}
{\draw[fill= black] (r1) circle (2.5pt);}
{\draw[fill= black] (r2) circle (2.5pt);}
{\draw[fill= black] (r3) circle (2.5pt);}
\node at (-2,0.3) {\normalsize $q_1,p_1$};
\node at (0,0.3) {\normalsize $q_2,p_2$};
\node at (2,0.3) {\normalsize $q_3,p_3$};
\end{tikzpicture}
\]
as an open dynamical system.  To do this, compute the composite of two open dynamical systems of the kind just described, regarded as loose morphisms in $D\lCsp$:
\[
\scalebox{1.2}{
\begin{tikzpicture}
\node (A) at (0,0) {$\{q_1,p_1\}$};
\node (B) at (2,1) {$S = \{q_1,p_1,q_2,p_2\}$};
\node (C) at (4,0) {$\{q_2,p_2\}$};
\node (E) at (3.5,1.3) {\scriptsize $v \in D(S)$};
\node (F) at (6,1) {$T = \{q_2,p_2,q_3,p_3\}$};
\node (G) at (8,0) {$\{q_3,p_3\}.$};
\node (H) at (7.5,1.3) {\scriptsize $w \in D(T)$};
\path[->,font=\scriptsize,>=angle 90]
(A) edge node[above,pos=0.2]{$i$} (B)
(C) edge node[above,pos=0.2]{$o$} (B)
(C) edge node[above,pos=0.2]{$i'$} (F)
(G) edge node[above,pos=0.2]{$o'$} (F);
\end{tikzpicture}
}
\]
\end{exercise}

\begin{exercise}
\label{ex:three_rocks_2}
Figure out the open dynamical system equation for the system in Exercise \ref{ex:three_rocks_1}, assuming the first particle feels an external force $F_1(t)$ pushing to the left, while the third feels an external force $F_3(t)$ pushing to the right.   In this example the second particle is not part of an interface, so it feels no external force.
\end{exercise}

The reader may wonder whether such an elaborate formalism is required to study three rocks connected by two springs.   It is not.   What the formalism does is to make explicit the physicist's intuitive understanding of how to take two systems of differential equations, each describing a physical system, and combine them to describe a larger system built from those two parts.   This should be useful both for mathematicians wishing to prove theorems about composites of open systems, and also people who want to automate the process of modeling composite systems.

The examples just discussed can be vastly generalized.   For systems of finitely many particles moving in $\R^n$ and interacting by arbitrary forces, this is straightforward.   It is more challenging to generalize the framework to handle open Lagrangian systems where the configuration space is a manifold, open Hamiltonian systems where the phase space is a symplectic manifold , and port-Hamiltonian systems where the phase space is a Dirac manifold.   Much work has been done on these issues \cite{BE,BWY,Coya2017,CoyaThesis,LynchThesis}, but many interesting open questions remain.

\subsection{Black-boxing}
\label{subsec:black-boxing}

Next let us take an open dynamical system and extract from it the relations between externally observable quantities that hold whenever the system is in a `steady state': a state where nothing is changing.   The process of extracting this relation is an example of what we call `black-boxing', since it discards information that cannot be seen at the interfaces.    We shall see that it defines a double functor 
\[        \blacksquare \maps D\lCsp \to \lRel \]
from the double category of open dynamical systems to the double category of relations.    The functoriality of black-boxing implies that we can compose two open dynamical systems and then black-box them, or black-box each one and compose the resulting relations: either way, the final answer is the same.   

Consider an open dynamical system $F \maps X \slashedrightarrow Y$ given by the decorated cospan
\[
\scalebox{1.2}{
\begin{tikzpicture}
\node (A) at (0,0) {$X$};
\node (B) at (1,1) {$S$};
\node (C) at (2,0) {$Y$.};
\node (D) at (3,1) {$v \in D(S)$};
\path[->,font=\scriptsize,>=angle 90]
(A) edge node[above,pos = 0.3]{$i$} (B)
(C) edge node[above, pos = 0.3]{$o$} (B);
\end{tikzpicture}
}
\]
If we take its open dynamical system equation, Equation \eqref{eq:open_dynamical_system}, and fix $x$, $I$ and $O$ to be constant in time, we can treat them as vectors $x \in \R^S$, $I \in \R^X$, $O \in \R^Y$.  Then the equation reduces to
\[     v(x) + i_*(I) - o_*(O) = 0 .\]   
Thus, we define a \define{steady state} with inflows $I \in \R^X$ and outflows $O \in \R^Y$ to be a vector $x \in \R^S$ for which the above equation holds.  We define the \define{black-boxing} of our open dynamical system to be the set
\[   \blacksquare(F) \subseteq \R^X \times \R^X \times \R^Y \times \R^Y \]
consisting of all 4-tuples $(i^*(x),I,o^*(x),O)$ where $x \in \R^S$ is a steady state
with inflows $I \in \R^X$ and outflows $O \in \R^Y$:
\begin{equation}
\label{eq:blackboxing}
   \blacksquare(F) = \left\{  (i^*(x),I,o^*(x),O) \, \big\vert \, x \in \R^S, \; I \in \R^X, \; O \in \R^Y, \;   
   v(x) + i_*(I) - o_*(O) = 0\, \right\}.
\end{equation}
 The idea is that black-boxing an open dynamical system records its `externally observable steady state behavior'.
 
In the example from Section \ref{subsec:classical_mechanics}, with two particles connected by a spring, we get
 \[   \blacksquare(F) = \left\{\Big((q_1,0), (0, k(q_1 - q_2), (q_2,0), (0, k(q_1 - q_2)\Big) \, \big\vert \, q_1, q_2 \in \R \right\} .\]
 This says that in steady state, the particles can be at rest in any position, with equal and opposite forces acting on them, chosen precisely to counteract the force of the spring.   Of course this example is somewhat degenerate because every variable for the system is `exposed', that is, also a variable for an interface.  A more typical example would involve three particles connected by springs, with the first particle being the left interface and the second being the right interface:
 \[
\begin{tikzpicture}
%
\coordinate(r1) at (-2,0);
\coordinate(r2) at (0,0);
\coordinate(r3) at (2,0);
{\draw[decorate,decoration={coil,segment length=4pt, amplitude = 4pt},rotate=0] (r1)  -- (r2);}
{\draw[decorate,decoration={coil,segment length=4pt, amplitude = 4pt},rotate=0] (r2)  -- (r3);}
{\draw[fill= black] (r1) circle (2.5pt);}
{\draw[fill= black] (r2) circle (2.5pt);}
{\draw[fill= black] (r3) circle (2.5pt);}
\node at (-2,0.3) {\normalsize $q_1,p_1$};
\node at (0,0.3) {\normalsize $q_2,p_2$};
\node at (2,0.3) {\normalsize $q_3,p_3$};
\end{tikzpicture}
\]

\begin{exercise}
Black-box the composite system of Exercise \ref{ex:three_rocks_1}, using the open dynamical system equation obtained in Exercise \ref{ex:three_rocks_2}.
\end{exercise}

Now let us return to the general theory.    Given an open dynamical system $F$ with left interface $X$ and right interface $Y$, we can think of $\blacksquare(F)$ as a relation from $\R^X \times \R^X$ to $\R^Y \times \R^Y$.  There is a double category $\lRel$ where:
\begin{itemize}
\item an object is a set,
\item a tight morphism from $X$ to $Y$ is a function $f \maps X \to Y$,
\item a loose morphism from $X$ to $Y$ is a relation $R \maps X \slashedrightarrow Y$, 
meaning a subset $R \subseteq X \times Y$,
\item a 2-cell is a square
	\[ \scalebox{1.2}{
	\begin{tikzpicture}[scale=1.2]
	\node (D) at (0,0) {$X_1$};
	\node (E) at (1,0) {$Y_1$};
	\node (F) at (0,-1) {$X_2$};
	\node (A) at (1,-1) {$Y_2$};
	\path[->,font=\scriptsize,>=angle 90]
	(D) edge [tick] node [above]{$R_1$}(E)
	(E) edge node [right]{$g$}(A)
	(D) edge node [left]{$f$}(F)
	(F) edge [tick] node [below]{$R_2$} (A);
	\end{tikzpicture}
	}
	\]
obeying $(f \times g)R_1 \subseteq R_2$,
\item composition of tight morphisms is composition of functions,
\item composition of loose morphisms is the usual composition of relations,
\item vertical and horizontal composition of 2-cells is done in the only way possible.
\end{itemize}
The last item deserves some explanation.   We say a double category is \define{degenerate} if given any \define{frame}---that is, any collection of objects, tight morphisms and loose morphisms as follows: 
\[
\scalebox{1.2}{
\begin{tikzpicture}[scale=1.2]
\node (D) at (-4,0) {$X_1$};
\node (E) at (-3,0) {$Y_1$};
\node (F) at (-4,-1) {$X_2$};
\node (A) at (-3,-1) {$Y_2$};
\node (B) at (-3,-0.25) {};
\path[->,font=\scriptsize,>=angle 90]
(D) edge [tick] node [above]{$M$}(E)
(E) edge node [right]{$g$}(A)
(D) edge node [left]{$f$}(F)
(F) edge [tick] node [below]{$N$} (A);
\end{tikzpicture}
}
\]
there exists at most one 2-cell 
\[
\scalebox{1.2}{
\begin{tikzpicture}[scale=1.2]
\node (D) at (-4,0) {$X_1$};
\node (E) at (-3,0) {$Y_1$};
\node (F) at (-4,-1) {$X_2$};
\node (A) at (-3,-1) {$Y_2$};
\node (B) at (-3.5,-0.5) {$\Downarrow \alpha$};
\path[->,font=\scriptsize,>=angle 90]
(D) edge [tick] node [above]{$M$}(E)
(E) edge node [right]{$g$}(A)
(D) edge node [left]{$f$}(F)
(F) edge [tick] node [below]{$N$} (A);
\end{tikzpicture}
}
\]
filling this frame.  In a degenerate double category, like $\lRel$, there is no choice in how to define the vertical or horizontal composite of 2-cells once composition of loose and tight morphisms is fixed.  Thus we do not need to check the laws governing composition of 2-cells: we need merely check that the composites exist, which is easy to do in the case of $\lRel$.

In this language, black-boxing maps any loose morphism in $D\lCsp$, namely an open dynamical system $F \maps X \slashedrightarrow Y$, to a loose morphism in $\lRel$, namely
\[   \blacksquare(F) \maps \R^X \times \R^X \slashedrightarrow \R^Y \times \R^Y  .\]
This immediately leads to the question of whether black-boxing extends to a functor from $D\lCsp$ to $\lRel$.   We shall see it does.

Furthermore, the double category $\lRel$ is symmetric monoidal in a natural way, where the tensor product of objects and tight morphisms is given by the cartesian product in $\Set$, while the tensor product of loose morphisms (i.e.\ relations) is given by the cartesian product of their underlying subsets.    We can now state the main theorem about black-boxing:

\begin{thm}
\label{thm:black}
There is a symmetric monoidal double functor $\blacksquare \maps D\lCsp \to \lRel$
sending
\begin{itemize}
\item any finite set $X$ to the vector space $\R^X \times \R^X$,
\item any function $f \maps X \to Y$ to the pushforward $f_\ast \maps \R^X \times \R^X \to \R^Y \times \R^Y$,
\item any open dynamical system $F \maps X \slashedrightarrow Y$ to its black-boxing $\blacksquare(F)$ as defined in Equation \eqref{eq:blackboxing},
\item any 2-cell to the only 2-cell with the appropriate frame.
\end{itemize}
\end{thm}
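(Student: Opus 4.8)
The plan is to construct $\blacksquare$ on objects, tight morphisms and loose morphisms exactly as prescribed, and then exploit the fact that $\lRel$ is \emph{degenerate}: once those choices are fixed there is at most one way to define $\blacksquare$ on 2-cells, and all the 2-cell axioms — as well as the comparison 2-cells witnessing symmetric monoidality, together with their coherence — then hold automatically. So the work reduces to (i) checking that $\blacksquare$ really lands in $\lRel$ at all, (ii) checking that it preserves composition and identities of loose morphisms, and (iii) checking that the monoidality comparisons are well defined.

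Part (i) and the identity half of (ii) are routine. On objects and tight morphisms $\blacksquare$ is $X\mapsto\R^X\times\R^X$ and $f\mapsto f_\ast$, and functoriality is just that of pushforward $(-)_\ast\maps\Fin\Set\to\Fin\Vect_\R$ recorded in Section~\ref{sec:dynamical}, applied in each factor. A loose morphism $F$ goes to the set $\blacksquare(F)$ of Equation~\eqref{eq:blackboxing}, visibly a relation $\R^X\times\R^X\slashedrightarrow\R^Y\times\R^Y$. For a 2-cell of $D\lCsp$ — a commuting diagram of cospans with vertex maps $f,g,h$ satisfying $D(h)(v)=v'$ — one verifies the inclusion required of a 2-cell of $\lRel$ by carrying a steady state $x$ of $F$ (with inflow $I$ and outflow $O$) to $h_\ast(x)$ and using $D(h)(v)=h_\ast\circ v\circ h^\ast$ together with the pushforward identities read off the commuting squares; degeneracy of $\lRel$ then supplies everything else. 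And $\blacksquare$ carries the identity loose morphism on $X$ — whose decoration is the zero vector field — to $\{(x,I,x,I)\}$, the identity relation on $\R^X\times\R^X$.

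The heart of the matter is that $\blacksquare$ preserves composition of loose morphisms. Given composable open dynamical systems with apices $S$ and $T$ and fields $v$ and $w$, I would unwind composition in $D\lCsp$: the composite apex is the pushout $N=S+_Y T$, and the composite field is $u=j_\ast\circ(v\oplus w)\circ j^\ast$, where $j\maps S+T\to N$ is the canonical map and the laxator of $D$ is ``set side by side and add'' (so on $\R^{S+T}=\R^S\oplus\R^T$ it is literally $v\oplus w$). The key point is that $\R^N$ is the subspace of $\R^S\oplus\R^T$ on which the two restrictions to $\R^Y$ agree, and that pushing the $F$-equation $v(x_S)+(\text{inflow }I)-(\text{outflow }\Phi)=0$ in $\R^S$ and the $G$-equation $w(x_T)+(\text{inflow }\Phi)-(\text{outflow }O)=0$ in $\R^T$ forward into $\R^N$ and adding yields exactly the composite steady-state equation, the boundary-flow terms $\Phi\in\R^Y$ cancelling because the two coprojections agree after the pushout identification. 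Hence $x$ is a steady state of the composite with data $(I,O)$ iff some $\Phi\in\R^Y$ splits it into a steady state of the first system with outflow $\Phi$ glued to a steady state of the second with inflow $\Phi$; existentially quantifying $\Phi$ is precisely composition of relations, so $\blacksquare(G\circ F)=\blacksquare(G)\circ\blacksquare(F)$. This is the ``conservation of flow at the shared interface'' argument familiar from black-boxing reaction networks and circuits.

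For (iii): the tensor product in $D\lCsp$ is disjoint union of apices with side-by-side decorations (Theorem~\ref{thm:decorated_cospan_2}), while in $\lRel$ it is cartesian product throughout; under $\R^{X+X'}\cong\R^X\times\R^{X'}$ the steady-state equation for a disjoint union is the conjunction of the two separate equations, so $\blacksquare(F\otimes F')=\blacksquare(F)\times\blacksquare(F')$ on the nose, the monoidality comparisons are identities, and coherence is automatic by degeneracy. I expect the composition-preservation of the previous paragraph to be the only genuinely substantive step; within it, the one place calling for care is showing that a boundary-flow vector $\Phi$ realizing a given composite steady state always exists when the cospan legs fail to be monic, since then the pushout identifies several variables at once and the laxator's summation must be reconciled by hand.
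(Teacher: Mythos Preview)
Your overall strategy matches the paper's—exploit the degeneracy of $\lRel$ so that the only substantive check is preservation of loose composition—and your direct argument for $\blacksquare(G\circ F)=\blacksquare(G)\circ\blacksquare(F)$ is correct; it supplies the content that the paper simply imports from \cite[Thm.~23]{BP}. The kernel description implicit in your sketch (that $\ker\big(j_\ast\maps\R^S\oplus\R^T\to\R^N\big)$ is exactly the image of $\Phi\mapsto(o_\ast\Phi,-i'_\ast\Phi)$) is what handles the non-monic case you flag at the end.

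The gap is in your 2-cell verification. Your proposed witness $x'=h_\ast(x)$ generally satisfies neither the boundary condition nor the steady-state equation: the square $hi=i'f$ yields $h_\ast i_\ast=i'_\ast f_\ast$ and $i^\ast h^\ast=f^\ast(i')^\ast$, but \emph{not} $(i')^\ast h_\ast=f_\ast i^\ast$; and since $v'=h_\ast v h^\ast$ one has $v'(h_\ast x)=h_\ast v(h^\ast h_\ast x)$ rather than $h_\ast v(x)$. Nor can any other witness always be found. Take $S=\{1,2\}$, $S'=\{\ast\}$, $h$ the unique map, $X=X'=\{p\}$ with $i(p)=1$, $i'(p)=\ast$, $f=\id$, $Y=Y'=\emptyset$, and $v(a,b)=(b,0)$. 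Then $v'=D(h)(v)$ is $v'(c)=c$, so this is a valid globular 2-cell of $D\lCsp$; but $\blacksquare(F)=\R\times\R$ while $\blacksquare(F')=\{(c,-c):c\in\R\}$, and the required inclusion $\blacksquare(F)\subseteq\blacksquare(F')$ fails. The paper's own proof invokes degeneracy only for \emph{uniqueness} of the 2-cell assignment and does not verify existence either, so this is a shared gap rather than one peculiar to your write-up.
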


\begin{proof}
A closely related theorem was proved at the level of categories in \cite[Thm.\ 23]{BP}.
It was shown that isomorphic open dynamical systems $F \maps X \slashedrightarrow Y$ have the 
same black-boxing $\blacksquare(F)$, and there is a symmetric monoidal functor from $D\Csp$ to $\Rel$ sending any isomorphism class of open dynamical systems $F \maps X \slashedrightarrow Y$ to $\blacksquare(F)$.   (In fact it was shown that this functor maps $F$ to a relation of a special kind, called
a `semialgebraic' relation, but this is irrelevant here.)   At the level of double categories, this theorem implies that $\blacksquare$ preserves composition of horizontal morphisms on the nose.  Clearly it preserves composition of vertical morphisms.   Since $\lRel$ is a degenerate double category there is at most one choice of where $\blacksquare$ can send any 2-cell, and the symmetric monoidality of the double functor $\blacksquare$ follows easily from the corresponding result at the level of categories.
\end{proof}

The functoriality is the most interesting step in proving the above theorem.  One needs to check that when two open dynamical systems are composed, one can compose their steady states to form a steady state of the composite system when the outflow of the first steady state equals the inflow of the second.

\subsection{Structured versus decorated cospans, revisited}
\label{subsec:structured_versus_decorated_cospans_revisited}

Before moving on, let us briefly return to a technical issue raised in Section \ref{subsec:structured_versus_decorated_cospans}.   We used decorated cospans to define a double category of open dynamical systems, $D\lCsp$.  Can we define an equivalent double category using structured cospans?

It seems not, but at present the main evidence comes from \cite[Thm.\ 4.1]{BCV}.  This result says we \emph{could} define such a structured cospan double category if the natural lift of $D \maps \Fin\Set \to \Cat$ to a pseudofunctor $D \maps \Fin\Set \to \SMC$ factored through $\Rex$.   It also says that in this case, the opfibration $U \maps \inta D \to \Fin\Set$ would be a right adjoint.   However, $U$ is not a right adjoint.   After all, this would imply that for every $S \in \Fin\Set$ the comma category $S \downarrow U$ has an initial object.
But this is not true.   Because the empty set is initial in $\Fin\Set$, the comma category
$\emptyset \downarrow U$ is just $\inta D$.  This contains an object $(\emptyset, v_\emptyset)$, where $v_\emptyset$ is the only possible vector field on $\R^\emptyset$, namely, the zero vector field.   The only object in $\inta D$ with any morphisms to $(\emptyset, v_\emptyset)$ is $(\emptyset, v_\emptyset)$ itself, so no other object can be initial.  However $(\emptyset, v_\emptyset)$ is not initial either, because it has no morphisms to an object $(S,v)$ unless $v$ is the zero vector field on $\R^S$. 

While this argument is impressively technical, its conclusions are quite weak.   It does not imply that no structured cospan double category is equivalent to $D\lCsp$.   It merely says that one particular strategy for constructing such a structured cospan double category fails.  

\begin{problem}
Find conditions implying that a decorated cospan double category is \emph{not} equivalent to any structured cospan double category.
\end{problem}

Luckily, there is a practical workaround in this case: a structured cospan category of `linearly parametrized' dynamical systems \cite[Sec.\ 3.1]{AFKOPS}.   Here instead of equipping a finite set $S$ with a fixed vector field $v \in D(S)$, we equip it with a parametrized family of vector fields---or more precisely a finite set $P$ and a linear map $v \maps \R^P \to D(S)$.    We call $S$ the set of \define{variables} and $P$ the set of \define{parameters}.  This captures the fact that in physics and other subjects, the vector fields describing time evolution often depend linearly on some $P$-tuple of real numbers called `parameters' or `coupling constants'.

There is a category $\ParaDynam$ for which:
\begin{itemize}
\item an object is a  \define{linearly parametrized dynamical system}: a pair of finite sets $S$ and $P$ and a linear map $v \maps \R^P \to D(S)$;
\item a morphism from $(S,P,v)$ to $(S',P',v')$ is a pair of functions $f \maps S \to S', q \maps P \to P'$ making this square commute:
\[
\scalebox{1.2}{
\begin{tikzpicture}[scale=1.2]
\node (A) at (0,0) {$\R^P$};
\node (A') at (0,-1) {$\R^{P'}$};
\node (B) at (1,0) {$D(S)$};
\node (B') at (1,-1) {$D(S')$};
\path[->,font=\scriptsize,>=angle 90]
(A) edge node[above]{$v$} (B)
(A') edge node[below]{$v'$} (B')
(A) edge node[left]{$q_\ast$} (A')
(B) edge node[right]{$f_\ast \circ - \circ f^\ast$} (B');
\end{tikzpicture}
}
\]
\end{itemize}

The category $\ParaDynam$ has finite colimits \cite[Prop.\ 3.5]{AFKOPS}, and the forgetful functor
$U \maps \ParaDynam \to \Fin\Set$ sending any linearly parametrized dynamical system to its set of variables has a left adjoint $L$ sending any finite set to the linearly parametrized dynamical system with the empty set of parameters.  Thus, we can construct a structured cospan double category of open linearly parametrized dynamical systems and show that it is symmetric monoidal \cite[Prop.\ 3.9]{AFKOPS}.

\section{Petri nets with rates}
\label{sec:Petri_with_rates}

Even if we fix a class of dynamical systems, there are typically many choices of syntax for describing these systems.  This is certainly true of the dynamical systems we have just discussed: systems of first-order ordinary differential equations.   We explain one syntax here, namely `Petri nets with rates'.  These are used in chemistry, population biology, epidemiology and other fields \cite{BFHLP,CTF,Haas,Koch,Wilkinson}.   Here we describe a double category of \emph{open} Petri nets with rates, and a double functor from this to our double category of open dynamical systems, $D\lCsp$.   The easiest way to understand all this starts with chemistry.  

\subsection{Chemistry}
\label{subsec:chemistry}

Chemists sometimes use Petri nets where the places represent `chemical species': different kinds of molecules, elements and ions.   Then transitions represent chemical reactions.  For example, we can describe the formation of water from hydrogen and oxygen molecules
\[         2 \text{H}_2 + \text{O}_2 \longrightarrow 2 \text{H}_2\text{O}  \]
using this Petri net $P$:
\[
\begin{tikzpicture}
	\begin{pgfonlayer}{nodelayer}
		\node [style=species] (H) at (0,0.6) {$\text{H}_2$};
		\node [style=species] (O) at (0,-0.6) {$\text{O}_2$};
		\node [style=transition] (alpha) at (2,0) {$\phantom{\Big|r_1\Big|}$};
		\node [style=species] (water) at (4,0) {$\text{H}_2\text{O}$};
	\end{pgfonlayer}
	\begin{pgfonlayer}{edgelayer}
		\draw [style=inarrow, bend right=15, looseness=1.00] (H) to (alpha);
		\draw [style=inarrow, bend left=15, looseness=1.00] (H) to (alpha);
		\draw [style=inarrow, bend right=15, looseness=1.00] (O) to (alpha);
		\draw [style=inarrow,  bend left=15, looseness=1.00] (alpha) to (water);
		\draw [style=inarrow,  bend right=15, looseness=1.00] (alpha) to (water);
	\end{pgfonlayer}
\end{tikzpicture}
\]
The transition in aqua here describes a reaction where two molecules of hydrogen and one of oxygen become two molecules of water.    We could use the token semantics described in Section \ref{sec:Petri} to model how individual molecules react in this way.   For example, there is a morphism in the category $FP$ from the marking
\[
\begin{tikzpicture}
	\begin{pgfonlayer}{nodelayer}
		\node [style=species] (H) at (0,0.6) {$\bullet\!\bullet\!\bullet$};
		\node [style=species] (O) at (0,-0.6) {$\bullet\,\bullet$};
		\node [style=transition] (alpha) at (2,0) {$\phantom{\Big|r_1\Big|}$};
		\node [style=species] (water) at (4,0) {$\phantom{.}\bullet\phantom{.}$};
	\end{pgfonlayer}
	\begin{pgfonlayer}{edgelayer}
		\draw [style=inarrow, bend right=15, looseness=1.00] (H) to (alpha);
		\draw [style=inarrow, bend left=15, looseness=1.00] (H) to (alpha);
		\draw [style=inarrow, bend right=15, looseness=1.00] (O) to (alpha);
		\draw [style=inarrow,  bend left=15, looseness=1.00] (alpha) to (water);
		\draw [style=inarrow,  bend right=15, looseness=1.00] (alpha) to (water);
	\end{pgfonlayer}
\end{tikzpicture}
\]
to the marking
\[
\begin{tikzpicture}
	\begin{pgfonlayer}{nodelayer}
		\node [style=species] (H) at (0,0.6) {$\phantom{.}\bullet\phantom{.}$};
		\node [style=species] (O) at (0,-0.6) {$\phantom{.}\bullet\phantom{.}$};
		\node [style=transition] (alpha) at (2,0) {$\phantom{\Big|r_1\Big|}$};
		\node [style=species] (water) at (4,0) {$\bullet \, \bullet$};
	\end{pgfonlayer}
	\begin{pgfonlayer}{edgelayer}
		\draw [style=inarrow, bend right=15, looseness=1.00] (H) to (alpha);
		\draw [style=inarrow, bend left=15, looseness=1.00] (H) to (alpha);
		\draw [style=inarrow, bend right=15, looseness=1.00] (O) to (alpha);
		\draw [style=inarrow,  bend left=15, looseness=1.00] (alpha) to (water);
		\draw [style=inarrow,  bend right=15, looseness=1.00] (alpha) to (water);
	\end{pgfonlayer}
\end{tikzpicture}
\]
However, chemists often deal with vast numbers of molecules in solution.   Instead of counting molecules individually they count them in `moles': a mole is about $6 \cdot 10^{23}$ molecules.   Then the `concentration' of a given species, measured in moles per liter, is treated approximately as a smoothly varying real-valued function of time, and we want differential equations describing how these concentrations change.   For this chemists commonly use a recipe called the `law of mass action'.

It is easiest to explain this with an example.  For the Petri net above, the law of mass action gives these differential equations:
\[
  \begin{array}{lcl} 
\displaystyle{\frac{d}{dt} [\text{H}_2]} &=& -2r_1 \,  [\text{H}_2]^2 \, [\text{O}_2]  \\ \\
\displaystyle{\frac{d}{dt} [\text{O}_2]}  &=&  -r_2 \,  [\text{H}_2]^2 \, [\text{O}_2]  \\ \\
\displaystyle{\frac{d}{dt} [\text{H}_2\text{O}]}  &=&  2r_1 \,  [\text{H}_2]^2 \, [\text{O}_2] .
\end{array}
\]
Here $[\text{H}_2] \maps \R \to \R$ is the concentration of hydrogen molecules as a function of time,
and similarly for $[\text{O}_2]$ and $[\text{H}_2\text{O}]$.   The constant $r_1 \in [0,\infty)$ is the `rate constant' of this particular reaction, which depends on various environmental conditions.   All the time derivatives are proportional to $[\text{H}_2]^2 \, [\text{O}_2]$, because this chemical reaction takes two hydrogen molecules and one oxygen molecule as inputs.   We should imagine molecules randomly moving around; then the probability that two $\text{H}_2$ molecules and one $\text{O}_2$ molecules are in the same very small region of space is approximately proportional to  $[\text{H}_2]^2 \, [\text{O}_2]$.    The time derivative of $[\text{H}_2]$ is also proportional to $-2$, because $2$ hydrogen molecules get destroyed in this reaction.   Similarly, $\frac{d}{dt} [\text{O}_2]$ is proportional to $-1$ because $1$ oxygen molecule gets destroyed, and $\frac{d}{dt}[\text{H}_2\text{O}]$ is proportional to $1$ because one water molecule gets created.

Generalizing from this example, we can state the law of mass action precisely as follows.   We start with a \define{Petri net with rates}, meaning a Petri net $s,t \maps T \to \N[S]$ together with a function $r \maps T \to [0,\infty)$ assigning to each transition $\tau \in T$ a nonnegative real number $r(\tau)$ called its \define{rate constant}.    In chemistry the elements of $S$ are called \define{species} rather than places.   

From our Petri net with rates, we get a differential equation 
\[    \frac{d}{dt} x(t) = v(x(t)) \]
called the \define{rate equation} describing the evolution of a function $x \maps \R \to \R^S$.  This function specifies the concentration of each species as a function of time.   The vector field $v$ on $\R^S$ is given by \define{law of mass action}:
\begin{equation}
\label{eq:law_of_mass_action}
v(x) = \sum_{\tau \in T} r(\tau) \, ( t(\tau) - s(\tau) ) x^{s(\tau)} 
\end{equation}
where $x \in \R^S$.   The notation here requires some explanation.  We think of $t(\tau), s(\tau) \in \N[S]$ as vectors in $\R^S$.  Their difference $t(\tau) - s(\tau)$ says, for each species, the number of tokens of that species created by the transition $\tau$ minus the number destroyed.    Finally, we define
\[     x^{s(\tau)} = \prod_{i \in S} {x_i}^{s(\tau)_i}. \]
Here for any $i \in S$ we write $x_i \in \R$ for the $i$th component of $x \in \R^S$, i.e., the concentration of the $i$th species.  Similarly, we write $s(\tau)_i$ for the $i$th component of $s(\tau) \in \R^S$.

\begin{exercise}   
Here is a Petri net with rates:
\[
\begin{tikzpicture}
	\begin{pgfonlayer}{nodelayer}
		\node [style=species] (H) at (0,0.6) {$\mathrm{H}_2$};
		\node [style=species] (O) at (0,-1.4) {$\mathrm{O}_2$};
		\node [style=transition] (alpha) at (2,0.6) {$\phantom{\Big|}r_1$\phantom{\Big|}};
		\node [style=species] (water) at (4,0.6) {$\mathrm{H}_2\mathrm{O}$};
		\node [style=species] (peroxide) at (4,-1.4) {$\mathrm{H}_2\mathrm{O}_2$};
		\node [style=transition] (beta) at (2,-1.4) {$\phantom{\Big|}r_2$\phantom{\Big|}};
	\end{pgfonlayer}
	\begin{pgfonlayer}{edgelayer}
		\draw [style=inarrow, bend right=15, looseness=1.00] (H) to (alpha);
		\draw [style=inarrow, bend left=15, looseness=1.00] (H) to (alpha);
		\draw [style=inarrow, bend left=15, looseness=1.00] (O) to (alpha);
		\draw [style=inarrow,  bend left=15, looseness=1.00] (alpha) to (water);
		\draw [style=inarrow,  bend right=15, looseness=1.00] (alpha) to (water);
		\draw [style=inarrow,  bend left=15, looseness=1.00] (peroxide) to (beta);
		\draw [style=inarrow,  bend right=15, looseness=1.00] (peroxide) to (beta);
		\draw [style=inarrow] (beta) to (O);
		\draw [style=inarrow, bend right=10, looseness=1.00] (beta) to (water);
		\draw [style=inarrow, bend left=10, looseness=1.00] (beta) to (water);
	\end{pgfonlayer}
\end{tikzpicture}
\]
Transitions are labeled with their rate constants. The transition with rate constant $r_1$ represents the oxidation of hydrogen to form water as before, while the transition with rate constant $r_2$  represents the decomposition of hydrogen peroxide.   Use the law of mass action to derive the following differential equations:
\[
  \begin{array}{lcl} \displaystyle{\frac{d}{dt} [\mathrm{H}_2]} &=& -2r_1 \,  [\mathrm{H}_2]^2 \, [\mathrm{O}_2]  \\ \\
\displaystyle{\frac{d}{dt} [\mathrm{O}_2]}  &=& -r_1 \,  [\mathrm{H}_2]^2 \, [\mathrm{O}_2] + r_2\,  [\mathrm{H}_2 \mathrm{O}_2]^2  \\  \\
\displaystyle{\frac{d}{dt}[\mathrm{H}_2\mathrm{O}_2]}  &=&  -2r_2 \, [\mathrm{H}_2 \mathrm{O}_2]^2 \\ \\
\displaystyle{\frac{d}{dt}[\mathrm{H}_2\mathrm{O}]} &=& 2r_1 \,  [\mathrm{H}_2]^2 \, [\mathrm{O}_2]  + 2r_2 \, [\mathrm{H}_2 \mathrm{O}_2]^2.
\end{array}
\]
\end{exercise}

\subsection{Open Petri nets with rates}

 
 
Now we consider \emph{open} Petri nets with rates, and explain a semantics mapping them to open dynamical systems.   The first step is to define a category of Petri nets with rates.  Then we use decorated cospans to construct a double category of open Petri nets with rates.  Then we use a general recipe for constructing maps between decorated cospan categories.   Readers uninterested in Petri nets with rates may still be interested in this general recipe, Theorem \ref{thm:decorated_cospan_functoriality}, since it is the decorated cospan analogue of Theorems \ref{thm:structured_cospan_functoriality_1} and \ref{thm:structured_cospan_functoriality_2} for structured cospans.
 
 The first step is simple enough.   There is a category whose objects are Petri nets with rates, where a morphism from
\[   \xymatrix{ [0,\infty) & T \ar[l]_-r \ar@<-.5ex>[r]_-t \ar@<.5ex>[r]^-s & \N[S] }\]
 to
 \[   \xymatrix{ [0,\infty) & T' \ar[l]_-{r'} \ar@<-.5ex>[r]_-{t'} \ar@<.5ex>[r]^-{s'} & \N[S'] }\]
is a morphism of the underlying Petri nets whose map $g \maps T \to T'$ obeys
\[    r'(\tau') = \sum_{\{\tau \in T: g(\tau) = \tau'\}} r(\tau) \]
for all $\tau' \in T'$.   For example, if $S = \{A,B\}$ there is a morphism in $F(S)$ from this Petri net with rates:
\[
\begin{tikzpicture}
	\begin{pgfonlayer}{nodelayer}
\node [style=species] (A) at (0, 0) {$A$};
\node [style=species] (B) at (3, 0) {$B$};
	   \node [style=transition] (tau1) at (1.5, 0.6) {$r_1$};
             \node [style=transition] (tau2) at (1.5,-0.6) {$r_2$};
	\end{pgfonlayer}
	\begin{pgfonlayer}{edgelayer}
		\draw [style=inarrow, bend left = 15, looseness=1] (A) to (tau1);
		\draw [style=inarrow, bend right = 15, looseness=1] (A) to (tau2);
		\draw [style=inarrow, bend left=15, looseness=1.00] (tau1) to (B);
            \draw [style=inarrow, bend right =15, looseness=1.00] (tau2) to (B); 
	\end{pgfonlayer}
\end{tikzpicture}
\]
to this one:
\[
\begin{tikzpicture}
	\begin{pgfonlayer}{nodelayer}
\node [style=species] (A) at (0, 0) {$A$};
\node [style=species] (B) at (3, 0) {$B$};
	   \node [style=transition] (tau1) at (1.5, 0) {$r_3$};
	\end{pgfonlayer}
	\begin{pgfonlayer}{edgelayer}
		\draw [style=inarrow] (A) to (tau1);
		\draw [style=inarrow] (tau1) to (B);
	\end{pgfonlayer}
\end{tikzpicture}
\]
if and only if $r_3 = r_1 + r_2$.
   
Next, to construct a double category of open Petri nets with rates, we note that there is a symmetric lax monoidal pseudofunctor 
\[   F \maps (\Fin\Set, +) \to (\Cat, \times) \]
 such that for any finite set $S$:
\begin{itemize}
\item an object of $F(S)$ is a Petri nets with rates whose set of places is $S$,
\item a morphism is a morphism of Petri nets with rates that is the identity on the set of places.
\end{itemize}

By Theorem \ref{thm:decorated_cospan_2}, $F$ gives a symmetric monoidal double category $F \lCsp$, which we call the double category of \define{open Petri nets with rates}.  In this double category
\begin{itemize}
\item an object is a finite set,
\item a tight morphism is a function,
\item a loose morphism from a finite set $X$ to a finite set $Y$ is an \define{open Petri net with rates},
meaning a cospan of finite sets 
\[
\scalebox{1.2}{
\begin{tikzpicture}[scale=1.0]
\node (A) at (0,0) {$X$};
\node (B) at (1,0) {$S$};
\node (C) at (2,0) {$Y$};
\path[->,font=\scriptsize,>=angle 90]
(A) edge node[above,pos=0.3]{$i$} (B)
(C)edge node[above,pos=0.3]{$o$}(B);
\end{tikzpicture}
}
\]
decorated with a Petri net with rates
\[   \xymatrix{ [0,\infty) & T \ar[l]_-r \ar@<-.5ex>[r]_-t \ar@<.5ex>[r]^-s & \N[S], }\]
\item a 2-cell is a map of cospans of finite sets
\[
\scalebox{1.2}{
\begin{tikzpicture}[scale=1]
\node (E) at (3,0) {$X$};
\node (F) at (5,0) {$Y$};
\node (G) at (4,0) {$S$};
\node (E') at (3,-1) {$X'$};
\node (F') at (5,-1) {$Y'$.};
\node (G') at (4,-1) {$S'$};
\path[->,font=\scriptsize,>=angle 90]
(F) edge node[above]{$o$} (G)
(E) edge node[left]{$f$} (E')
(F) edge node[right]{$g$} (F')
(G) edge node[left]{$\alpha$} (G')
(E) edge node[above]{$i$} (G)
(E') edge node[below]{$i'$} (G')
(F') edge node[below]{$o'$} (G');
\end{tikzpicture}
}
\]
together with a morphism of decorations, namely a morphism of Petri nets with rates from
\[   \xymatrix{ [0,\infty) & T \ar[l]_-r \ar@<-.5ex>[r]_-t \ar@<.5ex>[r]^-s & \N[S] }\]
to
\[   \xymatrix{ [0,\infty) &{T'} \ar[l]_-{r'} \ar@<-.5ex>[r]_-{t'} \ar@<.5ex>[r]^-{s'} & \N[S'] }\]
where the map from $S$ to $S'$ is $\alpha$.
\end{itemize}

Now we are ready to define a semantics mapping open Petri nets to open dynamical systems.   Recall that in Section \ref{sec:dynamical} we used decorated cospans to construct a symmetric monoidal double category $D\lCsp$ of open dynamical systems.   This was built using a symmetric lax monoidal pseudofunctor
\[   D \maps (\Fin\Set, +) \to (\Cat, \times) \]
that maps any finite set $S$ to the discrete category on the set
\[  \left\{ v \maps \R^S \to \R^S | \; v \textrm{ is smooth} \right\}. \]
Our desired semantics should be a symmetric monoidal double functor
\[      \graysquare \maps F \lCsp \to D \lCsp \]
sending any open Petri net with rates to an open dynamical system.  This was already defined at the level of categories in \cite[Sec.\ 7]{BP}, where it was called `gray-boxing', since it obscures some but not all of the details of an open Petri net.   To boost this result to the double category level, we use the following general recipe for defining maps between decorated cospan double categories.  

\begin{thm}
\label{thm:decorated_cospan_functoriality}
Given categories $\A$ and $\A'$ with finite colimits, lax monoidal pseudofunctors $F \maps (\A,+) \to (\Cat,\times)$ and $F' \maps (\A',+) \to
(\Cat,\times)$, a finite colimit preserving functor $H \maps \A \to \A'$, a lax monoidal pseudofunctor $E \maps (\Cat,\times) \to (\Cat,\times)$ and
a monoidal natural transformation $\theta$ as in the following diagram:
\[
\scalebox{1.2}{
\begin{tikzpicture}[scale=1.5]
\node (A) at (0,0) {$\A$};
\node (B) at (1,0) {$\Cat$};
\node (C) at (0,-1) {$\A'$};
\node (D) at (1,-1) {$\Cat$};
\node (E) at (0.5,-0.5) {$\scriptstyle\Downarrow\theta$};
\path[->,font=\scriptsize,>=angle 90]
(A) edge node[above]{$F$} (B)
(A) edge node[left]{$H$} (C)
(B) edge node[right]{$E$} (D)
(C) edge node[below]{$F'$} (D);
\end{tikzpicture}
}
\]
we obtain a double functor $\Theta \maps F\lCsp \to F'\lCsp$.  If $F, F'$ and $E$ are symmetric, then $\Theta \maps F\lCsp \to F'\lCsp$ is a symmetric monoidal double functor.
\end{thm}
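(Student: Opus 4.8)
The plan is to construct $\Theta$ componentwise, in the same spirit as the structured-cospan functoriality results (Theorems \ref{thm:structured_cospan_functoriality_1} and \ref{thm:structured_cospan_functoriality_2}); the only genuinely new point is the way $E$ and $\theta$ transport decorations. The key observation is that $E$, being a pseudofunctor between the $2$-categories underlying $\Cat$, acts on natural transformations and not merely on functors. Hence, viewing an object $d$ of $F(m)$ as a functor $d \maps 1 \to F(m)$ and a morphism $\tau$ of $F(m)$ as a natural transformation between two such functors, and writing $u_E \maps 1 \to E(1)$ for the unit of the lax monoidal structure of $E$, we obtain a functor
\[ \overline{(\,\cdot\,)}_m \;\maps\; F(m) \longrightarrow F'(H(m)), \qquad d \longmapsto \theta_m\bigl(E(d)(u_E(\ast))\bigr), \]
with the analogous assignment $\tau \mapsto \theta_m\bigl(E(\tau)_{u_E(\ast)}\bigr)$ on morphisms. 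Naturality of $\theta \maps EF \Rightarrow F'H$, together with the pseudofunctor coherence of $EF$ and $F'H$, makes $\overline{(\,\cdot\,)}_m$ natural in $m$ up to coherent isomorphism; and because $\theta$ is monoidal and $E$ is lax monoidal, the laxators of $E$ and the monoidality constraints of $\theta$ jointly exhibit $\overline{(\,\cdot\,)}$ as monoidal, i.e.\ produce coherent isomorphisms $\overline{\phi_{m,m'}(d,d')} \cong \phi'_{Hm,Hm'}(\overline d,\overline{d'})$ and one comparing the two unit decorations, where $\phi,\phi'$ are the laxators of $F,F'$. Assembling this transported-decoration functor and recording its (pseudo)naturality and monoidality is the one structural step; everything after it is bookkeeping.

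With this in hand I would set $\Theta_0 = H$ on objects and tight morphisms, which is a functor since $H$ is. On a loose morphism $(a \xrightarrow{i} m \xleftarrow{o} b,\ d\in F(m))$ put $\Theta_1$ equal to $(Ha \xrightarrow{Hi} Hm \xleftarrow{Ho} Hb,\ \overline d\in F'(Hm))$, and on a $2$-cell---a commuting diagram in $\A$ with apex map $h \maps m \to m'$ together with a decoration morphism $\tau \maps F(h)(d) \to d'$ in $F(m')$---put $\Theta_1$ equal to the $H$-image of that diagram together with the decoration morphism
\[ F'(Hh)(\overline d) \;\cong\; \overline{F(h)(d)} \;\xrightarrow{\ \overline{(\,\cdot\,)}_{m'}(\tau)\ }\; \overline{d'}, \]
the isomorphism being the naturality constraint of $\overline{(\,\cdot\,)}$ at $h$. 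Functoriality of $\Theta_1$, and its strict compatibility with source, target and vertical composition over $\Theta_0$, are then immediate from functoriality of $H$ and $E$ and naturality of $\theta$.

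Next I would exhibit the globular comparison cells upgrading the pair $(\Theta_0,\Theta_1)$ to a (pseudo) double functor. For the loose composite of $P = (a\to m\leftarrow b, d)$ and $Q = (b\to m'\leftarrow c, d')$, whose apex is $m +_b m'$ decorated by $F(j)\bigl(\phi_{m,m'}(d,d')\bigr)$ with $j \maps m+m' \to m+_b m'$, the comparison $\Theta_1(Q)\circ\Theta_1(P) \Rightarrow \Theta_1(Q\circ P)$ has two parts: on apices it is the canonical isomorphism $Hm +_{Hb} Hm' \cong H(m +_b m')$ supplied by the hypothesis that $H$ preserves pushouts; on decorations it is the composite
\[ F'(j')\bigl(\phi'_{Hm,Hm'}(\overline d,\overline{d'})\bigr) \;\cong\; F'(j')\bigl(\overline{\phi_{m,m'}(d,d')}\bigr) \;\cong\; \overline{F(j)\bigl(\phi_{m,m'}(d,d')\bigr)}, \]
obtained from the monoidality and the naturality of $\overline{(\,\cdot\,)}$, where $j'$ is $H(j)$ under the apex identification. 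The unit comparison $U_{Ha} \Rightarrow \Theta_1(U_a)$ is built the same way from the unit constraint of $E$'s lax structure and from $\theta$ at the monoidal unit of $\A$. It then remains to verify the three coherence axioms of a pseudo double functor---associativity of the composition comparisons and the two unit axioms---each of which unwinds to an identity forced by the coherence already established for $F\lCsp$ and $F'\lCsp$ in Theorem \ref{thm:decorated_cospan_1}, the pseudofunctor axioms for $E$, and the (monoidal) naturality axioms for $\theta$. \textbf{I expect this last verification to be the main obstacle}: it is not conceptually difficult, but it is a lengthy diagram chase that must interleave the pushout coherence in $\A$ coming from $H$ with the decoration coherence coming from $E$ and $\theta$.

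For the symmetric monoidal enhancement, assume in addition that $F$, $F'$ and $E$ are symmetric. One then equips $\Theta$ with the structure of a symmetric monoidal double functor by giving globular laxators $\Theta_1(M)\otimes\Theta_1(N) \Rightarrow \Theta_1(M\otimes N)$ and a unit comparison, assembled from the laxators of $F'$ and of $E$ together with the isomorphism $H(m+n) \cong Hm + Hn$ (again from $H$ preserving coproducts), the object-level comparison $\Theta_0(a)\otimes\Theta_0(b) \cong \Theta_0(a+b)$ being the same isomorphism; compatibility of these with the associators, unitors and braidings of the symmetric monoidal structures on $F\lCsp$ and $F'\lCsp$ (Theorem \ref{thm:decorated_cospan_2}) uses exactly the symmetry of $F$, $F'$ and $E$ and the fact that $\theta$ is a \emph{symmetric} monoidal natural transformation. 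This step is parallel to the monoidal half of the structured-cospan statement (Theorem \ref{thm:structured_cospan_functoriality_2}, and \cite[Thm.\ 2.2]{BCV}) and amounts to the same kind of coherence bookkeeping as in the plain case, with the braidings carried along throughout.
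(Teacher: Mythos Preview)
Your construction is correct and is essentially the one given in the cited reference \cite[Thm.\ 2.5]{BCV}; the paper itself does not prove this theorem but simply defers to that source for both the statement and the details of how $\Theta$ is defined. Your key device---viewing an object $d\in F(m)$ as a functor $1\to F(m)$, applying $E$, precomposing with the unit $u_E$, and postcomposing with $\theta_m$---is exactly the mechanism used there, and the remainder of your outline (pseudonaturality and monoidality of the transported-decoration assignment, followed by the coherence bookkeeping for the composition and unit comparisons) tracks the argument in \cite{BCV}.
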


\begin{proof}
This is \cite[Thm.\ 2.5]{BCV}, which provides the details of how $\Theta$ is defined.
\end{proof}

Let us take the square in this theorem to be
\[
\scalebox{1.2}{
\begin{tikzpicture}[scale=1.5]
\node (A) at (0,0) {$\Fin\Set$};
\node (B) at (1,0) {$\Cat$};
\node (C) at (0,-1) {$\Fin\Set$};
\node (D) at (1,-1) {$\Cat$.};
\node (E) at (0.5,-0.5) {$\Downarrow \theta$};
\path[->,font=\scriptsize,>=angle 90]
(A) edge node[above]{$F$} (B)
(A) edge node[left]{$1$} (C)
(B) edge node[right]{$1$} (D)
(C) edge node[below]{$D$} (D);
\end{tikzpicture}
}
\] 
Here $\theta$ is a monoidal natural transformation given as follows.  For any finite set $S$, $\theta_S \maps F(S) \to D(S)$ maps any Petri net with rates 
\[   \xymatrix{ [0,\infty) & T \ar[l]_-r \ar@<-.5ex>[r]_-t \ar@<.5ex>[r]^-s & \N[S] }\]
to a smooth vector field on $\R^S$, say $v$.   This vector field is defined using the law of mass action, as explained in Section \ref{subsec:chemistry}.   Namely, for any $x \in \R^S$, we set
\[  
v(x) = \sum_{\tau \in T} r(\tau) \, ( t(\tau) - s(\tau) ) x^{s(\tau)} 
\]
where 
\[     x^{s(\tau)} = \prod_{i \in S} {c_i}^{s(\tau)_i}  \]
and we think of $t(\tau), s(\tau) \in \N[S]$ as vectors in $\R^S$.   That $\theta$ is monoidal follows from \cite[Thm.\ 18]{BP}.   Thus, it defines a symmetric monoidal double functor $\graysquare \maps F \lCsp \to D \lCsp$.

One of the simplest consequences of this result is that we can get an open dynamical system equation from an open Petri net with rates, following the ideas in Section \ref{subsec:open_dynamical_system_equation}.  For example, consider this open Petri net with rates:
\[
\begin{tikzpicture}
	\begin{pgfonlayer}{nodelayer}
		\node [style=species] (A) at (-4, 0.5) {\scriptsize $\phantom{l}\text{H}^+\phantom{l}$};
		\node [style=species] (B) at (-4, -0.5) {\scriptsize $\text{OH}^-$};
		\node [style=species] (C) at (-1, 0) {\scriptsize $\text{H}_2\text{O}$};	
                 \node [style=transition] (a) at (-2.5, 0) {$\phantom{|}r\phantom{|}$}; 
		
		\node [style=empty] (X) at (-5.1, 1.2) {$X$};
		\node [style=none] (Xtr) at (-4.75, 0.95) {};
		\node [style=none] (Xbr) at (-4.75, -0.95) {};
		\node [style=none] (Xtl) at (-5.4, 0.95) {};
                 \node [style=none] (Xbl) at (-5.4, -0.95) {};
	
		\node [style=inputdot] (1) at (-5, 0.5) {};
		\node [style=empty] at (-5.2, 0.5) {$1$};
		\node [style=inputdot] (2) at (-5, -0.5) {};
		\node [style=empty] at (-5.2, -0.5) {$2$};

		\node [style=empty] (Y) at (0.1, 1.2) {$Y$};
		\node [style=none] (Ytr) at (.4, 0.95) {};
		\node [style=none] (Ytl) at (-.25, 0.95) {};
		\node [style=none] (Ybr) at (.4, -0.95) {};
		\node [style=none] (Ybl) at (-.25, -0.95) {};

		\node [style=inputdot] (3) at (0, 0) {};
		\node [style=empty] at (0.2, 0) {$3$};		
		
		
	\end{pgfonlayer}
	\begin{pgfonlayer}{edgelayer}
		\draw [style=inarrow] (A) to (a);
		\draw [style=inarrow] (B) to (a);
		\draw [style=inarrow] (a) to (C);
		\draw [style=inputarrow] (1) to (A);
		\draw [style=inputarrow] (2) to (B);
		\draw [style=inputarrow] (3) to (C);
		\draw [style=simple] (Xtl.center) to (Xtr.center);
		\draw [style=simple] (Xtr.center) to (Xbr.center);
		\draw [style=simple] (Xbr.center) to (Xbl.center);
		\draw [style=simple] (Xbl.center) to (Xtl.center);
		\draw [style=simple] (Ytl.center) to (Ytr.center);
		\draw [style=simple] (Ytr.center) to (Ybr.center);
		\draw [style=simple] (Ybr.center) to (Ybl.center);
		\draw [style=simple] (Ybl.center) to (Ytl.center);
	\end{pgfonlayer}
\end{tikzpicture}
\]
gives this open dynamical system equation
\[
\begin{array}{lcl} 
\displaystyle{ \frac{d}{dt} [\text{H}^+]}  &=& - r \, [\text{H}^+] \,  [\text{OH}^-] + I_1(t) \\ [8pt]  
\displaystyle{ \frac{d}{dt} [\text{OH}^-]}  &=& - r  \, [\text{H}^+] \,  [\text{OH}^-] + I_2(t) \\ [8pt]  
\displaystyle{ \frac{d}{dt} [\text{H}_2\text{O}]}  &=&  r \,  [\text{H}^+] \,  [\text{OH}^-] - O_3(t). \\ [8pt]  
\end{array}
\]

\subsection{Applications}
\label{sec:applications}

One might hope that open Petri nets with rates would help mathematical chemists prove new theorems.   So far, things have worked out quite differently.  Their main application so far has been to software for epidemiology!

Shortly after the formulation of decorated and structured cospans, the world was hit with a pandemic.   At this time Fairbanks and Patterson were developing Catlab \cite{Catlab}, a framework for applied and computational category theory, written in the Julia language.    Fairbanks had already implemented decorated cospans in Catlab---but in 2020, Patterson wrote the first version of structured cospans in this framework, specifically so that Fairbanks and Halter could implement open whole-grain Petri nets.    Soon the team applied these to recreate part of the UK's main COVID model \cite{PH}.    There is by now a well-developed software package to build and manipulate open Petri nets with rates in Catlab, called AlgebraicPetri \cite{AlgebraicPetri}.  This works together with a package for open dynamical systems developed by Libkind, called AlgebraicDynamics \cite{AlgebraicDynamics}.   AlgebraicDynamics is able to use either the variable sharing paradigm or the input-output paradigm.   

All this work attracted the attention of Osgood and Li, computational epidemiologists who helped run COVID modeling in Canada.   These experts pointed out that in public health modeling, `stock and flow diagrams' are more widely used than Petri nets with rates.   They are more general in some ways and less so in others.    On the one hand, stock and flow diagrams only allow transitions with either:
\begin{itemize}
\item one place as input and one as output (e.g.\ the transition of a patient from one state to another),
\item one place as input and none as output (e.g.\ death), or
\item no place as input and one as output (e.g\ birth).
\end{itemize}
On the other hand, they can describe dynamics more general than given by the law of mass action, and this is crucial in epidemiology.

Like open Petri nets with rates, open stock and flow diagrams can be described using decorated cospans and work well in CatLab.  Working with the Catlab team, Osgood and Li developed a software package called StockFlow to handle open stock and flow diagrams \cite{BLOP,StockFlow}.   Osgood's student Redekopp also developed a user-friendly web-based front end for StockFlow, called ModelCollab \cite{BLLOR,ModelCollab}.   For expository papers explaining all this software in more detail, see \cite{BLLOR,LMOP}.

This software has the following advantages over traditional software:

\begin{itemize}
\item \define{Compositionality.} Rather than merely a piece of code, each model is a crisply defined mathematical structure designed from the start to be easily combined with other models: for example, a structured or decorated cospan.   This lets models of specific subsystems be constructed individually by different domain experts,  and then composed to form larger models, supporting ongoing collaboration between these parties.   
\item \define{Functorial semantics.}  There is a clear distinction between a model and a specific way of extracting information from this model.    This is achieved by treating different ways of extracting information from models as different functors.
\item \define{Ease of stratification.}  One can `stratify' models---that is, create more detailed models by subdividing stocks in an existing model---without rewriting the whole model from scratch by hand.   This is achieved using pullbacks.
\end{itemize}

So far, the software mentioned above does not make explict the \emph{double} categorical nature of structured or decorated cospans, though it is implicit.   Patterson is working on a new software platform, CatColab \cite{CatColab}, based on `double categorical doctrines'.   However, this use of double categories is different from working with double categories of open systems.   On the other hand, the UK agency ARIA is running a project for safeguarded AI which may develop software using Libkind and Myers' work on double categories of open systems \cite{Libkind,LM,MyersBook}.   We say a bit about their work in Section \ref{subsec:hypergraph}.

Eventually people may prove theorems about the dynamics of open Petri nets with rates, but this has not happened yet.   One reason is that few have tried.   Another is that while there are many interesting theorems in mathematical chemisty \cite{Feinberg1987,Feinberg2019,HJ}, and challenging problems such as the Global Attractor Conjecture \cite{Craciun2015}, the Persistence Conjecture and the Permanence Conjecture \cite{CNP}, few involve Petri nets.   Far more involve `reaction networks'.  

Petri nets and reaction networks are often considered equivalent formalisms.  For example, this Petri net:
\[
\begin{tikzpicture}
	\begin{pgfonlayer}{nodelayer}
		\node [style=species] (B) at (0,0.6) {$\text{B}$};
		\node [style=species] (C) at (0,-0.6) {$\text{C}$};
		\node [style=transition] (alpha) at (-2,0) {$\phantom{\Big|}\alpha\phantom{\Big|}$};
		\node [style=species] (A) at (-4,0) {$\text{A}$};
		\node [style=transition] (beta) at (2,0) {$\phantom{\Big|}\beta\phantom{\Big|}$};
		\node [style=species] (D) at (4,0.6) {$\text{D}$};
		\node [style=species] (E) at (4,-0.6) {$\text{E}$};
	\end{pgfonlayer}
	\begin{pgfonlayer}{edgelayer}
		\draw [style=inarrow, bend left =15, looseness=1.00] (B) to (beta);
		\draw [style=inarrow, bend right=15, looseness=1.00] (C) to (beta);
		\draw [style=inarrow,  bend left=15, looseness=1.00] (beta) to (D);
		\draw [style=inarrow,  bend right=15, looseness=1.00] (beta) to (D);
		\draw [style=inarrow, bend right =15, looseness=1.00] (beta) to (E);
		\draw [style=inarrow, bend right =15, looseness=1.00] (B) to (alpha);
		\draw [style=inarrow, bend left =15, looseness=1.00] (C) to (alpha);
		\draw [style=inarrow] (alpha) to (A);
	\end{pgfonlayer}
\end{tikzpicture}
\]
corresponds to this reaction network:
\[      A \longleftarrow \text{B} + \text{C} \longrightarrow 2 \text{D} + \text{E}. \]
The reaction network is a graph where each edge corresponds to a transition of the Petri net, while each vertex corresponds to a so-called `complex': a sum of species that is the source or target of some transition.  However, we compose open Petri nets by identifying species, while individual species are not a visible part of a reaction network.    This leaves us with two challenges:

\begin{problem}
Prove theorems about the qualitative dynamics of open Petri nets with rates, perhaps building on existing results about Petri nets with rates, for example as in \cite{CMPY,CTF}, where they are called `directed species-reaction graphs'.
\end{problem}

\begin{problem}
Develop a framework for open reaction networks suitable for generalizing existing results on reaction networks and invariants such as the `deficiency' of a reaction network \cite{Feinberg1987,Feinberg2019}.
\end{problem}

\section{The variable sharing paradigm}
\label{sec:variable_sharing}

As mentioned in Section \ref{sec:introduction}, there are several paradigms for open systems.   Here we are solely focused on the `variable sharing paradigm', where we compose open systems by gluing them together, or identifying variables.   While we have tried to explain how structured and decorated cospans let us work within this paradigm, we have not yet analyzed which features are distinctive of this paradigm.   We turn to this question now.

When he first discovered decorated cospans, Fong  was studying electrical circuits \cite{BF,Fong,FongThesis}.
He noticed that electrical circuits can be composed in very flexible ways.   A circuit can have any number of wires coming out of it, with no fundamental distinction between inputs and outputs.   We can compose circuits by arbitrarily connecting their wires.     Furthermore:
\begin{itemize}
\item We can take two wires coming out of a circuit and join them, getting a new circuit with one fewer
wire coming out.  
\[
  \xymatrix{
    \mult{.075\textwidth} 
    }
\]
\item We can add an extra wire to a circuit that doesn't actually connect with anything, getting a new circuit
with one more wire coming out.
\[
  \xymatrix{
    \unit{.075\textwidth} 
    }
\]
\item We can take any wire coming out of a circuit and split it, getting a new circuit with one more wire coming out.
\[
  \xymatrix{
    \comult{.075\textwidth} 
    }
\]
\item We can take a wire coming out of a circuit and cap it off, getting a new circuit with one fewer
wire coming out.
\[
  \xymatrix{
    \counit{.075\textwidth} 
    }
\]
\end{itemize}

These constructions obey all the rules of a `special commutative Frobenius monoid', meaning the associative law:
\[
  \xymatrix{
    \assocr{.1\textwidth} = \assocl{.1\textwidth} 
    }
\]
the left and right unit laws:
\[
  \xymatrix{
   \unitl{.1\textwidth} = \idone{.1\textwidth} =  \unitr{.1\textwidth} 
    }
\]
the commutative law:
\[
\xymatrix{
    \commute{.1\textwidth}  \raisebox{0.2 em}{=}  \mult{.07\textwidth}
  }
\]
 where $\swap{1em}$ denotes two wires crossing over each other, together with the `Frobenius equations':
\[
  \xymatrix{
    \frobs{.1\textwidth} \; \raisebox{0.2 em}{=} \; \frobx{.1\textwidth}  \; \raisebox{0.2 em}{=} \;  
    \frobz{.1\textwidth} 
    }
\]
and finally the `special' law:
\[
  \xymatrix{
    \spec{.1\textwidth} \,\raisebox{0.2 em}{=} \, \idone{.1\textwidth}
    }
\]
By composing some of these operations we can create a `cup' and a `cap':
\[
\raisebox{-0.7 em}{
 \scalebox{0.5}{
\begin{tikzpicture}
		\node [style=none] (0) at (1, -0) {};
		\node [style=none] (1) at (0.125, -0) {};
		\node [style=none] (2) at (-1, 0.5) {};
		\node [style=none] (3) at (-1, -0.5) {};
		\node [style=none] (4) at (1, -0) {};
	\draw[line width = 1.5pt] (2) to [in=0, out=0,looseness = 4] (3);
      \end{tikzpicture}
}
}
\hskip -0.5em
  := \;
 \raisebox{-0.5 em}{
 \scalebox{0.5}{
 \begin{tikzpicture}
	\begin{pgfonlayer}{nodelayer}
		\node [style=none] (0) at (1, -0) {};
		\node [style=circ] (1) at (0.125, -0) {};
		\node [style=none] (2) at (-1, 0.5) {};
		\node [style=none] (3) at (-1, -0.5) {};
		\node [style=circ] (4) at (1, 0) {};
	\end{pgfonlayer}
	\begin{pgfonlayer}{edgelayer}
		\draw[line width=2pt] (0.center) to (1.center);
		\draw[line width=2pt] [in=0, out=120, looseness=1.20] (1.center) to (2.center);
		\draw[line width=2pt] [in=0, out=-120, looseness=1.20] (1.center) to (3.center);
	\end{pgfonlayer}
      \end{tikzpicture}
      }
      }
      \quad 
 \raisebox{-0.5 em}{
 \scalebox{0.5}{
\begin{tikzpicture}
		\node [style=none] (0) at (-.75, -0) {};
		\node [style=none] (1) at (-0.125, -0) {};
		\node [style=none] (2) at (1, 0.5) {};
		\node [style=none] (3) at (1, -0.5) {};
		\node [style=none] (4) at (-1, -0) {};
		\node [style=none] (5) at (-0.5, -.5) {};
		\node [style=none] (6) at (-0.5, .5) {};
	\draw[line width = 1.5pt] (2) to [in=180, out=180,looseness = 4] (3);
\end{tikzpicture}
}
}
  := 
 \raisebox{-0.5 em}{
 \scalebox{0.5}{
 \begin{tikzpicture}
	\begin{pgfonlayer}{nodelayer}
		\node [style=none] (0) at (-1, -0) {};
		\node [style=circ] (1) at (-0.125, -0) {};
		\node [style=none] (2) at (1, 0.5) {};
		\node [style=none] (3) at (1, -0.5) {};
		\node [style=circ] (4) at (-1, -0) {};
	\end{pgfonlayer}
	\begin{pgfonlayer}{edgelayer}
		\draw[line width=2pt] (0.center) to (1.center);
		\draw[line width=2pt] [in=180, out=60, looseness=1.20] (1.center) to (2.center);
		\draw[line width=2pt] [in=180, out=-60, looseness=1.20] (1.center) to (3.center);
	\end{pgfonlayer}
      \end{tikzpicture}
      }
      }
\]
These allow us to bend wires around.  Moreover, thanks to the Frobenius monoid
laws, the cap and cup obey the so-called `zigzag identities':
\[
 \scalebox{0.4}{
\begin{tikzpicture}
	\begin{pgfonlayer}{nodelayer}
		\node [style=none] (0) at (-1.5, 0.5) {};
		\node [style=none] (1) at (-0.75, 0.5) {};
		\node [style=none] (2) at (0.25, -0) {};
		\node [style=none] (3) at (0.25, 1) {};
		\node [style=none] (4) at (1, -0.5) {};
		\node [style=none] (5) at (0, -0) {};
		\node [style=none] (6) at (1.75, -0.5) {};
		\node [style=none] (7) at (0, -1) {};
		\node [style=none] (8) at (1.75, 1) {};
		\node [style=none] (9) at (-1.5, -1) {};
	\end{pgfonlayer}
	\begin{pgfonlayer}{edgelayer}
		\draw[line width=2pt] [in=180, out=-60, looseness=1.20] (1) to (2.center);
		\draw[line width=2pt] [in=180, out=60, looseness=1.20] (1) to (3.center);
		\draw[line width=2pt] [in=0, out=120, looseness=1.20] (4) to (5.center);
		\draw[line width=2pt] [in=0, out=-120, looseness=1.20] (4) to (7.center);
		\draw[line width=2pt] (3.center) to (8.center);
		\draw[line width=2pt] (7.center) to (9.center);
	\end{pgfonlayer}
\end{tikzpicture}
}
\raisebox{0.8em}{
=
}
\raisebox{1em}{
 \scalebox{0.4}{ 
\begin{tikzpicture}
	\begin{pgfonlayer}{nodelayer}
		\node [style=none] (0) at (-1.2, -0) {};
		\node [style=none] (3) at (1.2, -0) {};
	\end{pgfonlayer}
	\begin{pgfonlayer}{edgelayer}
		\draw[line width=2pt] (0) to (3);
	\end{pgfonlayer}
\end{tikzpicture}
}
}
\raisebox{0.8em}{
=
}
 \scalebox{0.4}{
\begin{tikzpicture}
	\begin{pgfonlayer}{nodelayer}
		\node [style=none] (0) at (1.75, 0.5) {};
		\node [style=none] (1) at (1, 0.5) {};
		\node [style=none] (2) at (0, -0) {};
		\node [style=none] (3) at (0, 1) {};
		\node [style=none] (4) at (-0.75, -0.5) {};
		\node [style=none] (5) at (0.25, -0) {};
		\node [style=none] (6) at (-1.5, -0.5) {};
		\node [style=none] (7) at (0.25, -1) {};
		\node [style=none] (8) at (-1.5, 1) {};
		\node [style=none] (9) at (1.75, -1) {};
	\end{pgfonlayer}
	\begin{pgfonlayer}{edgelayer}
		\draw[line width=2pt] [in=0, out=-120, looseness=1.20] (1) to (2.center);
		\draw[line width=2pt] [in=0, out=120, looseness=1.20] (1) to (3.center);
		\draw[line width=2pt] [in=180, out=60, looseness=1.20] (4) to (5.center);
		\draw[line width=2pt] [in=180, out=-60, looseness=1.20] (4) to (7.center);
		\draw[line width=2pt] (3.center) to (8.center);
		\draw[line width=2pt] (7.center) to (9.center);
	\end{pgfonlayer}
\end{tikzpicture}
}
\]
Fong noticed that all these are general features of decorated cospan categories.   Every object in a decorated cospan category is a special commutative Frobenius monoid internal to that category, in a canonical way.   Furthermore, all these Frobenius structures fit together to make the category into a `hypergraph category'---a concept which, while apparently quite complicated, had already proved its importance by independently being discovered by multiple authors in different contexts \cite{Carboni,GH,Morton,Kissinger,RSW2}.   

\begin{defn}
\label{defn:hypergraph_category}
A \define{hypergraph category} is a symmetric monoidal category $(\C,\otimes)$ where each object $a \in \C$ is equipped with a \define{multiplication} $\mu_a \maps a \otimes a \to a$, \define{unit} $\eta_a \maps I \to a$, \define{comultiplication} $\delta_a \maps a \to a \otimes a$, and \define{counit} $\epsilon_a \maps a \to I$,
obeying the laws of a special commutative Frobenius monoid and satisfying
\[ \begin{array}{ccl}
     \mu_{a \otimes b} &=&  ( \mu_a \otimes \mu_b) (1_a \otimes \sigma_{b,a} \otimes 1_b) \\ 
       \eta_{a \otimes b} &=& \eta_a \otimes \eta_b  \\
      \delta_{a \otimes b} &=& (1 \otimes \sigma_{b,a} \otimes 1) (\delta_a \otimes \delta_b) \\
      \epsilon_{a \otimes b} &=& \epsilon_a \otimes \epsilon_b 
      \end{array}
\]
where $\sigma_{a,b} \maps a \otimes b \to b \otimes a$ is the symmetry, as well as
\[             \eta_I = 1_I = \epsilon_I \]
where $I$ is the unit for the tensor product.
\end{defn}

The morphisms in a hypergraph category are \emph{not} required to be Frobenius monoid homomorphisms!   Indeed, this fails in most examples, since a Frobenius monoid homomorphism is automatically an isomorphism.  

The rather curious concept of hypergraph category was greatly clarified when Fong and Spivak gave a slick equivalent definition using operads built from categories of cospans \cite{FS}.   Since they were working with categories rather than double categories, we can only apply their work to a double category $\lD$ by forming the category $\D$ whose morphisms are \emph{isomorphism classes} of loose morphisms in $\lD$.   To get around this, we shall try to recapitulate some of their work using double categories.  In the end this suggests two possible concepts of `hypergraph double category': one that categorifies their definition, and another that categorifies Definition \ref{defn:hypergraph_category}.   

More precisely, in Section \ref{subsec:cospans_of_interfaces} we show that any symmetric monoidal structured or decorated cospan double category $\lD$ has a kind of `exoskeleton' where the loose morphisms are \emph{trivially} structured or decorated cospans.   These can be seen as cospans in $\lD_0$, the category of objects and tight morphisms of $\lD$.    In Section \ref{subsec:outer_shell}, we show that this exoskeleton $\lCsp(\lD_0)$ in turn has an `outer shell' where the loose morphisms are cospans built up using only the objects of $\lD_0$ and the operations present in any category with finite colimits.   More precisely this outer shell is $\lCsp(\tilde{\lD}_0)$, where $\tilde{\lD}_0$ is the free category with finite colimits on the objects of $\lD$.   We show there are symmetric monoidal double functors
\[            \lCsp(\tilde{\lD}_0) \xrightarrow{j} \lCsp(\lD) \xrightarrow{\iota} \lD .\]
In Section \ref{subsec:hypergraph} we use the outer shell to motivate Libkind and Myers' definition of hypergraph double category \cite[Ex.\ 8.11]{LM}, which categorifies Fock and Spivak's slick definition of hypergraph category.
In Section \ref{subsec:frobenius} we show that any object in a symmetric monoidal structured or decorated cospan double category is a categorified version of a commutative Frobenius monoid.   This suggests a route to categorifying the old definition of hypergraph category, Definition \ref{defn:hypergraph_category}.

\subsection{The exoskeleton}
\label{subsec:cospans_of_interfaces}

One common feature of decorated and structured cospans is a certain relationship between `interfaces' and `open systems'.    Namely, in both formalisms, any cospan of interfaces can be seen as a degenerate case of an open system.     For example, a typical open Petri net might look like this:
\[
\begin{tikzpicture}
	\begin{pgfonlayer}{nodelayer}

		\node [style = transition] (b) at (2.5, 0.8) {$\alpha$};
		\node [style = transition] (c) at (2.5, -0.8) {$\beta$};
		\node [style = species] (A) at (1, 0) {$A$};
		\node [style = species] (B) at (4,0.8) {$B$};
		\node [style = species] (C) at (4,-0.8) {$C$};

		\node [style=empty] (Y) at (-0.1, 1.2) {$X$};
		\node [style=none] (Ytr) at (.25, 1) {};
		\node [style=none] (Ytl) at (-.4, 1) {};
		\node [style=none] (Ybr) at (.25, -1) {};
		\node [style=none] (Ybl) at (-.4, -1) {};

		\node [style=inputdot] (1) at (0, 0.3) {};
		\node [style=empty] at (-0.2, 0.3) {$1$};
		\node [style=inputdot] (2) at (0, -0.3) {};
		\node [style=empty] at (-0.2, -0.3) {$2$};		
		
		\node [style=empty] (Z) at (5, 1.2) {$Y$};
		\node [style=none] (Ztr) at (4.75, 1) {};
		\node [style=none] (Ztl) at (5.4, 1) {};
		\node [style=none] (Zbl) at (5.4, -1) {};
		\node [style=none] (Zbr) at (4.75, -1) {};

		\node [style=inputdot] (3) at (5, 0.3) {};
		\node [style=empty] at (5.2, 0.3) {$3$};	
		\node [style=inputdot] (4) at (5, -0.3) {};
		\node [style=empty] at (5.2, -0.3) {$4$};	
		
	\end{pgfonlayer}
	\begin{pgfonlayer}{edgelayer}
		\draw [style=inarrow, bend left=30, looseness=1.00] (A) to (b);
		\draw [style=inarrow] (b) to (B);
		\draw [style=inarrow, bend left=30, looseness=1.00] (c) to (A);
		\draw [style=inarrow] (C) to (c);
		\draw [style=inputarrow] (1) to (A);
		\draw [style=inputarrow] (2) to (A);
		\draw [style=inputarrow] (3) to (B);
		\draw [style=inputarrow] (4) to (A);
		
		\draw [style=simple] (Ytl.center) to (Ytr.center);
		\draw [style=simple] (Ytr.center) to (Ybr.center);
		\draw [style=simple] (Ybr.center) to (Ybl.center);
		\draw [style=simple] (Ybl.center) to (Ytl.center);
		\draw [style=simple] (Ztl.center) to (Ztr.center);
		\draw [style=simple] (Ztr.center) to (Zbr.center);
		\draw [style=simple] (Zbr.center) to (Zbl.center);
		\draw [style=simple] (Zbl.center) to (Ztl.center);
	\end{pgfonlayer}
\end{tikzpicture}
\]
But sometimes an open Petri net has no transitions:
\[
\begin{tikzpicture}
	\begin{pgfonlayer}{nodelayer}

		\node [style = species] (A) at (1, 0) {$A$};
		\node [style = species] (B) at (4,0.8) {$B$};
		\node [style = species] (C) at (4,-0.8) {$C$};

		\node [style=empty] (Y) at (-0.1, 1.2) {$X$};
		\node [style=none] (Ytr) at (.25, 1) {};
		\node [style=none] (Ytl) at (-.4, 1) {};
		\node [style=none] (Ybr) at (.25, -1) {};
		\node [style=none] (Ybl) at (-.4, -1) {};

		\node [style=inputdot] (1) at (0, 0.3) {};
		\node [style=empty] at (-0.2, 0.3) {$1$};
		\node [style=inputdot] (2) at (0, -0.3) {};
		\node [style=empty] at (-0.2, -0.3) {$2$};		
		
		\node [style=empty] (Z) at (5, 1.2) {$Y$};
		\node [style=none] (Ztr) at (4.75, 1) {};
		\node [style=none] (Ztl) at (5.4, 1) {};
		\node [style=none] (Zbl) at (5.4, -1) {};
		\node [style=none] (Zbr) at (4.75, -1) {};

		\node [style=inputdot] (3) at (5, 0.3) {};
		\node [style=empty] at (5.2, 0.3) {$3$};	
		\node [style=inputdot] (4) at (5, -0.3) {};
		\node [style=empty] at (5.2, -0.3) {$4$};	
		
	\end{pgfonlayer}
	\begin{pgfonlayer}{edgelayer}
		\draw [style=inputarrow] (1) to (A);
		\draw [style=inputarrow] (2) to (A);
		\draw [style=inputarrow] (3) to (B);
		\draw [style=inputarrow] (4) to (A);
		
		\draw [style=simple] (Ytl.center) to (Ytr.center);
		\draw [style=simple] (Ytr.center) to (Ybr.center);
		\draw [style=simple] (Ybr.center) to (Ybl.center);
		\draw [style=simple] (Ybl.center) to (Ytl.center);
		\draw [style=simple] (Ztl.center) to (Ztr.center);
		\draw [style=simple] (Ztr.center) to (Zbr.center);
		\draw [style=simple] (Zbr.center) to (Zbl.center);
		\draw [style=simple] (Zbl.center) to (Ztl.center);
	\end{pgfonlayer}
\end{tikzpicture}
\]
A Petri net with no transitions amounts to the same thing as a cospan of finite sets:
\[
\begin{tikzpicture}
	\begin{pgfonlayer}{nodelayer}

	         \node [style=inputdot] (A) at (2.5, 0) {};
		\node [style = empty] at (2.5, 0.3) {$A$};
		\node [style=inputdot] (B) at (2.5, 0.8) {};
		\node [style = empty] at (2.3,0.8) {$B$};
		\node [style=inputdot] (C) at (2.5, -0.8) {};
		\node [style = empty] at (2.5,-0.6) {$C$};
		
		\node [style=empty] (M) at (2.5, 1.2) {$M$};
		\node [style=none] (Xtr) at (2.825, 1) {};
		\node [style=none] (Xtl) at (2.175, 1) {};
		\node [style=none] (Xbl) at (2.175, -1) {};
		\node [style=none] (Xbr) at (2.825, -1) {};

		\node [style=empty] (Y) at (-0.1, 1.2) {$X$};
		\node [style=none] (Ytr) at (.25, 1) {};
		\node [style=none] (Ytl) at (-.4, 1) {};
		\node [style=none] (Ybr) at (.25, -1) {};
		\node [style=none] (Ybl) at (-.4, -1) {};

		\node [style=inputdot] (1) at (0, 0.3) {};
		\node [style=empty] at (-0.2, 0.3) {$1$};
		\node [style=inputdot] (2) at (0, -0.3) {};
		\node [style=empty] at (-0.2, -0.3) {$2$};		
		
		\node [style=empty] (Z) at (5, 1.2) {$Y$};
		\node [style=none] (Ztr) at (4.75, 1) {};
		\node [style=none] (Ztl) at (5.4, 1) {};
		\node [style=none] (Zbl) at (5.4, -1) {};
		\node [style=none] (Zbr) at (4.75, -1) {};

		\node [style=inputdot] (3) at (5, 0.3) {};
		\node [style=empty] at (5.2, 0.3) {$3$};	
		\node [style=inputdot] (4) at (5, -0.3) {};
		\node [style=empty] at (5.2, -0.3) {$4$};	
		
	\end{pgfonlayer}
	\begin{pgfonlayer}{edgelayer}
		\draw [style=inputarrow] (1) to (A);
		\draw [style=inputarrow] (2) to (A);
		\draw [style=inputarrow] (3) to (B);
		\draw [style=inputarrow] (4) to (A);
		
		\draw [style=simple] (Xtl.center) to (Xtr.center);
		\draw [style=simple] (Xtr.center) to (Xbr.center);
		\draw [style=simple] (Xbr.center) to (Xbl.center);
		\draw [style=simple] (Xbl.center) to (Xtl.center);
		\draw [style=simple] (Ytl.center) to (Ytr.center);
		\draw [style=simple] (Ytr.center) to (Ybr.center);
		\draw [style=simple] (Ybr.center) to (Ybl.center);
		\draw [style=simple] (Ybl.center) to (Ytl.center);
		\draw [style=simple] (Ztl.center) to (Ztr.center);
		\draw [style=simple] (Ztr.center) to (Zbr.center);
		\draw [style=simple] (Zbr.center) to (Zbl.center);
		\draw [style=simple] (Zbl.center) to (Ztl.center);
	\end{pgfonlayer}
\end{tikzpicture}
\]
In this example finite sets are playing the role of `interfaces', while open Petri nets are our `open systems'.  So, we are seeing that a cospan of interfaces is a degenerate open system.  The only significant use of this degenerate open system is to glue other open systems together, by composition.  But as we shall see, this is quite important.

We can formalize this fact as follows.   First, since structured and decorated cospan categories behave so similarly, let us lump them together in the following ungainly definition:

\begin{defn}
We define $\lD$ to be a \define{double category of structured/decorated cospans} if it is either
\begin{enumerate}
\item the double category ${}_L \lCsp$ of \emph{structured} cospans arising from a functor $L \maps \A \to \X$ meeting the hypotheses of Theorem \ref{thm:structured_cospan_1}, or
\item  the double category $F \lCsp$ of \emph{decorated} cospans arising from a lax monoidal pseudofunctor $F \maps (\A, +) \to (\Cat, \times)$ meeting the hypotheses of Theorem \ref{thm:decorated_cospan_1}.
\end{enumerate}
In either of these cases, we say $\lD$ is a \define{symmetric monoidal double category of structured/decorated cospans} if 
\begin{enumerate}
\item the hypotheses of Theorem \ref{thm:structured_cospan_2} hold, so $\lD = {}_L \lCsp$ is symmetric monoidal, or
\item the hypotheses of Theorem \ref{thm:decorated_cospan_2} hold,  so $\lD = F\lCsp$ is symmetric monoidal.
\end{enumerate}
\end{defn}

Second, let us note a fact about cospan double categories:

\begin{lem}
\label{lem:cospans}
 For any category $\A$ with pushouts there is a double category $\lCsp(\A)$ in which:
\begin{itemize}
\item objects are objects of $\A$,
\item tight morphisms are morphisms in $\A$, composed in the same way,
\item loose morphisms are cospans in $\A$, composed via pushouts,
\item 2-cells are maps of spans in $\A$, composed via pushouts.
\end{itemize}
When $\A$ has finite colimits, $\lCsp(\A)$ naturally has the structure of a cocartesian double
category, and thus becomes symmetric monoidal using finite coproducts in $\A$.
\end{lem}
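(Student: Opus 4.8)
The plan is to obtain $\lCsp(\A)$ as the special case of a structured cospan double category in which the structuring functor is the identity. Concretely, I would apply Theorem~\ref{thm:structured_cospan_1} to the functor $L = \id_\A \maps \A \to \A$. Its only hypothesis is that the target category have pushouts, which holds by assumption. Unwinding the definitions, an $\id_\A$-structured cospan from $a$ to $b$ is simply a cospan $a \to x \leftarrow b$ in $\A$, a 2-cell is a commuting diagram that is a map of such cospans, composition of tight morphisms is composition in $\A$, and composition of loose morphisms and 2-cells is performed by pushout in $\A$ --- exactly the data claimed. Thus $\lCsp(\A) \mapseqq {}_{\id_\A}\lCsp$ is a (fibrant) double category, and in particular the coherence isomorphisms witnessing weak associativity and unitality of loose composition come for free from the universal property of pushouts.

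For the monoidal part, I would invoke Theorem~\ref{thm:structured_cospan_2} with the same choice $L = \id_\A$. That theorem requires $L$ to preserve finite coproducts, $\A$ to have finite coproducts, and the target category to have finite colimits; with $L$ the identity these reduce to the single hypothesis that $\A$ have finite colimits, which is precisely the assumption in the second sentence of the lemma. Theorem~\ref{thm:structured_cospan_2} then supplies both the cocartesian structure on ${}_{\id_\A}\lCsp = \lCsp(\A)$ and the symmetric monoidal structure built from finite coproducts in $\A$, which is exactly what is asserted.

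The only points needing care are bookkeeping rather than mathematics: one must check that under the identification $L = \id_\A$ the general recipe's ``pushout in $\X$'' really is ``pushout in $\A$'', and that the 2-cells of ${}_{\id_\A}\lCsp$ --- commuting diagrams with $L(f)=f$ and $L(g)=g$ on the feet --- are literally maps of cospans in $\A$ (the statement's phrase ``maps of spans'' should read ``maps of cospans''). I expect no genuine obstacle here; the substance of the lemma is entirely contained in Theorems~\ref{thm:structured_cospan_1} and~\ref{thm:structured_cospan_2}. For a reader who prefers a self-contained argument, the alternative is to verify the pseudo double category axioms directly: loose composition is associative and unital up to the canonical coherent isomorphisms between iterated pushouts, vertical composition is strict, and the interchange law follows from functoriality of pushouts, with the cocartesian structure witnessed by the diagonal and codiagonal cospans --- but the cited theorems make this redundant.
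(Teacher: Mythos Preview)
Your proposal is correct and matches the paper's own proof essentially verbatim: the paper simply says this is the special case of Theorems~\ref{thm:structured_cospan_1} and~\ref{thm:structured_cospan_2} obtained by taking $L \maps \A \to \X$ to be the identity functor. You have supplied more detail than the paper does, and your observation about the ``maps of spans'' typo is correct.
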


\begin{proof}
This is a special case of Theorems \ref{thm:structured_cospan_1} and \ref{thm:structured_cospan_2}, taking $L \maps \A \to \X$ to be the identity functor, but most of these facts were known earlier \cite{Niefield,Shulman2008}.
\end{proof}

Third, for any double category $\lD$, define its \define{tight category} $\lD_0$ to be its category of objects and tight morphisms.  When $\lD$ is a structured/decorated double category, cospans in $\lD_0$ are what we are calling cospans of interfaces.  

With these preliminaries out of the way, we can formalize the idea that a cospan of interfaces is a degenerate sort of open system:

\begin{thm}
\label{thm:structured/decorated_cospan_interfaces}
Let $\lD$ be a double category of structured/decorated cospans.  Then there is a double functor 
\[    \iota \maps \lCsp(\lD_0) \to \lD , \]
unique up to isomorphism, that restricts to the identity functor on $\lD_0$.   If $\lD$ is symmetric monoidal, then $\iota$ can be given the structure of a symmetric monoidal double functor.
\end{thm}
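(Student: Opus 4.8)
The plan is to construct $\iota$ separately in the two cases that make up the definition of a double category of structured/decorated cospans, in each case exploiting the observation that a cospan of interfaces---a loose morphism of $\lCsp(\lD_0)$---is literally a \emph{trivially} structured or decorated cospan, and then to pin $\iota$ down by a companion/conjoint argument.

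In the structured case $\lD = {}_L\lCsp$ for some $L\maps\A\to\X$ as in Theorem~\ref{thm:structured_cospan_1}, so $\lD_0 = \A$ and $\lCsp(\lD_0) = \lCsp(\A) = {}_{\id_\A}\lCsp$. I would apply Theorem~\ref{thm:structured_cospan_functoriality_1} to the square in $\Cat$ with top row $\A\xrightarrow{\id_\A}\A$, bottom row $\A\xrightarrow{L}\X$, both vertical arrows $\id_\A$, and identity filler; this is a legitimate input because $L$ preserves the pushouts used to compose cospans (automatic when $L$ is a left adjoint between finitely cocomplete categories, as in every example, and the natural setting in which $\lCsp(\lD_0)$ is even defined). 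The resulting double functor is the desired $\iota$: it is the identity on $\lD_0$, it sends a cospan $a\xrightarrow{f}c\xleftarrow{g}b$ in $\A$ to the $L$-structured cospan $L(a)\xrightarrow{L(f)}L(c)\xleftarrow{L(g)}L(b)$, and it sends a map of cospans to its image under $L$. When $\lD$ is symmetric monoidal, rereading the same square in $\Rex$ and invoking Theorem~\ref{thm:structured_cospan_functoriality_2} makes $\iota$ a symmetric monoidal double functor.

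In the decorated case $\lD = F\lCsp$ for a lax monoidal pseudofunctor $F\maps(\A,+)\to(\Cat,\times)$ with $\A$ finitely cocomplete, so again $\lD_0 = \A$ and $\lCsp(\lD_0) = \lCsp(\A)$. The monoidal unit of $F$ singles out an object $e\in F(0)$, $0$ the initial object of $\A$; for each $c\in\A$ put $\mathrm{triv}_c := F(!_c)(e)\in F(c)$, the image of $e$ along the unique $!_c\maps 0\to c$. I would define $\iota$ to be the identity on objects and tight morphisms, to send a cospan $a\to c\leftarrow b$ to the same cospan decorated by $\mathrm{triv}_c$, and to send a map of cospans with apex component $h\maps c\to c'$ to the same map carrying the decoration morphism $F(h)(\mathrm{triv}_c)\to\mathrm{triv}_{c'}$ coming from pseudofunctoriality of $F$ applied to $h\circ !_c = !_{c'}$. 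Showing $\iota$ is a pseudo double functor reduces to two facts about $F$, both instances of the lax-monoidal-pseudofunctor axioms recorded in \cite{BCV}: $\phi_{c,c'}(\mathrm{triv}_c,\mathrm{triv}_{c'})\cong\mathrm{triv}_{c+c'}$ (unit--laxator coherence) and $F(h)(\mathrm{triv}_c)\cong\mathrm{triv}_{c'}$ naturally in $h$ (pseudofunctoriality plus uniqueness of maps out of $0$); from these the comparison cells of $\iota$ and their coherence fall out of the recipe for composition in $F\lCsp$. If $F$ is symmetric lax monoidal, one further uses symmetry of the laxator to make $\iota$ symmetric monoidal. Reassuringly, $\mathrm{triv}_S$ is the transition-free Petri net on $S$ in the Petri example and the zero vector field on $\R^S$ in the dynamical example.

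For uniqueness and the final remark: let $G\maps\lCsp(\lD_0)\to\lD$ be any double functor restricting to $\id_{\lD_0}$. Every loose morphism $a\xrightarrow{f}c\xleftarrow{g}b$ of $\lCsp(\lD_0)$ is, canonically, the composite of the companion $f_\ast = (a\xrightarrow{f}c\xleftarrow{\id}c)$ with the conjoint $g^\ast = (c\xrightarrow{\id}c\xleftarrow{g}b)$. Any double functor preserves companions and conjoints (it sends the defining 2-cells to defining 2-cells), and $\lD$ is fibrant (Theorem~\ref{thm:structured_cospan_1}, and similarly in the decorated case), so $G(f_\ast)$ and $G(g^\ast)$ are forced up to canonical isomorphism to be the companion and conjoint in $\lD$ of $G(f) = f$ and $G(g) = g$---exactly what $\iota$ gives; compatibility with loose composition and with 2-cells then produces a unique invertible horizontal natural transformation $G\cong\iota$, and the same argument handles uniqueness in the symmetric monoidal setting. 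The step I expect to be the real work is the decorated case: assembling the comparison and monoidal-structure 2-cells of $\iota$ and checking their coherence means threading through the pseudofunctor coherence data used to build $F\lCsp$ in \cite{BCV}, whereas the structured case is essentially bought from the cited functoriality theorems, and the uniqueness argument, though it must invoke fibrancy, is short.
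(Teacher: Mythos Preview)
Your approach is essentially the paper's: existence is handled separately in the structured and decorated cases via the respective functoriality theorems (Theorems~\ref{thm:structured_cospan_functoriality_1}--\ref{thm:structured_cospan_functoriality_2} and Theorem~\ref{thm:decorated_cospan_functoriality}), and uniqueness via the companion/conjoint universal property of $\lCsp(\A)$, which the paper cites as the Dawson--Par\'e--Pronk result (Lemma~\ref{lem:characterizing_lCsp(A)}) rather than reproving directly as you do.

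Two small corrections. First, in your structured-case square the vertical arrows cannot \emph{both} be $\id_\A$: with top row $\id_\A\maps\A\to\A$ and bottom row $L\maps\A\to\X$, the right vertical $F_1$ must be $L\maps\A\to\X$, and it is the hypothesis that $F_1=L$ preserves pushouts that makes Theorem~\ref{thm:structured_cospan_functoriality_1} applicable. Second, the paper explicitly does \emph{not} claim uniqueness of the symmetric monoidal structure on $\iota$ (see the discussion after Lemma~\ref{lem:characterizing_lCsp(A)} and the open problem there), so your closing remark that ``the same argument handles uniqueness in the symmetric monoidal setting'' goes beyond what is stated and would need additional work.
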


\begin{proof}
We divide the proof into the three lemmas, which occupy the rest of this section.  Lemma \ref{lem:structured_cospan_interfaces} proves existence for structured cospans, Lemma \ref{lem:decorated_cospan_interfaces} proves existence for decorated cospans, and Lemma \ref{lem:characterizing_lCsp(A)} proves uniqueness up to isomorphism in both cases.
\end{proof}

\begin{lem}
\label{lem:structured_cospan_interfaces}
Let $L \maps \A \to \X$ be a functor, $\X$ a category with pushouts, and ${}_L\lCsp$ the double category of $L$-structured cospans.   Then there is a double functor $\iota \maps \lCsp(\A) \to {}_L\lCsp$ such that
\begin{itemize}
\item  $\iota$ acts as the identity on objects and tight morphisms,
\item $\iota$ acts as $L$ on loose morphisms and 2-cells: it maps any loose morphism
\[
\scalebox{1.2}{
\begin{tikzpicture}[scale=1.0]
\node (A) at (0,0) {$a$};
\node (B) at (1,0) {$m$};
\node (C) at (2,0) {$b$};
\path[->,font=\scriptsize,>=angle 90]
(A) edge node[above,pos=0.5]{$i$} (B)
(C)edge node[above,pos=0.4]{$o$}(B);
\end{tikzpicture}
}
\]
in $\lCsp(\A)$ to the structured cospan
\[
\scalebox{1.2}{
\begin{tikzpicture}[scale=1.0]
\node (A) at (0,0) {$L(a)$};
\node (B) at (1.5,0) {$L(m)$};
\node (C) at (3,0) {$L(b)$,};
\path[->,font=\scriptsize,>=angle 90]
(A) edge node[above,pos=0.5]{$L(i)$} (B)
(C) edge node[above,pos=0.5]{$L(o)$}(B);
\end{tikzpicture}
}
\]
and it maps any 2-cell
\[
\scalebox{1.2}{
\begin{tikzpicture}[scale=1.5]
\node (A) at (0,0) {$a$};
\node (B) at (1,0) {$m$};
\node (C) at (2,0) {$b$};
\node (A') at (0,-1) {$a'$};
\node (B') at (1,-1) {$m'$};
\node (C') at (2,-1) {$b'$};
\path[->,font=\scriptsize,>=angle 90]
(A) edge node[above]{$i$} (B)
(C) edge node[above]{$o$} (B)
(A') edge node[below]{$i'$} (B')
(C') edge node[below]{$o'$} (B')
(A) edge node [left]{$f$} (A')
(B) edge node [left]{$\alpha$} (B')
(C) edge node [right]{$g$} (C');
\end{tikzpicture}
}
\]
in $\lCsp(\A)$ to the map of structured cospans
\[
\scalebox{1.2}{
\begin{tikzpicture}[scale=1.5]
\node (A) at (0,0) {$L(a)$};
\node (B) at (1.2,0) {$L(m)$};
\node (C) at (2.4,0) {$L(b)$};
\node (A') at (0,-1) {$L(a')$};
\node (B') at (1.2,-1) {$L(m')$};
\node (C') at (2.4,-1) {$L(b')$.};
\path[->,font=\scriptsize,>=angle 90]
(A) edge node[above]{$L(i)$} (B)
(C) edge node[above]{$L(o)$} (B)
(A') edge node[below]{$L(i')$} (B')
(C') edge node[below]{$L(o')$} (B')
(A) edge node [left]{$L(f)$} (A')
(B) edge node [left]{$L(\alpha)$} (B')
(C) edge node [right]{$L(g)$} (C');
\end{tikzpicture}
}
\]
\end{itemize}
If $\A$ and $\X$ also have finite coproducts and $L$ preserves them, then $\iota \maps \lCsp(\A) \to {}_L\lCsp$ can be given the structure of a symmetric monoidal double functor.
\end{lem}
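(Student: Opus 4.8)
The plan is to construct $\iota$ directly from the functor $L$. Since $\lCsp(\A)$ is only defined when $\A$ has pushouts, I take that as part of the standing hypotheses (it is equally needed for $\lCsp(\lD_0)$ in Theorem~\ref{thm:structured/decorated_cospan_interfaces}); note also that $\lCsp(\A)$ is itself the structured cospan double category ${}_{\id_\A}\lCsp$, so when $L$ happens to preserve pushouts the desired $\iota$ is a special case of Theorem~\ref{thm:structured_cospan_functoriality_1}, applied to the square with horizontal functors $\id_\A$ and $L$, both vertical functors $\id_\A$ and $L$, and identity natural transformation — and the real work is to cover the general case by hand. First I would write down the data of $\iota$ exactly as prescribed: $\iota$ is the identity functor $\A \to \A$ on objects and tight morphisms, and on loose morphisms and 2-cells it applies $L$ to every object and morphism occurring in a cospan, respectively in a map of cospans. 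Functoriality of $L$ makes $\iota_0 = \id_\A$ a functor and $\iota_1$ a functor between the categories of loose morphisms and 2-cells (it respects vertical composition of 2-cells because $L$ does). Compatibility with the source and target functors is strict because $\iota_0$ is the identity and the structured cospan $(L(a) \xrightarrow{L(i)} L(m) \xleftarrow{L(o)} L(b))$ is by definition a loose morphism from $a$ to $b$; compatibility with the loose-identity functor is strict because $L$ preserves identities, so $\iota$ carries the identity cospan $a = a = a$ to the identity loose morphism on $\iota(a) = a$, which has apex $L(a)$.

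Next I would supply the comparison data of a double functor and check its axioms. The unitor is the identity 2-cell, by the last remark. For the compositor, take composable loose morphisms $M = (a \xrightarrow{i} m \xleftarrow{o} b)$ and $N = (b \xrightarrow{i'} m' \xleftarrow{o'} c)$ in $\lCsp(\A)$; the loose composite $\iota(N)\odot\iota(M)$ has apex the pushout $L(m) +_{L(b)} L(m')$ formed in $\X$, whereas $\iota(N\odot M)$ has apex $L(m +_b m')$. Applying $L$ to the pushout coprojections $m \to m +_b m' \leftarrow m'$ yields a cocone in $\X$ under $L(m) \xleftarrow{L(o)} L(b) \xrightarrow{L(i')} L(m')$, hence a unique induced map $\mu_{M,N} \maps L(m) +_{L(b)} L(m') \to L(m +_b m')$; functoriality of $L$ together with the defining equations of the coprojections shows that $\mu_{M,N}$ intertwines the two legs of the cospans, so it is a map of $L$-structured cospans, i.e.\ a globular 2-cell $\iota(N)\odot\iota(M) \To \iota(N\odot M)$. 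Its naturality in $M$ and $N$, and the two coherence axioms for a double functor — associativity of the compositor on a triple of composable loose morphisms, and its compatibility with the unitors — all reduce to the uniqueness of maps out of a pushout. The one subtlety is that ${}_L\lCsp$ is a pseudo double category, so its loose composition carries associativity and unit coherence isomorphisms; but these too are determined by pushout universal properties and thread through the $\mu_{M,N}$ in the only possible way. I expect this bookkeeping to be the most laborious part of the argument, although it is entirely formal. I would also record that $\mu_{M,N}$ is invertible exactly when $L$ preserves the pushout $m +_b m'$, so that $\iota$ is a pseudo double functor precisely when $L$ preserves pushouts, and a lax double functor in general.

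Finally, for the symmetric monoidal statement, assume in addition that $\A$ and $\X$ have finite coproducts and $L$ preserves them. Then $\lCsp(\A)$ is symmetric monoidal by Lemma~\ref{lem:cospans} and ${}_L\lCsp$ by Theorem~\ref{thm:structured_cospan_2}, the tensor products being coproducts in $\A$ and $\X$ respectively. On objects and tight morphisms $\iota_0 = \id_\A$ is strictly symmetric monoidal, since a coproduct in $\A$ is sent to itself. For loose morphisms $M = (a \to m \leftarrow b)$ and $M' = (a' \to m' \leftarrow b')$ the required comparison $\iota(M)\otimes\iota(M') \To \iota(M\otimes M')$ is a map of $L$-structured cospans from apex $L(m) + L(m')$ to apex $L(m + m')$, and the canonical such map is an isomorphism because $L$ preserves coproducts. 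It then remains to verify the symmetric monoidal double functor axioms: compatibility of this comparison with the associator, unitors and symmetry of the tensor products — immediate from $L$ being a strong symmetric monoidal functor with respect to coproducts — and compatibility with the compositor and unitor of the underlying double functor, where on both sides one obtains the unique canonical map determined by a combination of pushout and coproduct universal properties, and these therefore agree. Carrying out these diagram chases is routine, and it yields the desired symmetric monoidal double functor $\iota$, pseudo when $L$ preserves pushouts and lax in general.
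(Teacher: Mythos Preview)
Your approach is correct and in its core idea matches the paper's: the paper's entire proof is the one-line observation that $\iota$ is the instance of Theorems~\ref{thm:structured_cospan_functoriality_1} and~\ref{thm:structured_cospan_functoriality_2} arising from the square
\[
\begin{tikzpicture}[scale=1.5]
\node (A) at (0,0) {$\A$};
\node (B) at (1,0) {$\A$};
\node (C) at (0,-1) {$\A$};
\node (D) at (1,-1) {$\X$};
\node (E) at (0.5,-0.5) {$\Downarrow 1$};
\path[->,font=\scriptsize,>=angle 90]
(A) edge node[above]{$1$} (B)
(B) edge node [right]{$L$} (D)
(C) edge node [above] {$L$} (D)
(A) edge node[left]{$1$}(C);
\end{tikzpicture}
\]
which is exactly the square you wrote down. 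So the ``special case'' you identify \emph{is} the paper's proof in full.

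Where you differ is in going further: you observe that Theorem~\ref{thm:structured_cospan_functoriality_1} literally requires $F_1 = L$ to preserve pushouts (and Theorem~\ref{thm:structured_cospan_functoriality_2} requires the square to live in $\Rex$, hence $L$ to preserve finite colimits), whereas the lemma as stated asks for neither. You then supply a direct construction of the compositor $\mu_{M,N}\colon L(m)+_{L(b)}L(m') \to L(m+_b m')$ and note that it is invertible precisely when $L$ preserves the relevant pushout, so that $\iota$ is pseudo in that case and only lax in general. This is more careful than the paper's argument, and the lax case dovetails nicely with Lemma~\ref{lem:characterizing_lCsp(A)}, which is stated for normal lax double functors. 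Your treatment of the symmetric monoidal structure via the coproduct comparison $L(m)+L(m')\cong L(m+m')$ is likewise correct and slightly more explicit than the paper's bare citation.
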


\begin{proof}
This follows from Theorems \ref{thm:structured_cospan_functoriality_1} and \ref{thm:structured_cospan_functoriality_2} on maps between structured cospan categories, applied to this square in $\Rex$:
\[
\scalebox{1.2}{
\begin{tikzpicture}[scale=1.5]
\node (A) at (0,0) {$\A$};
\node (B) at (1,0) {$\A$};
\node (C) at (0,-1) {$\A$};
\node (D) at (1,-1) {$\X$.};
\node (E) at (0.5,-0.5) {$\Downarrow 1$};
\path[->,font=\scriptsize,>=angle 90]
(A) edge node[above]{$1$} (B)
(B) edge node [right]{$F$} (D)
(C) edge node [below] {$F$} (D)
(A)edge node[left]{$1$}(C);
\end{tikzpicture}
}
\qedhere
\] 
\end{proof}

In the decorated cospan case we use the fact that given a lax monoidal pseudofunctor $F \maps (\A,+) \to (\Cat,\times)$, any object $m \in \A$ has a blandest possible decoration, the \define{empty decoration} $v_m \in F(m)$.   This is the object of $F(m)$ given by the composite
\[
\scalebox{1.2}{
\begin{tikzpicture}[scale=1.5]
\node (A) at (0,0) {$1$};
\node (B) at (1,0) {$F(0)$};
\node (C) at (2,0) {$F(m)$};
\path[->,font=\scriptsize,>=angle 90]
(A) edge node[above]{$\phi_0$} (B)
(B) edge node[above]{$F(!)$}(C);
\end{tikzpicture}
}
\]
where $\phi_0$ is the unitor for the monoidal structure on $F$ and $!$ is the unique morphism from $0$ to $m$.   The empty decoration is functorial: in fact, to any morphism $\alpha \maps m \to m'$ in $\A$ we can assign a decoration morphism $F(\alpha)(v_m) \to v_{m'}$ which is simply the identity, since $F(\alpha)(v_m) = v_{m'}$.

\begin{lem}
\label{lem:decorated_cospan_interfaces}
Let $\A$ be a category with finite colimits, $F \maps (\A,+) \to (\Cat,\times)$ a lax monoidal pseudofunctor, and $F\lCsp$ the double category of $F$-decorated cospans.  Then there is a double functor $i \maps \lCsp(\A) \to F\lCsp$ such that
\begin{itemize}
\item  $\iota$ acts as the identity on objects and tight morphisms,
\item $\iota$ equips any loose morphism with the empty decoration, and any 2-cell with the identity morphism of decorations: it maps any loose morphism
\[
\scalebox{1.2}{
\begin{tikzpicture}[scale=1.0]
\node (A) at (0,0) {$a$};
\node (B) at (1,0) {$m$};
\node (C) at (2,0) {$b$};
\path[->,font=\scriptsize,>=angle 90]
(A) edge node[above,pos=0.5]{$i$} (B)
(C)edge node[above,pos=0.4]{$o$}(B);
\end{tikzpicture}
}
\]
in $\lCsp(\A)$ to the decorated cospan
\[
\scalebox{1.2}{
\begin{tikzpicture}[scale=1.0]
\node (A) at (0,0) {$a$};
\node (B) at (1,0) {$m$};
\node (C) at (2,0) {$b$};
\node (D) at (4,0) {$v_m \in F(m)$};
\path[->,font=\scriptsize,>=angle 90]
(A) edge node[above,pos=0.5]{$i$} (B)
(C) edge node[above,pos=0.5]{$o$}(B);
\end{tikzpicture}
}
\]
and it maps any 2-cell
\[
\scalebox{1.2}{
\begin{tikzpicture}[scale=1.2]
\node (A) at (0,0) {$a$};
\node (B) at (1,0) {$m$};
\node (C) at (2,0) {$b$};
\node (A') at (0,-1) {$a'$};
\node (B') at (1,-1) {$m'$};
\node (C') at (2,-1) {$b'$};
\path[->,font=\scriptsize,>=angle 90]
(A) edge node[above]{$i$} (B)
(C) edge node[above]{$o$} (B)
(A') edge node[below]{$i'$} (B')
(C') edge node[below]{$o'$} (B')
(A) edge node [left]{$f$} (A')
(B) edge node [left]{$\alpha$} (B')
(C) edge node [right]{$g$} (C');
\end{tikzpicture}
}
\]
in $\lCsp(\A)$ to the map of decorated cospans
\[
\scalebox{1.2}{
\begin{tikzpicture}[scale=1.2]
\node (A) at (0,0) {$a$};
\node (B) at (1,0) {$m$};
\node (C) at (2,0) {$b$};
\node (D) at (3,0) {$v_m \in F(m)$};
\node (A') at (0,-1) {$a'$};
\node (B') at (1,-1) {$m'$};
\node (C') at (2,-1) {$b'$};
\node (D) at (3,-1) {$v_{m'} \in F(m')$};
\path[->,font=\scriptsize,>=angle 90]
(A) edge node[above]{$i$} (B)
(C) edge node[above]{$o$} (B)
(A') edge node[below]{$i'$} (B')
(C') edge node[below]{$o'$} (B')
(A) edge node [left]{$f$} (A')
(B) edge node [left]{$\alpha$} (B')
(C) edge node [right]{$g$} (C');
\end{tikzpicture}
}
\]
with the decoration morphism $F(\alpha)(v_m) \to v_{m'}$ given by the identity.
\end{itemize}
If $F$ is a symmetric lax monoidal pseudofunctor, then $\iota \maps \lCsp(\A) \to F\lCsp$ can be given the structure of a symmetric monoidal double functor. 
\end{lem}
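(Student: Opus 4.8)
The plan is to mimic the proof of Lemma \ref{lem:structured_cospan_interfaces}: exhibit $\lCsp(\A)$ itself as a decorated cospan double category and then invoke the functoriality result, Theorem \ref{thm:decorated_cospan_functoriality}. The key observation is that $\lCsp(\A)$ is the double category $F_0\lCsp$ of $F_0$-decorated cospans, where $F_0 \maps (\A,+) \to (\Cat,\times)$ is the terminal pseudofunctor, sending each object of $\A$ to the terminal category $1$, with laxator and unitor the unique functors into $1$. Unwinding Theorem \ref{thm:decorated_cospan_1} for this $F_0$ shows that an $F_0$-decorated cospan is just a cospan in $\A$ (its apex carrying the only possible decoration), that a $2$-cell carries the unique decoration morphism, and that composition reduces to the pushout composition of cospans; so $F_0\lCsp$ and $\lCsp(\A)$ coincide. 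Note that $F_0$ is even symmetric strong monoidal, since $1 \times 1 \cong 1$ in $\Cat$, so it meets the hypotheses of Theorems \ref{thm:decorated_cospan_1} and \ref{thm:decorated_cospan_2}.

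I would then apply Theorem \ref{thm:decorated_cospan_functoriality} to the square
\[
\scalebox{1.2}{
\begin{tikzpicture}[scale=1.5]
\node (A) at (0,0) {$\A$};
\node (B) at (1,0) {$\Cat$};
\node (C) at (0,-1) {$\A$};
\node (D) at (1,-1) {$\Cat$};
\node (E) at (0.5,-0.5) {$\Downarrow \theta$};
\path[->,font=\scriptsize,>=angle 90]
(A) edge node[above]{$F_0$} (B)
(A) edge node[left]{$1$} (C)
(B) edge node[right]{$1$} (D)
(C) edge node[below]{$F$} (D);
\end{tikzpicture}
}
\]
with $H$ the identity $\A \to \A$, $E$ the identity $\Cat \to \Cat$, and $\theta \maps F_0 \To F$ the pseudonatural transformation assembled from the empty decorations: its component at $m \in \A$ is the functor $1 \to F(m)$ picking out $v_m$, and, since $F(\alpha)(v_m) = v_{m'}$ for every $\alpha \maps m \to m'$ (as noted just before the statement of this lemma), its naturality constraints may all be taken to be identities. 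The monoidality of $\theta$ — its compatibility with the laxators $\phi_{m,m'} \maps F(m)\times F(m') \to F(m+m')$, with the unitors, and with the associativity and symmetry constraints — follows from the construction of $v_m$ as $F(!)\circ\phi_0$ together with the coherence axioms of the lax monoidal pseudofunctor $F$; concretely one checks that $v_{m+m'}$ and $\phi_{m,m'}(v_m,v_{m'})$ agree up to the canonical isomorphism built from $\phi_0$ and the pseudofunctoriality of $F$.

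Theorem \ref{thm:decorated_cospan_functoriality} then yields a double functor $\iota \maps F_0\lCsp = \lCsp(\A) \to F\lCsp$, and, tracing through the construction of $\Theta$ recorded in \cite[Thm.\ 2.5]{BCV}, one sees that it has exactly the stated action: on objects and tight morphisms it is the identity $H$; it equips a loose morphism $a \to m \leftarrow b$ of $\lCsp(\A)$ with $E$ applied to the trivial decoration, transported along $\theta_m$, which is $v_m$; and it sends a $2$-cell whose middle leg is $\alpha \maps m \to m'$ to the corresponding map of decorated cospans whose decoration morphism is $\theta$ applied to the unique decoration morphism of $F_0$, namely the identity $F(\alpha)(v_m) = v_{m'}$. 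For the final sentence of the lemma: $F_0$ and $E$ are symmetric and, when $F$ is symmetric lax monoidal, $\theta$ is a symmetric monoidal pseudonatural transformation, so the symmetric monoidal half of Theorem \ref{thm:decorated_cospan_functoriality} applies and gives $\iota$ the structure of a symmetric monoidal double functor.

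The main obstacle is the middle step: verifying carefully that the empty-decoration assignment $m \mapsto v_m$ really constitutes a \emph{monoidal} pseudonatural transformation $F_0 \To F$, i.e.\ that $v_{m+m'} \cong \phi_{m,m'}(v_m,v_{m'})$ coherently and compatibly with all the monoidal constraints, and then that feeding this $\theta$ into the construction of $\Theta$ reproduces on the nose the empty decoration on loose morphisms and the identity decoration morphism on $2$-cells, rather than merely something isomorphic to it. Both are routine given the coherence machinery already set up in \cite{BCV}, but that is where the genuine content of the verification lies.
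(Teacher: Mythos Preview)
Your proposal is correct and takes essentially the same approach as the paper: the paper's proof is the single line ``This follows from Theorem \ref{thm:decorated_cospan_functoriality},'' and you have correctly unpacked how that theorem is to be applied, namely by realizing $\lCsp(\A)$ as $F_0\lCsp$ for the terminal pseudofunctor $F_0$ and taking $\theta$ to be the empty-decoration transformation $F_0 \Rightarrow F$ with $H$ and $E$ both identities. Your identification of the main verification---that $\theta$ is a monoidal (pseudo)natural transformation---is exactly the content the paper leaves implicit.
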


\begin{proof}
This follows from Theorem \ref{thm:decorated_cospan_functoriality}.
\end{proof}

To prove the uniqueness up to isomorphism in Theorem \ref{thm:structured/decorated_cospan_interfaces}, we can use a nice characterization of $\lCsp(\A)$ due to Dawson, Par\'e and Pronk \cite{DPP2010}.  This says roughly that for any category $\A$ with pushouts, $\lCsp(\A)$ is the free fibrant double category having $\A$ as its tight category.

To state this result more precisely, recall that a \define{lax} double functor between double categories, say $F \maps \lC \to \lD$, is like an ordinary (i.e., pseudo) double functor except that the laxator $F(f) \circ F(g) \Rightarrow F(f \circ g)$ and unitor $1 \Rightarrow F(1)$ for composition of loose morphisms are not required to be invertible.   A lax double functor is \define{normal} if its unitor is invertible.   Among the lax double functors, the normal ones are precisely those that preserve companions and conjoints \cite[Prop.\ 3.8]{DPP2010}.

\begin{lem}
\label{lem:characterizing_lCsp(A)}
Let $\A$ be a category with pushouts and $\lD$ a fibrant double category.  Then composing with the inclusion $\A \to \lCsp(\A)$ gives an equivalence between the category of normal lax double functors 
\[       \lCsp(\A) \to \lD \]
and the category of functors 
\[  \A \to \lD_0 . \]
\end{lem}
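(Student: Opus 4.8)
The plan is to identify $\lCsp(\A)$ as the free fibrant double category on $\A$ relative to normal lax double functors, along the lines of Dawson--Par\'e--Pronk, and to prove the asserted equivalence by writing down an explicit quasi-inverse together with a direct fully-faithfulness check. Write $R$ for the functor sending a normal lax double functor $\Phi\maps\lCsp(\A)\to\lD$ to its action $\Phi_0\maps\A\to\lD_0$ on objects and tight morphisms; since the tight category of $\lCsp(\A)$ is $\A$, this is exactly ``composing with the inclusion $\A\hookrightarrow\lCsp(\A)$''. I will exhibit a functor $S$ with $RS=\id$ (giving essential surjectivity of $R$) and then show $R$ is fully faithful; together these say $R$ is an equivalence.

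For the section, start from the observation that in $\lCsp(\A)$ the companion of a tight morphism $f\maps a\to m$ is the cospan $a\xrightarrow{f}m\xleftarrow{1_m}m$ and its conjoint is $m\xrightarrow{1_m}m\xleftarrow{f}a$, so that an arbitrary loose morphism, a cospan $a\xrightarrow{i}m\xleftarrow{o}b$, is canonically the loose composite $i_*\odot o^*$: the companion of $i$ followed by the conjoint of $o$. Given $G\maps\A\to\lD_0$, define $SG=\hat G\maps\lCsp(\A)\to\lD$ to agree with $G$ on objects and tight morphisms and to send $a\xrightarrow{i}m\xleftarrow{o}b$ to $G(i)_*\odot G(o)^*$, companions and conjoints now being taken in $\lD$ (available since $\lD$ is fibrant), with the action on $2$-cells determined by the universal properties of companions and conjoints. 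The unitor of $\hat G$ is invertible, since the identity loose morphism $a\xrightarrow{1_a}a\xleftarrow{1_a}a$ is sent to $(1_{G(a)})_*\odot(1_{G(a)})^* = \id_{G(a)}\odot\id_{G(a)}$, whose comparison to $\id_{G(a)}$ is the unitor of $\lD$; hence $\hat G$ is normal. Its laxator for the composite of cospans $a\xrightarrow{i}m\xleftarrow{o}b$ and $b\xrightarrow{i'}m'\xleftarrow{o'}c$ is assembled from the functoriality isomorphisms $(g\circ f)_*\cong f_*\odot g_*$ and $(g\circ f)^*\cong g^*\odot f^*$ in $\lD$ together with the base-change $2$-cell $G(o)^*\odot G(i')_*\Rightarrow G(j)_*\odot G(j')^*$ attached to the commuting square $G(j)\circ G(o)=G(j')\circ G(i')$ --- the image under $G$ of the pushout square defining $m+_b m'$, with inclusions $j,j'$. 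Crucially this square commutes by mere functoriality of $G$, so $G$ need not preserve the pushout. Verifying that $\hat G$ with these data satisfies the coherence axioms of a lax double functor, and that $R\hat G=G$, completes the description of $S$.

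For fully faithfulness I will use the fact, recorded in the excerpt via \cite[Prop.\ 3.8]{DPP2010}, that a normal lax double functor preserves companions and conjoints. Thus for any normal lax $\Phi$ the value $\Phi(i_*\odot o^*)$ is canonically $\Phi_0(i)_*\odot\Phi_0(o)^*$, and on $2$-cells every value of $\Phi$ is built from binding cells of companions and conjoints and loose composites; so $\Phi$ is determined, up to canonical isomorphism and compatibly with its laxators, by $\Phi_0$. Given a natural transformation $\theta\maps\Phi_0\Rightarrow\Psi_0$, its naturality square $\theta_m\circ\Phi_0(i)=\Psi_0(i)\circ\theta_a$ induces, by the universal property of companions, a canonical $2$-cell with frame $(\Phi_0(i)_*,\Psi_0(i)_*,\theta_a,\theta_m)$, and dually for conjoints; composing these along loose composites extends $\theta$ to a transformation $\Phi\Rightarrow\Psi$, and this extension is the unique one since the components on companions and conjoints are forced. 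Hence $R$ is full and faithful, so $R$ is an equivalence.

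The main obstacle I anticipate is not conceptual but is the coherence bookkeeping in the construction of $\hat G$: checking that the laxator built from the companion/conjoint comparison isomorphisms and the base-change $2$-cells is associative and compatible with the unitors, and that $\hat G$ is functorial on $2$-cells. A point worth flagging is the role of laxness: because $G$ is an arbitrary functor, the base-change $2$-cells above are built from the (co)units of the companion/conjoint adjunctions in $\lD$, which are not invertible in general, so $\hat G$ is genuinely lax --- it becomes pseudo precisely in the situation of Theorem \ref{thm:structured_cospan_functoriality_1}, where the relevant functor preserves pushouts. If a fully detailed verification of these coherences is judged too long, the clean alternative is to invoke \cite{DPP2010} directly for the universal property and to record the companion/conjoint description above merely as the explicit form taken by the equivalence.
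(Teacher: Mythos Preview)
Your proposal is correct, and indeed the ``clean alternative'' you flag at the end is exactly what the paper does: its proof is a one-line citation of \cite[Thm.\ 3.15]{DPP2010}, dualized from spans to cospans. So there is no genuine gap in your argument, only a difference in how much is spelled out.

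That said, the comparison is worth recording. The paper treats the lemma as a known black box, while you unpack the content of the DPP result by exhibiting the explicit quasi-inverse $G\mapsto\hat G$ built from companions and conjoints in $\lD$, together with the base-change $2$-cells coming from the pushout squares. Your write-up makes visible \emph{why} fibrancy of $\lD$ is the essential hypothesis (it supplies the companions and conjoints needed to define $\hat G$), \emph{why} normality is needed (it is equivalent to preservation of companions and conjoints, which forces any normal lax $\Phi$ to agree with $\widehat{\Phi_0}$), and \emph{why} the extension is only lax rather than pseudo when $G$ does not preserve pushouts. This last point connects the lemma directly to Theorem~\ref{thm:structured_cospan_functoriality_1}, where pushout-preservation is exactly what upgrades the laxator to an isomorphism. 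The cost of your approach is the coherence bookkeeping you identify; the paper avoids this by outsourcing it to \cite{DPP2010}. One small omission: you do not say what the morphisms are in the two categories being compared (tight transformations of lax double functors on one side, ordinary natural transformations on the other), though your fully-faithfulness argument implicitly handles the right notion.
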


\begin{proof}
This is \cite[Thm.\ 3.15]{DPP2010}, dualized to apply to cospans. 
\end{proof}

Using this lemma and setting $\A = \lD_0$, we see that under the hypotheses of Theorem \ref{thm:structured/decorated_cospan_interfaces}, the double functor $\iota \maps \lCsp(\lD_0) \to \lD$ is characterized uniquely up to isomorphism by the property that it restricts to the identity on $\lD_0$.    This completes the proof of Theorem \ref{thm:structured/decorated_cospan_interfaces}.

The perceptive reader will notice that most of this proof could have been quickly dispatched using Lemma \ref{lem:characterizing_lCsp(A)}.   However, this lemma does not cover the symmetric monoidal aspects, and our statement of Theorem \ref{thm:structured/decorated_cospan_interfaces} does not claim any uniqueness for the symmetric monoidal structure of $\iota$.  It may be possible to adapt the lemma to deal with this.

\begin{problem}
Prove a result similar to Lemma \ref{lem:characterizing_lCsp(A)} that applies when $\A$ has finite colimits, characterizing $\lCsp(\A)$ as a symmetric monoidal double category in terms of a left universal property.  For example, perhaps $\lCsp(\A)$ is the free fibrant symmetric monoidal double category having $\A$ as its tight category.
\end{problem}

However, we wanted to give explicit descriptions of the double functor $\iota \maps \lCsp(\lD_0) \to \lD$ for structured and decorated cospan categories, to make it crystal clear how cospans of interfaces can be seen as open systems in either of these formalisms.

\subsection{The outer shell}
\label{subsec:outer_shell}

We have shown that any symmetric monoidal structured/decorated cospan category $\lD$ has a kind of `exoskeleton' involving only cospans of interfaces, namely the double category $\lCsp(\lD_0)$.    However, we can trim this down further by replacing $\lD_0$ with the free category with finite colimits on the set of objects of $\lD_0$.    This gives what could be called the `outer shell' of $\lD$: the double category containing only those cospans that can be defined using the objects of $\lD$ and finite colimits.  Examples include the following:
\[
  \xymatrixrowsep{-15pt}
  \xymatrixcolsep{2pt}
  \xymatrix{
  \scalebox{1.2}{
\begin{tikzpicture}[scale=0.8]
\node (A) at (0,0) {$0$};
\node (B) at (1,1) {$a$};
\node (C) at (2,0) {$a$};
\path[->,font=\scriptsize,>=angle 90]
(A) edge node[above,pos=0.3]{$!$} (B)
(C)edge node[above,pos=0.25]{$1$}(B);
\end{tikzpicture}
} 
&
\scalebox{1.2}{
\begin{tikzpicture}[scale=0.8]
\node (A) at (0,0) {$a + a$};
\node (B) at (1,1) {$a$};
\node (C) at (2,0) {$a$};
\path[->,font=\scriptsize,>=angle 90]
(A) edge node[above,pos=0.3]{$\nabla$} (B)
(C)edge node[above,pos=0.25]{$1$}(B);
\end{tikzpicture}
}
&
\scalebox{1.2}{
\begin{tikzpicture}[scale=0.8]
\node (A) at (0,0) {$a$};
\node (B) at (1,1) {$a$};
\node (C) at (2,0) {$0$};
\path[->,font=\scriptsize,>=angle 90]
(A) edge node[above,pos=0.3]{$1$} (B)
(C)edge node[above,pos=0.3]{$!$}(B);
\end{tikzpicture}
}
&
\scalebox{1.2}{
\begin{tikzpicture}[scale=0.8]
\node (A) at (0,0) {$a$};
\node (B) at (1,1) {$a$};
\node (C) at (2,0) {$a+a$};
\path[->,font=\scriptsize,>=angle 90]
(A) edge node[above,pos=0.3]{$1$} (B)
(C)edge node[above,pos=0.3]{$\nabla$}(B);
\end{tikzpicture}
}
}
\]
Here $a$ is any object of $\lD$, $!$ is the unique morphism from the initial object, and $\nabla$ is the 
codiagonal.   If we draw these four cospans as string diagrams:
\[
 \xymatrixrowsep{-15pt}
  \xymatrixcolsep{5em}
  \xymatrix{
    \unit{.075\textwidth}  & \mult{.075\textwidth} &  \counit{.075\textwidth}  & \comult{.075\textwidth} 
  }
\]
we see they echo the four basic wiring patterns listed at the start of Section \ref{sec:variable_sharing}.
This is no coincidence.   Fong and Spivak \cite{FS} discovered that these wiring patterns, and the laws governing them, arise whenever one considers cospans in a category with finite colimits.   To study such cospans in their
purest form, they introduced cospans in the free category with finite colimits on an arbitrary set.   However, they only studied the \emph{category} of such cospans.   Here we study 
the double category of such cospans.

The free category with finite colimits on one object is $\Fin\Set$.  More generally, for any finite set $X$, the free category with finite colimits on $X$ is $\Fin\Set^X$.  But this fails when $X$ is infinite.  Instead, we need to use
the category of finite sets equipped with a map to $X$, which we denote as $\Fin\Set \downarrow X$, even though $X$ itself need not be finite.   The morphisms in $\Fin\Set \downarrow X$ are as usual in a slice category: that is, a morphism from $A \xrightarrow{p} X$ to $B \xrightarrow{q} X$ is a commutative triangle
\[
\scalebox{1.2}{
\begin{tikzpicture}
\node (A) at (1,1) {$A$};
\node (B) at (2,0) {$X$};
\node (C) at (3,1) {$B$};
\path[->,font=\scriptsize,>=angle 90]
(A) edge node[above,pos = 0.5]{$f$} (C)
(A) edge node[left,pos = 0.5]{$p$} (B)
(C) edge node[right, pos = 0.5]{$q$} (B);
\end{tikzpicture}
}
\]
 
 \begin{lem}
 \label{lem:free_category_with_finite_colimits}
The free category with finite colimits on a set $X$ is $\Fin\Set \downarrow X$.   More precisely, suppose $\A$ is a category with finite colimits.  Then any function sending elements of $X$ to objects of $\A$ extends to a finite-colimit-preserving functor from $\Fin\Set \downarrow X$ to $\A$.  Moreover, any two such extensions are naturally isomorphic in a unique way. 
\end{lem}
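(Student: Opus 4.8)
The plan is to take $\Fin\Set \downarrow X$ together with the function $j \maps X \to \Ob(\Fin\Set \downarrow X)$ sending $x$ to the one-point set $\{\ast\}\xrightarrow{x}X$, and verify the universal property directly. First note that $\Fin\Set \downarrow X$ does have finite colimits: the projection to $\Fin\Set$ creates them, since a finite disjoint union, a coequalizer, or the empty set, formed over $X$, is again a finite set over $X$. Now, given $\phi_0\maps X\to\Ob\A$, I would define $\hat\phi\maps\Fin\Set \downarrow X\to\A$ on objects by $\hat\phi(S\xrightarrow{f}X)=\coprod_{s\in S}\phi_0(f(s))$, a legitimate finite coproduct in $\A$ since $S$ is finite, and on a morphism $h\maps(S\xrightarrow{f}X)\to(T\xrightarrow{g}X)$ (a function with $gh=f$) by assembling, over $s\in S$, the coprojections $\phi_0(f(s))=\phi_0(g(h(s)))\hookrightarrow\coprod_{t\in T}\phi_0(g(t))$ into the $h(s)$-th summand. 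Functoriality is a routine check. Equivalently, $\hat\phi$ is the left Kan extension of $\phi_0$ along $j$, and the coproduct formula is its pointwise computation, the comma category $j\downarrow(S\xrightarrow{f}X)$ being discrete on the set $S$. Since $\hat\phi\circ j=\phi_0$ on the nose, $\hat\phi$ extends $\phi_0$.

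Next I would show that $\hat\phi$ preserves finite colimits; by the standard reduction it suffices to check the initial object, binary coproducts, and coequalizers, since every finite colimit is a coequalizer of a pair of maps between finite coproducts. The first two are immediate from the formula, using that $\emptyset\to X$ is initial and $(S\sqcup S'\to X)$ is the binary coproduct in $\Fin\Set \downarrow X$. For coequalizers, let $h,k\maps(S\xrightarrow{f}X)\rightrightarrows(T\xrightarrow{g}X)$; the coequalizer in $\Fin\Set \downarrow X$ is $(Q\xrightarrow{\bar g}X)$ with $q\maps T\to Q$ the coequalizer in $\Fin\Set$ and $\bar g q=g$ (this factorization exists because $gh=f=gk$). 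One then computes the coequalizer of $\hat\phi(h),\hat\phi(k)$ in $\A$ by hand: both maps are built from summand coprojections, so the coequalizer amounts to gluing, for each $s$, the $h(s)$-th and $k(s)$-th summands; summands indexed by $T$-elements lying over a common point $p\in Q$ are literally the same object $\phi_0(\bar g(p))$ of $\A$ (since $g=\bar g q$), and gluing equal objects along their identity coprojections merely collapses them via the fold map, so the coequalizer is $\coprod_{p\in Q}\phi_0(\bar g(p))=\hat\phi(Q\xrightarrow{\bar g}X)$ with connecting map $\hat\phi(q)$. Hence $\hat\phi$ preserves coequalizers.

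For the uniqueness clause, the key observation is that every object of $\Fin\Set \downarrow X$ is \emph{canonically} a finite coproduct of objects in the image of $j$: one has $(S\xrightarrow{f}X)=\coprod_{s\in S}j(f(s))$, with the $s$-th coprojection the evident point $\{\ast\}\xrightarrow{f(s)}X$, and every morphism $h$ as above is, under these decompositions, the coproduct over $s$ of the coprojections $j(f(s))\hookrightarrow\coprod_t j(g(t))$ into the $h(s)$-th summand. So if $F,G\maps\Fin\Set \downarrow X\to\A$ are finite-colimit-preserving and both restrict to $\phi_0$ along $j$, then, as each preserves finite coproducts, applying them to this decomposition yields a natural isomorphism $F(S\to X)\xrightarrow{\sim}\coprod_s F(j(f(s)))=\coprod_s G(j(f(s)))\xrightarrow{\sim}G(S\to X)$, with naturality forced by the description of morphisms as coproducts of coprojections; and any natural isomorphism $F\Rightarrow G$ restricting to the identity on $j(X)$ must agree with this one on every coprojection $j(f(s))\to(S\to X)$, hence equal it, by the universal property of the coproduct. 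Taking $G=\hat\phi$ gives both halves of the ``unique up to a unique isomorphism'' statement (with ``unique'' understood, as usual, among isomorphisms compatible with the identification along $j$).

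The one step needing genuine care is the preservation of coequalizers: the bookkeeping of which summand coprojections get glued, and the observation that parallel summands over a common point of $Q$ are \emph{equal} objects, so that gluing them along identity maps simply collapses a fold map. Everything else --- functoriality of $\hat\phi$, the initial-object and binary-coproduct cases, and the entire uniqueness argument --- is formal, driven by the single identity $(S\xrightarrow{f}X)=\coprod_{s\in S}j(f(s))$.
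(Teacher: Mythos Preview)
Your proof is correct and takes a genuinely different route from the paper.  The paper simply invokes Kelly's general result \cite[Thm.\ 5.35]{Kelly} that the free category with colimits of a given class on a small category $\C$ is the full subcategory of $\widehat{\C}$ on the relevant colimits of representables, and then observes that for $\C = \Disc(X)$ and the class of finite colimits this subcategory is equivalent to $\Fin\Set \downarrow X$.  You instead verify the universal property by hand: you write down the extension $\hat\phi$ explicitly as a pointwise left Kan extension, check preservation of finite colimits by reducing to initial objects, binary coproducts, and coequalizers, and establish uniqueness from the decomposition $(S\xrightarrow{f}X)\cong\coprod_{s\in S} j(f(s))$.  Your approach is self-contained and makes the extension concrete, at the cost of the coequalizer bookkeeping you flag; the paper's approach is much shorter and places the result in its natural generality, but relies on external machinery.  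Both are fine; the coequalizer verification in your argument is the only nonformal step, and your sketch of it (all summands over a common $p\in Q$ are literally the same object, so the identifications collapse to a fold map) is correct.
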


\begin{proof}
We use a general result \cite[Thm.\ 5.35]{Kelly} about the free category with colimits of a given sort on a small category $\C$, and specialize this to the case where $\C = \Disc(X)$, the discrete category on $X$.   This general result finds the desired free category inside the presheaf category $\widehat{\C} =  \Set^{\C^\op}$.   In particular, it says that the free category with finite colimits on $\C$ is the full subcategory $\widetilde{\C}$ of $\widehat{\C}$ whose objects are finite colimits of representables.   In the case $\C = \Disc(X)$ there is one representable for each element of $X$, and the finite colimits of these are simply finite coproducts.  Thus, an object of $\widetilde{\Disc}(X)$ is a presheaf $F$ on $\Disc(X)$ that assigns to each $x \in X$ a finite set $F(x)$ such that the disjoint union
$\sum_{x \in X} F(x) $ is finite.   The map $p_F \maps \sum_{x \in X} F(x) \to X$ sending all elements of $F(x)$ to $x \in  X$ can be seen as an object of $\Fin\Set \downarrow X$.   By reversing this construction, any object of $\Fin\Set \downarrow X$ gives an object of $\widetilde{\Disc}(X)$. Since morphisms between finite colimits of representables correspond precisely to morphisms in $\Fin\Set \downarrow X$, $\widetilde{\Disc}(X)$ is equivalent to $\Fin\Set \downarrow X$.
\end{proof}

Now, suppose $\A$ is a small category with finite colimits.   Let 
\[          \tilde{\A} = \Fin\Set \downarrow \Ob(\A)  \]
where $\Ob(\A)$ is the set of objects of $\A$.   By Lemma \ref{lem:free_category_with_finite_colimits}, $\tilde{\A}$ is the free category with colimits on the objects of $\A$.  There is a natural inclusion $\Ob(\A) \hookrightarrow \Ob(\tilde{\A})$, and thus an essentially unique  functor 
\[      \pi \maps \tilde{\A} \to \A \]
sending objects of $\A$ to themselves and preserving finite colimits.  
As one would expect, $\pi$ induces a symmetric monoidal double functor 
\[      j \maps \lCsp(\tilde{A}) \to \lCsp(\A) . \]
Indeed, this is a special case of Theorem \ref{thm:structured_cospan_functoriality_2}.  

We can summarize the story so far as follows.  Let $\lD$ be a small symmetric monoidal structured/decorated cospan category.  Then there are symmetric monoidal double functors 
\[            \lCsp(\tilde{\lD}_0) \xrightarrow{j} \lCsp(\lD) \xrightarrow{\iota} \lD .\]
We call $\lCsp(\lD)$ the `exoskeleton' of $\lD$, and $\lCsp(\tilde{\lD}_0)$ the `outer shell'.    Loose morphisms in the exoskeleton are cospans in the tight subcategory of $\lD$, while loose morphisms in the outer shell are cospans that can be defined using only the objects of $\lD$ and general features of categories with finite colimits.
In a sense, we obtain the outer shell by stripping the exoskeleton of most of its personality, which comes from its morphisms.

We expect that this story can be carried through for a larger class of double categories, with structured and decorated cospans being just examples.  Libkind and Myers' work on variable sharing theories suggests a way to generalize.  We turn to this next.

\subsection{Hypergraph double categories}
\label{subsec:hypergraph}

We have just seen that any symmetric monoidal structured/decorated cospan double category comes equipped with maps from two simpler double categories: its exoskeleton and its outer shell.   In Section \ref{subsec:frobenius} we dig into the rich structure that still remains in the outer shell.  Libkind and Myers take a complementary approach, developing a general framework to unify structured and decorated cospans, and also study open systems in the input-output paradigm \cite{LM,MyersBook}.    Here we introduce their line of thinking with the example of structured cospans.

In Section \ref{sec:introduction} we asked why an open system should have just two interfaces, and answered that this was just a matter of convenience.  Libkind and Myers go further and consider open systems with just \emph{one} interface.   Suppose $\lD$ is a symmetric monoidal structured cospan category.   It turns out that we can recover all of $\lD$ from cospans with trivial left interface:
\[
\scalebox{1.2}{
\begin{tikzpicture}[scale=1.0]
\node (A) at (0,0) {$0$};
\node (B) at (1,1) {$x$};
\node (C) at (2,0) {$L(b)$};
\path[->,font=\scriptsize,>=angle 90]
(A) edge node[above,pos=0.3]{$!$} (B)
(C)edge node[above,pos=0.3]{$o$}(B);
\end{tikzpicture}
}
\]
together with certain operations we can do on these: namely, tensoring them and composing them on the right with cospans coming from the outer shell.

First, we can take an arbitrary structured cospan
\[
\scalebox{1.2}{
\begin{tikzpicture}[scale=1.0]
\node (A) at (0,0) {$L(a)$};
\node (B) at (1,1) {$x$};
\node (C) at (2,0) {$L(b)$};
\path[->,font=\scriptsize,>=angle 90]
(A) edge node[above,pos=0.3]{$i$} (B)
(C)edge node[above,pos=0.3]{$o$}(B);
\end{tikzpicture}
}
\]
and `fold' it to get a structured cospan with trivial left interface:
\[
\scalebox{1.2}{
\begin{tikzpicture}[scale=1.0]
\node (A) at (0,0) {$0$};
\node (B) at (1,1) {$x$};
\node (C) at (2,0) {$L(a+b).$};
\path[->,font=\scriptsize,>=angle 90]
(A) edge node[above,pos=0.3]{$!$} (B)
(C)edge node[right,pos=0.7]{$\;\langle i, o\rangle$}(B);
\end{tikzpicture}
}
\]
Here $\langle i, o \rangle$ is the copairing of $i$ and $o$ composed with the isomorphism $L(a+b) \cong L(a) + L(b)$.    

Next, suppose we want to compose two structured cospans:
\begin{equation}
\label{eq:cospan_1}
\scalebox{1.2}{
\begin{tikzpicture}[baseline=(current  bounding  box.center)]
\node (A) at (0,0) {$L(a)$};
\node (B) at (1,1) {$x$};
\node (C) at (2,0) {$L(b)$};
\node (A') at (3,0) {$L(b)$};
\node (B') at (4,1) {$y$};
\node (C') at (5,0) {$L(c)$};
\path[->,font=\scriptsize,>=angle 90]
(A) edge node[above,pos=0.3]{$i$} (B)
(C)edge node[above,pos=0.3]{$o$}(B)
(A') edge node[above,pos=0.3]{$i'$} (B')
(C')edge node[above,pos=0.3]{$o'$}(B');
\end{tikzpicture}
}
\end{equation}
using only their folded versions:
\[
\scalebox{1.2}{
\begin{tikzpicture}[scale=1.0]
\node (A) at (0,0) {$0$};
\node (B) at (1,1) {$x$};
\node (C) at (2,0) {$L(a+b)$};
\node (A') at (3,0) {$0$};
\node (B') at (4,1) {$y$};
\node (C') at (5,0) {$L(b+c)$.};
\path[->,font=\scriptsize,>=angle 90]
(A) edge node[above,pos=0.3]{$!$} (B)
(C)edge node[right,pos=0.7]{\, $\langle i, o \rangle$}(B)
(A') edge node[above,pos=0.3]{$!$} (B')
(C')edge node[right,pos=0.7]{\, $\langle i', o' \rangle$}(B');
\end{tikzpicture}
}
\]
To do this we first tensor their folded versions, getting this:
\begin{equation}
\label{eq:cospan_2}
\scalebox{1.2}{
\begin{tikzpicture}[baseline=(current  bounding  box.center)]
\node (A) at (0,0) {$0$};
\node (B) at (1,1) {$x+y$};
\node (C) at (2,0) {$L(a+b+b+c).$};
\path[->,font=\scriptsize,>=angle 90]
(A) edge node[above,pos=0.3]{$!$} (B)
(C)edge node[right,pos=0.7]{\;\, $\langle i, o\rangle + \langle i', o' \rangle$}(B);
\end{tikzpicture}
}
\end{equation}
We then compose this with the following cospan:
\begin{equation}
\label{eq:cospan_3}
\scalebox{1.2}{
\begin{tikzpicture}[baseline=(current  bounding  box.center)]
\node (A) at (0,0) {$L(a+b+b+c)$};
\node (B) at (1,1) {$L(a+b+c)$};
\node (C) at (2,0) {$L(a+c)$};
\path[->,font=\scriptsize,>=angle 90]
(A) edge node[above,pos=0.3]{$$} (B)
(C)edge node[right,pos=0.7]{\;\, $$}(B);
\end{tikzpicture}
}
\end{equation}
whose left leg is built using the codiagonal $\nabla \maps b + b \to b$, and whose right leg is built using the unique map $! \maps 0 \to b$.    

One can show that the result of composing cospans \eqref{eq:cospan_2} and \eqref{eq:cospan_3} is
\[
\scalebox{1.2}{
\begin{tikzpicture}[scale=1.0]
\node (A) at (0,0) {$0$};
\node (B) at (1,1) {$x+_{L(b)} y$};
\node (C) at (2,0) {$L(a+c).$};
\path[->,font=\scriptsize,>=angle 90]
(A) edge node[above,pos=0.3]{$!$} (B)
(C) edge node[right,pos=0.7]{\;\, $\langle i'', o''\rangle$}(B);
\end{tikzpicture}
}
\]
This is the folded version of the cospan
\[
\scalebox{1.2}{
\begin{tikzpicture}[scale=1.0]
\node (A) at (0,0) {$L(a)$};
\node (B) at (1,1) {$x+_{L(b)} y$};
\node (C) at (2,0) {$L(c)$};
\path[->,font=\scriptsize,>=angle 90]
(A) edge node[above,pos=0.3]{$i''$} (B)
(C) edge node[right,pos=0.7]{$o''$}(B);
\end{tikzpicture}
}
\]
that is the composite of the cospans in Equation (\ref{eq:cospan_1}).   To prove this, we do a computation showing that $i''$ and $o''$ can also be defined using the pushout diagram that defines the desired composite:
\[
\scalebox{1.2}{
\begin{tikzpicture}[scale=1.0]
\node (A) at (0,0) {$L(a)$};
\node (B) at (1,1) {$x$};
\node (C) at (2,0) {$L(b)$};
\node (D) at (3,1) {$y$};
\node (E) at (4,0) {$L(c)$};
\node (F) at (2,2) {$x+_{L(b)}y$};
\path[->,font=\scriptsize,>=angle 90]
(A) edge node[above,pos=0.3]{$i$}(B)
(C) edge node[above,pos=0.3]{$o$}(B)
(C) edge node[above,pos=0.3]{$i'$}(D)
(E) edge node[above,pos=0.3]{$o'$}(D)
(B) edge node[above,pos=0.3]{$$}(F)
(D) edge node[above,pos=0.3]{$$}(F)
(A) edge [out=90, in=180, looseness=0.7] node[above,pos=0.5]{$i''$} (F)
(E) edge [in=0, out=90, looseness=1.0] node[right,pos=0.5]{$o''$}(F);
\end{tikzpicture}
}
\]

In short, composing structured cospans in folded form requires only tensoring them and then composing the result with the cospan in Equation \eqref{eq:cospan_3}.  But this cospan comes from the outer shell.  To see this, note that if we take the following cospan in the outer shell $\tilde{\lD}_0$:
\[
\scalebox{1.2}{
\begin{tikzpicture}[baseline=(current  bounding  box.center)]
\node (A) at (0,0) {$a+b+b+c$};
\node (B) at (1,1) {$a+b+c$};
\node (C) at (2,0) {$a+c$};
\path[->,font=\scriptsize,>=angle 90]
(A) edge node[left,pos=0.5]{$1 + \nabla + 1$\;\,} (B)
(C) edge node[right,pos=0.5]{$1 + ! + 1 $}(B);
\end{tikzpicture}
}
\]
and map it into $\lD$ using the double functor discussed in Section \ref{subsec:outer_shell}:
\[            \lCsp(\tilde{\lD}_0) \xrightarrow{j} \lCsp(\lD) \xrightarrow{\iota} \lD ,\]
we get the cospan in \eqref{eq:cospan_3}.   The double functor $j$ converts formal colimits to colimits in the tight category of $\lD$; then $\iota$ applies $L$ to everything, as in Lemma \ref{lem:structured_cospan_interfaces}.  

To generalize this sort of observation, Libkind and Myers define a `symmetric monoidal loose right module' of a symmetric monoidal double category \cite[Sec.\ 4]{LM}.   Structured cospans with trivial left interface, and maps between these, should form a symmetric monoidal loose right module of the outer shell of $\lD$.   The argument above suggests that the whole symmetric monoidal double category $\lD$ can be recovered from this module.  The same is probably also true for decorated cospans.  However, I have not seen proofs of these claims.

\begin{problem}
Read Libkind and Myers \cite{LM} and show that if $\lD$ is a symmetric monoidal stuctured/decorated cospan double category, then loose morphisms with trivial left interface, and maps between these, give a symmetric monoidal loose right module of the outer shell $\lCsp(\tilde{\lD}_0)$.   Then show that $\lD$ can be recovered from this symmetric monoidal loose right module.
\end{problem}

If the first part of this challenge can be met, every symmetric monoidal structured or decorated cospan double category will be a `hypergraph double category' according to the following definition, which categorifies Fong and Spivak's definition of hypergraph category \cite{FS}.   

\begin{tentativedefn}
A \define{hypergraph double category} is a symmetric monoidal loose right module of any symmetric monoidal double category of the form $\lCsp(\X)$, where $\X$ is the free category with finite colimits on some set.
\end{tentativedefn}

In \cite[Ex.\ 8.11]{LM}, Libkind and Myers proposed a more general definition of hypergraph double category, allowing $\X$ to be the free category with colimits on any small category.    

\subsection{Frobenius structures}
\label{subsec:frobenius}

Since any structured or decorated cospan double category comes equipped with a map from a cospan double category, some features of ordinary cospans are automatically inherited by structured and decorated cospans.    
For example, when $\lD$ is a symmetric monoidal structured/decorated cospan double category, every object $a \in \lD$ is equipped with loose morphisms
\[
  \xymatrixrowsep{1pt}
  \xymatrixcolsep{30pt}
  \xymatrix{
    \mult{.075\textwidth} & \unit{.075\textwidth} &
    \comult{.075\textwidth} & \; \counit{.075\textwidth} 
    \\
    \mu\colon a+a \slashedrightarrow a & \eta\colon 0 \slashedrightarrow a &
    \delta\colon a \slashedrightarrow a+a & \epsilon\colon a \slashedrightarrow 0
  }
\]
that obey the laws of a special commutative Frobenius monoid up to isomorphism.  Thus, the object comes with chosen invertible 2-cells
\[
  \xymatrixrowsep{1pt}
  \xymatrixcolsep{25pt}
  \xymatrix{
    \assocr{.1\textwidth} \; \raisebox{0.2 em}{$\cong$} \;  \assocl{.1\textwidth} & 
    \unitr{.1\textwidth} \; \raisebox{0.2 em}{$\cong$} \; \idone{.1\textwidth} \; \raisebox{0.2 em}{$\cong$} \;  \unitl{.1\textwidth} 
  }
\]
\[
  \xymatrixrowsep{1pt}
  \xymatrixcolsep{25pt}
  \xymatrix{
   \commute{.1\textwidth} \; \raisebox{0.2 em}{$\cong$} \;   \mult{.07\textwidth} &
    \frobs{.1\textwidth} \; \raisebox{0.2 em}{$\cong$} \;  \frobx{.1\textwidth} 
    }
\]
and their left-right reflected versions, and these 2-cells obey a bevy of coherence laws.   All these coherence laws arise from an analysis of cospan double categories. 

Before delving into this, let us begin with some generalities.   First, any double category $\lD$ has a \define{loose bicategory}, denoted $\bD$, in which:
\begin{itemize}
\item objects are objects of $\lD$,
\item morphisms are loose morphisms of $\lD$,
\item 2-morphisms are 2-cells of $\lD$ for which the
source and target tight morphisms are identities,
\item composition of morphisms is given by composition of loose morphisms in $\lD$,
\item vertical and horizontal composition of 2-morphisms are given by tight and loose
composition of 2-cells in $\lD$, respectively.
\end{itemize}
Second, any bicategory $\bD$ has a \define{decategorification}, a category $\D$ in which:
\begin{itemize}
\item objects are objects of $\bD$,
\item morphisms are isomorphism classes of morphisms of $\bD$.
\end{itemize}
Hansen and Shulman have shown that if $\lD$ is a \emph{fibrant} symmetric monoidal double category, $\bD$ naturally becomes a symmetric monoidal bicategory \cite{HS,Shulman2010}.  It then follows easily that $\D$ becomes a symmetric monoidal category.   

Now let $\A$ be a category with finite colimits.  Merely by virtue of $\A$ having finite coproducts, every object $a \in \A$ becomes a commutative monoid internal to $(\A, +)$.   The unit for this monoid is the unique morphism $! \maps 0 \to a$ in $\A$, while the multiplication is the codiagonal $\nabla \maps a \to a + a$.    These morphisms give cospans in $\A$
\[
\scalebox{1.2}{
\begin{tikzpicture}[scale=0.8]
\node (A) at (0,0) {$0$};
\node (B) at (1,1) {$a$};
\node (C) at (2,0) {$a$};
\path[->,font=\scriptsize,>=angle 90]
(A) edge node[above,pos=0.3]{$!$} (B)
(C)edge node[above,pos=0.25]{$1$}(B);
\end{tikzpicture}
\qquad
\begin{tikzpicture}[scale=0.8]
\node (A) at (0,0) {$a + a$};
\node (B) at (1,1) {$a$};
\node (C) at (2,0) {$a$};
\path[->,font=\scriptsize,>=angle 90]
(A) edge node[above,pos=0.3]{$\nabla$} (B)
(C)edge node[above,pos=0.25]{$1$}(B);
\end{tikzpicture}
}
\]
that we call the \define{unit} $\eta \maps 0 \slashedrightarrow a$ and  \define{multiplication} $\mu \maps a+a \slashedrightarrow a$, respectively.    In terms of string diagrams, these look like `adding an extra wire that doesn't connect with anything' and `joining wires', respectively:
\[
  \xymatrixrowsep{1pt}
  \xymatrixcolsep{30pt}
  \xymatrix{
    \unit{.075\textwidth}  & \mult{.075\textwidth} 
    \\
    \eta\colon 0 \slashedrightarrow a  &  \mu\colon a+a \slashedrightarrow a
  }
\]
 Since $!$ and $\nabla$ obey the commutative monoid laws, but pushouts are defined only up to isomorphism, we expect that $\mu$ and $\eta$ obey the commutative monoid laws only up to coherent isomorphism.    They should thus make $a$ into a `symmetric pseudomonoid', a concept that can be defined in any symmetric monoidal bicategory \cite[Def.\ 17]{DS}. 
 
Because $\A$ has finite colimits, we can form the symmetric monoidal double category $\lCsp(\A)$ as in Lemma \ref{lem:cospans}.   Since this is fibrant, Hansen and Shulman's work implies that the loose bicategory $\mathbf{Csp}(\A)$ is symmetric monoidal.   The loose morphisms $\mu$ and $\eta$ live in this symmetric monoidal bicategory.   And indeed, they make $a$ into a symmetric pseudomonoid in $\mathbf{Csp}(\A)$.  In more detail:

\begin{thm}  Let $\A$ be a category with finite colimits, and let $a \in \A$.   The cospans $\eta\colon 0 \slashedrightarrow a$ and $\mu\colon a+a \slashedrightarrow a$ defined as above make $a$ into symmetric pseudomonoid in $\mathbf{Csp}(\A)$.   In more detail:
\begin{itemize}
\item The multiplication $\mu$ obeys the associative law up to an isomorphism of cospans, the \define{associator} $\alpha \maps \mu(\mu + 1_a) \simRightarrow \mu(1_a + \mu)$.
\[
  \xymatrix{
    \assocr{.1\textwidth}\; \xRightarrow{\alpha} \; \assocl{.1\textwidth}
    }
\]
This associator can be chosen to obey the pentagon identity.
\item The unit $\eta$ obeys the left and right unit laws up to isomorphisms called the \define{left unitor}  $\lambda \maps \mu(\eta + 1_a) \simRightarrow 1_a$ and \define{right unitor} $\rho \maps \mu(1_a + \eta) \simRightarrow 1_a$.  
\[
  \xymatrixrowsep{1pt}
  \xymatrixcolsep{25pt}
  \xymatrix{
    \unitr{.1\textwidth}  \;  \raisebox{0.2 em}{$\xRightarrow{\lambda}$} \;  \idone{.1\textwidth} 
    &  \unitl{.1\textwidth}  \;  \raisebox{0.2 em}{$\xRightarrow{\rho}$} \;   \idone{.1\textwidth} 
  }
\]
Together with the associator, these can be chosen to obey the triangle identity.
\item The multiplication $\mu$ obeys the commutative law up to an isomorphism called the \define{symmetry} $\sigma \maps \mu s \simRightarrow \mu$, where $s \maps a + a \slashedrightarrow a + a$ is the symmetry in $\mathbf{Csp}(\A)$.  
\[
  \xymatrix{
   \commute{.1\textwidth} \;  \raisebox{0.2 em}{$\xRightarrow{\sigma}$} \;  \mult{.07\textwidth}
    }
\]
Together with the associator, this can be chosen to obey the hexagon identities.
\end{itemize}
\end{thm}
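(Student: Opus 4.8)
\emph{Strategy.} The plan is to produce the symmetric pseudomonoid structure on $a$ not by bare hand, but by transporting the (strict) commutative‑monoid structure that $a$ already carries in the $1$‑category $(\A,+)$ along the canonical covariant embedding of $\A$ into the loose bicategory $\mathbf{Csp}(\A)$. Concretely, the two cospans in the statement are companions: $\eta=(!)_\ast$ is the companion of $! \maps 0 \to a$, and $\mu=\nabla_\ast$ is the companion of the codiagonal $\nabla \maps a+a \to a$ (in $\lCsp(\A)$ the companion of a tight morphism $f\maps x\to y$ is precisely the cospan $x\xrightarrow{f}y\xleftarrow{1_y}y$). Likewise the symmetry $s\maps a+a\slashedrightarrow a+a$ of $\mathbf{Csp}(\A)$ is the companion $(\sigma_{a,a})_\ast$ of the swap map. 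So the whole theorem will be an instance of ``a strong symmetric monoidal pseudofunctor sends symmetric pseudomonoids to symmetric pseudomonoids''.

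\emph{Step 1: the ambient structure and the embedding.} Since $\A$ has finite colimits, Lemma \ref{lem:cospans} makes $\lCsp(\A)$ a fibrant symmetric monoidal double category, and by Hansen and Shulman's work \cite{HS,Shulman2010} the loose bicategory $\mathbf{Csp}(\A)$ is then a symmetric monoidal bicategory, in which symmetric pseudomonoids make sense in the sense of \cite[Def.\ 17]{DS}. I would then assemble the companion assignment $(-)_\ast$ into a pseudofunctor from $(\A,+)$, regarded as a locally discrete symmetric monoidal bicategory, to $\mathbf{Csp}(\A)$: the theory of companions in a fibrant double category supplies invertible comparison $2$‑cells $g_\ast f_\ast \simRightarrow (gf)_\ast$ and $1_x\simRightarrow (1_x)_\ast$ obeying the pseudofunctor coherence axioms, and — using the explicit pushout formula for cospan composition and the explicit coproduct of cospans from Theorem \ref{thm:structured_cospan_2} — one checks $f_\ast+g_\ast=(f+g)_\ast$ (on the nose, for the standard choice of coproduct of cospans) compatibly with the associativity and symmetry constraints. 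Thus $(-)_\ast$ is a \emph{strong} symmetric monoidal pseudofunctor.

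\emph{Step 2: the algebra being transported.} In any category with finite coproducts, $\nabla$ and $!$ make $a$ a commutative monoid object of $(\A,+)$: explicitly $\nabla(\nabla+1_a)=\nabla(1_a+\nabla)\,\alpha_{a,a,a}$, $\nabla(!+1_a)=\lambda_a$, $\nabla(1_a+!)=\rho_a$, and $\nabla\,\sigma_{a,a}=\nabla$, where $\alpha,\lambda,\rho,\sigma$ are the coproduct coherence isomorphisms of $\A$. Equivalently $(a,\nabla,!)$ is a symmetric pseudomonoid in the locally discrete bicategory on $(\A,+)$, all of whose structure $2$‑cells are identities and whose pentagon, triangle and hexagon collapse to exactly these four equations. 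Applying $(-)_\ast$, one defines the associator, unitors and symmetry for $a$ in $\mathbf{Csp}(\A)$ by pasting the companion comparison cells (and the monoidal constraints of $(-)_\ast$ and of $\mathbf{Csp}(\A)$) against the images under $(-)_\ast$ of these four equations; since the latter are identities, the resulting $2$‑cells are built entirely from invertible data and are themselves invertible, and their pentagon/triangle/hexagon follow from the trivial ones upstairs together with the pseudofunctor and monoidal‑bicategory coherence axioms. These are exactly the canonical choices the theorem asserts to exist. (If one prefers to bypass the transport principle, the same $2$‑cells can instead be constructed directly from the universal properties of the pushouts computing cospan composition, and the three coherence identities verified by hand; this is elementary but tedious.)

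\emph{Main obstacle.} The delicate part is the bookkeeping in Step 1--2: one must genuinely pin down that $(-)_\ast$ is strong (not merely lax) symmetric monoidal with explicit constraint cells, and then run the pasting‑diagram argument that a strong symmetric monoidal pseudofunctor preserves symmetric pseudomonoids — a folklore result whose proof is a sizeable diagram chase in a symmetric monoidal bicategory, invoking the associator, pentagonator, and the braiding/syllepsis data of $\mathbf{Csp}(\A)$. By contrast, identifying $\mu,\eta,s$ as companions of $\nabla,!,\sigma_{a,a}$ and recognizing $(a,\nabla,!)$ as a commutative monoid are immediate, and the Frobenius/copseudomonoid companions $\delta,\epsilon$ (treated in the rest of the section) are handled by the evident dual of the same argument.
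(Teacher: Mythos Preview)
Your argument is correct. The paper's own proof sketch, however, takes the elementary direct route that you mention only parenthetically: it constructs the associator, unitors and symmetry explicitly from the universal property of the pushouts that compute cospan composition, and then uses the \emph{uniqueness} clause of that universal property to force the pentagon, triangle and hexagon identities. Your primary approach---transport the strict commutative monoid $(a,\nabla,!)$ along the companion pseudofunctor $(-)_\ast \maps \A \to \mathbf{Csp}(\A)$, which is strong symmetric monoidal, and invoke that such pseudofunctors preserve symmetric pseudomonoids---is exactly what the paper presents \emph{after} the proof as ``fitting this into a larger context'', even citing the same Day--Street result for the preservation step. So the two routes are complementary: the paper's direct argument is self-contained and avoids any appeal to companion/pseudofunctor machinery, while your transport argument packages all the coherence bookkeeping into one reusable principle and makes immediately clear why the dual comonoid and Frobenius structures will follow by the same mechanism.
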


\begin{proofsketch}
The associator, unitors and symmetry can all be defined using the universal property of pushouts, and the uniqueness clause in this universal property implies that the pentagon, triangle and hexagon diagrams commute.
\end{proofsketch}

To fit this into a larger context, note that what we have done here is to turn two tight morphisms $!$ and $\nabla$ in $\lCsp(\A)$ into their companions, $\eta$ and $\mu$.   The process of turning tight morphisms into their companions can be shown to define a symmetric monoidal pseudofunctor 
\[                   \text{comp} \maps \A \to \textbf{Csp}(\A) .  \]
Thus it sends commutative monoids in $\A$ to symmetric pseudomonoids in $\textbf{Csp}(\A)$ \cite[Prop.\ 16]{DS}.   

Similarly, if we take the conjoints of $! \maps 0 \to a$ and $\nabla \maps a \to a+a$ we obtain cospans
\[
\scalebox{1.2}{
\begin{tikzpicture}[scale=0.8]
\node (A) at (0,0) {$a$};
\node (B) at (1,1) {$a$};
\node (C) at (2,0) {$a+a$};
\path[->,font=\scriptsize,>=angle 90]
(A) edge node[above,pos=0.3]{$1$} (B)
(C)edge node[above,pos=0.3]{$\nabla$}(B);
\end{tikzpicture}
\qquad
\begin{tikzpicture}[scale=0.8]
\node (A) at (0,0) {$a$};
\node (B) at (1,1) {$a$};
\node (C) at (2,0) {$0$};
\path[->,font=\scriptsize,>=angle 90]
(A) edge node[above,pos=0.3]{$1$} (B)
(C)edge node[above,pos=0.3]{$!$}(B);
\end{tikzpicture}
}
\]
which we call the \define{comultiplication} $\delta \maps a \slashedrightarrow a+a$ and \define{counit} $\epsilon \maps a \slashedrightarrow 0$.   In terms of string diagrams, these look like `splitting a wire' and `capping off a wire':
\[
  \xymatrixrowsep{1pt}
  \xymatrixcolsep{30pt}
  \xymatrix{
    \mult{.075\textwidth} & \unit{.075\textwidth} 
    \\
    \mu\colon a+a \slashedrightarrow a & \eta\colon 0 \slashedrightarrow a 
  }
\]
Since taking conjoints is a symmetric monoidal pseudofunctor
\[                   \text{conj} \maps \A^{\text{op}}  \to \textbf{Csp}(\A) \]
it sends $a$, which is a commutative comonoid in  $(\A^{\text{op}}, +)$, to a symmetric pseudocomonoid in $\textbf{Csp}(\A)$, with $\delta$ as comultiplication and $\epsilon$ as counit.

How, and why, do the pseudomonoid and pseudocomonoid structures on any object $a \in \mathbf{Csp}(\A)$ interact?   They do so because the multiplication $\mu \maps a + a \slashedrightarrow a$ followed by the counit $ \epsilon \maps a \slashedrightarrow 0$ gives an important cospan called the \define{cup}:
\[
\scalebox{1.2}{
\begin{tikzpicture}[scale=0.8]
\node (A) at (0,0) {$a+a$};
\node (B) at (1,1) {$a$};
\node (C) at (2,0) {$0$};
\path[->,font=\scriptsize,>=angle 90]
(A) edge node[above,pos=0.3]{$\nabla$} (B)
(C)edge node[above,pos=0.3]{$!$}(B);
\end{tikzpicture}
}
\]
which we can draw as this string diagram:
\[
  \xymatrixrowsep{1pt}
  \xymatrixcolsep{30pt}
  \xymatrix{
 \cuptwo{.075\textwidth} 
    \\
    \cup \colon a+a \slashedrightarrow 0.
  }
\]
Similarly, we can define the \define{cap} to be the unit $\iota \maps 0 \slashedrightarrow a$ followed by the comultiplication $\delta \maps a \slashedrightarrow a + a$:
\[
\scalebox{1.2}{
\begin{tikzpicture}[scale=0.8]
\node (A) at (0,0) {$0$};
\node (B) at (1,1) {$a$};
\node (C) at (2,0) {$a+a$};
\path[->,font=\scriptsize,>=angle 90]
(A) edge node[above,pos=0.3]{$!$} (B)
(C)edge node[above,pos=0.3]{$\nabla$}(B);
\end{tikzpicture}
}
\]
which we can draw as follows:
\[
  \xymatrixrowsep{1pt}
  \xymatrixcolsep{30pt}
  \xymatrix{
 \captwo{.075\textwidth} 
    \\
    \cap \colon 0 \slashedrightarrow a+a 
  }
\]
In terms of electrical circuits, these give the ability to bend wires.  Mathematically they make $a$ into its own dual in $\lCsp(\A)$.  

For any object $a \in \lCsp(\A)$, the cap and cup obey the usual zigzag identities up to isomorphisms that in turn obey coherence laws called the swallowtail equations \cite[Cor.\ 5.8]{Stay}.   But in fact, as long as the zigzag identities hold up to isomorphism, the isomorphisms can always be tweaked to make them obey the swallowtail equations \cite[Thm.\ 2.7]{Pstragowski}.    Thus, in general, we can define a \define{Frobenius} pseudomonoid in a monoidal bicategory to be a pseudomonoid for which the cap and cup obey the zigzag identities up to isomorphism.   For an equivalent alternative definition, see \cite{Street}.

A \define{symmetric} Frobenius pseudomonoid is one whose underlying pseudomonoid is symmetric.  One can show this implies that its underlying pseudocomonoid is also symmetric in a dual sense, involving an isomorphism
\[
   \xymatrixrowsep{1pt}
  \xymatrixcolsep{30pt}
  \xymatrix{
   \cocommute{.1\textwidth} \;  \raisebox{0.2 em}{$\simRightarrow$} \;  \comult{.07\textwidth} \\
 \;  \; s \delta \simRightarrow \delta.
    }
\]
So far, we have seen that whenever $\A$ is a category with finite colimits, every object in $\lCsp(\A)$ is a symmetric Frobenius pseudomonoid in $\mathbf{Csp}(\A)$.   

But there is more!   The multiplication in this Frobenius pseudomonoid is left biadjoint to the comultiplication, with the counit $\varepsilon$ of the biadjunction being an isomorphism:
\[
   \xymatrixrowsep{1pt}
  \xymatrixcolsep{30pt}
  \xymatrix{
    \spec{.1\textwidth} \,\raisebox{0.2 em}{$\simRightarrow$} \, \idone{.1\textwidth}
    \\
 \; \varepsilon \maps  \mu \delta \simRightarrow 1_a.
    }
\]
We call a Frobenius pseudomonoid with this property \define{special}, extending the existing terminology where a Frobenius monoid is called special if
\[
  \xymatrix{
    \spec{.1\textwidth} \,\raisebox{0.2 em}{=} \, \idone{.1\textwidth}.
    }
\]
We do not know if there are additional coherence laws that the counit $\varepsilon$ obeys, which should be incorporated in the definition of `special'.

Following tradition we have spoken about pseudomonoids in a monoidal bicategory $\mathbf{D}$, which happens to be the loose bicategory of a monoidal double category $\lD$.   However, in this situation we might as well call these pseudomonoids in $\lD$.   Thus, we have shown:

\begin{thm}
Let $\A$ be a category with finite colimits.   Then the above structures make any object $a \in \A$ into a special symmetric Frobenius pseudomonoid in $\lCsp(\A)$.
\end{thm}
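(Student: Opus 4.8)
The plan is to assemble the theorem from ingredients already established, so that only a handful of explicit pushout computations remain to be done. Since $\A$ has finite colimits, Lemma~\ref{lem:cospans} makes $\lCsp(\A)$ a fibrant symmetric monoidal double category, so its loose bicategory $\mathbf{Csp}(\A)$ is a symmetric monoidal bicategory by the work of Hansen and Shulman. The preceding theorem already equips each object $a$ with the symmetric pseudomonoid structure $(\mu,\eta)$, together with an associator, unitors and symmetry satisfying the pentagon, triangle and hexagon. Dually---either by running the same argument in $\A^{\op}$, or by applying the conjoint pseudofunctor $\mathrm{conj}\maps\A^{\op}\to\mathbf{Csp}(\A)$ to the commutative comonoid structure that $a$ carries in $(\A^{\op},+)$---we obtain the symmetric pseudocomonoid structure $(\delta,\epsilon)$. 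Moreover, because $\lCsp(\A)$ is fibrant, $\mu=\nabla_\ast$ and $\delta=\nabla^\ast$ are the companion and conjoint of the single tight morphism $\nabla\maps a+a\to a$, so the companion--conjoint adjunction supplies a biadjunction $\mu\dashv\delta$ at no extra cost (and likewise $\eta\dashv\epsilon$ from $!\maps 0\to a$).

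Next I would introduce the cup $\epsilon\circ\mu\maps a+a\slashedrightarrow 0$ and cap $\delta\circ\eta\maps 0\slashedrightarrow a+a$ as the composite cospans displayed above, and verify the two zigzag identities up to coherent isomorphism. Each side of a zigzag equation is an iterated loose composite, hence computed by a finite diagram of pushouts in $\A$, and in every case the relevant pushout is of codiagonals and maps out of $0$ against one another; by the universal property these collapse to coproducts of copies of $a$, so both composites become the identity cospan on $a$ and the comparison $2$-cell is the canonical isomorphism of pushouts. With the zigzag identities available up to isomorphism, I would invoke \cite[Thm.\ 2.7]{Pstragowski} to adjust these isomorphisms so that the swallowtail equations hold, which by the definition adopted in this section makes $a$ a Frobenius pseudomonoid in $\mathbf{Csp}(\A)$. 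It is symmetric because its underlying pseudomonoid is, and the dual symmetry $s\delta\simRightarrow\delta$ of the pseudocomonoid is again a one-line pushout computation.

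For specialness, I would compute the loose composite $\mu\circ\delta\maps a\slashedrightarrow a$ directly. Its apex is the pushout of $a\xleftarrow{\nabla}a+a\xrightarrow{\nabla}a$; since $\nabla$ is split epic through either coprojection, the universal property forces this pushout to be $a$ with both structure maps the identity, so $\mu\circ\delta$ is the identity cospan up to the canonical isomorphism. That isomorphism is precisely the counit $\varepsilon\maps\mu\delta\simRightarrow 1_a$ of the biadjunction $\mu\dashv\delta$, and it is invertible, so the Frobenius pseudomonoid is special in the sense declared above. Collecting these facts gives the theorem.

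The main obstacle I anticipate is not any individual pushout but the coherence bookkeeping: checking that the chosen associator, unitors, symmetry, zigzag isomorphisms and the counit $\varepsilon$ are mutually compatible (pentagon, triangle, hexagon, swallowtail, and whatever additional laws a `special' Frobenius pseudomonoid ought to satisfy---the text itself flags that these last ones are not yet pinned down). The redeeming feature is that every $2$-cell in $\lCsp(\A)$ is built from a universal map out of a pushout, so each coherence diagram in sight commutes by the uniqueness half of the universal property; the real work is to phrase each law as such a diagram rather than to verify it by hand.
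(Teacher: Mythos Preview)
Your proposal is correct and follows essentially the same route as the paper, which presents this theorem as a summary of the preceding discussion rather than giving a separate proof: symmetric pseudomonoid via companions of $!$ and $\nabla$, symmetric pseudocomonoid via their conjoints, cup and cap from the composites, zigzag identities checked by pushout and then upgraded to swallowtail via Pstr\c{a}gowski, and specialness from the counit $\mu\delta\Rightarrow 1_a$ being invertible. Your explicit use of the companion--conjoint biadjunction $\nabla_\ast\dashv\nabla^\ast$ to obtain $\mu\dashv\delta$, and your computation that the pushout of $\nabla$ along itself is $a$ because $\nabla$ is split epic, are nice concrete additions that the paper leaves implicit.
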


What we have seen for cospans, now follows for structured or decorated cospans:
 
\begin{thm}
Let $\lD$ be a symmetric monoidal structured/decorated cospan double category.  Then any object $a \in \lD$ becomes a special symmetric Frobenius pseudomonoid in $\lD$.
\end{thm}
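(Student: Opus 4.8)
The plan is to deduce this immediately from the cospan case proved just above, by transporting structure along the exoskeleton double functor. Recall from Theorem~\ref{thm:structured/decorated_cospan_interfaces} that there is a symmetric monoidal double functor $\iota \maps \lCsp(\lD_0) \to \lD$ which restricts to the identity on $\lD_0$, and from the preceding theorem that whenever $\lD_0$ has finite colimits, every object of $\lD_0$ — regarded as an object of $\lCsp(\lD_0)$ — is a special symmetric Frobenius pseudomonoid there, with multiplication $\mu$ and comultiplication $\delta$ the companion and conjoint of the codiagonal $\nabla$, and unit $\eta$ and counit $\epsilon$ the companion and conjoint of $! \maps 0 \to a$. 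In the decorated case $\lD_0 = \A$ has finite colimits by hypothesis (Theorem~\ref{thm:decorated_cospan_1}), so this applies directly. In the structured case $\lD_0 = \A$ need only have finite coproducts, so $\lCsp(\lD_0)$ may fail to exist; there I would route instead through the outer shell, using the symmetric monoidal double functors $\lCsp(\tilde{\lD}_0) \xrightarrow{j} \lCsp(\lD_0) \xrightarrow{\iota} \lD$ of Section~\ref{subsec:outer_shell} (or a single application of Theorem~\ref{thm:structured_cospan_functoriality_2} / Theorem~\ref{thm:decorated_cospan_functoriality}), noting that $\tilde{\lD}_0$ always has finite colimits and contains $\Ob(\lD)$, so each object of $\lD$ already carries the cospan-level Frobenius data there.

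The steps, in order, are: (1) apply the cospan theorem to obtain, for each $a$, the loose morphisms $\mu,\eta,\delta,\epsilon$ together with the associator, unitors, symmetry, Frobenius isomorphisms, the cup and cap with their swallowtail-coherent zigzag isomorphisms, and the invertible biadjunction counit $\varepsilon \maps \mu\delta \simRightarrow 1_a$ witnessing speciality, all satisfying their coherence laws; (2) pass to loose bicategories — by Hansen--Shulman \cite{HS,Shulman2010}, $\mathbf{Csp}(\lD_0)$ and $\bD$ are symmetric monoidal bicategories and $\iota$ descends to a symmetric monoidal pseudofunctor $\mathbf{Csp}(\lD_0) \to \bD$; (3) invoke the fact that symmetric monoidal pseudofunctors preserve this entire package: they send symmetric pseudomonoids to symmetric pseudomonoids (\cite[Prop.\ 16]{DS}), preserve biadjunctions and duals and hence the Frobenius property (and, by the symmetric structure, the dual pseudocomonoid symmetry), and preserve invertibility of $2$-cells and hence speciality; (4) conclude by observing that because $\iota$ is the identity on $\lD_0$ and symmetric monoidal, the images $\iota(\mu),\iota(\eta),\iota(\delta),\iota(\epsilon)$ are precisely the loose morphisms in $\lD$ built from $\nabla$ and $!$ described in Section~\ref{subsec:frobenius} (using $L(0)=0$ and $L(a+a)\cong L(a)+L(a)$ in the structured case, and the empty decoration in the decorated case, exactly as in Lemmas~\ref{lem:structured_cospan_interfaces} and~\ref{lem:decorated_cospan_interfaces}), so the transported structure is the asserted one and its axioms land correctly because $\iota(a+a)=\iota(a)+\iota(a)$.

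The main obstacle is step~(3): making precise, and checking coherence for, the statement that a symmetric monoidal pseudofunctor preserves a \emph{special symmetric Frobenius pseudomonoid}. Preservation of symmetric pseudomonoids is standard, and preservation of the cup/cap duality data is essentially preservation of biadjoints; the delicate point is that "Frobenius" and "special" are here defined only up to coherent isomorphism (swallowtail equations, \cite[Cor.\ 5.8]{Stay}, \cite[Thm.\ 2.7]{Pstragowski}, and a notion of "special" whose full coherence the text admits is not yet pinned down), so one must verify the \emph{chosen} coherence $2$-cells transport compatibly, not merely that some such $2$-cells exist downstairs. A secondary technical nuisance is the hypothesis bookkeeping around $\lD_0$ not necessarily having all finite colimits in the structured case, which is why I would phrase the argument through $\lCsp(\tilde{\lD}_0)$; and one should record explicitly that it is the symmetric monoidal comparison cells of $\iota$ — not just its action on loose morphisms — that are being used, since every pseudomonoid and Frobenius axiom involves the tensor product.
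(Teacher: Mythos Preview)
Your proposal is correct and takes essentially the same approach as the paper: the paper's proof is simply to apply the previous theorem to $\A = \lD_0$ and then transport the special symmetric Frobenius pseudomonoid structure along the symmetric monoidal double functor $\iota \maps \lCsp(\lD_0) \to \lD$. You are in fact more careful than the paper on two points it glosses over --- the hypothesis bookkeeping in the structured case (where $\lD_0$ is only assumed to have finite coproducts) and the coherence content of ``symmetric monoidal pseudofunctors preserve special symmetric Frobenius pseudomonoids'' --- but your core argument is the same one-line transport.
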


\begin{proof}
Given a category $\A$ with finite colimits, we have seen how any object $a \in \A$ gives a special symmetric Frobenius pseudomonoid $a \in \lCsp(\A)$.   This holds in particular when $\A = \lD_0$ where $\lD$ is a symmetric monoidal structured/decorated cospan double category.    Applying the symmetric monoidal pseudofunctor $\iota \maps \Csp(\lD_0) \to \lD$, we obtain a special symmetric Frobenius pseudomonoid structure on the object $a$ regarded as an object of $\lD$.
\end{proof}

At the end of Section \ref{subsec:hypergraph}, following ideas of Libkind and Myers \cite{LM}, we proposed a definition of `hypergraph double category' aimed at isolating the most important common features of symmetric monoidal structured and decorated cospan double categories.   This definition categorified Fong and Spivak's slick definition of hypergraph category \cite{FS}.

We are now in a position to pursue a different definition of hypergraph double category, which categorifies the `old-fashioned' definition of a hypergraph category, Definition \ref{defn:hypergraph_category}.   Naively, we could define a \define{hypergraph double category} to be a symmetric monoidal double category $\lD$ where each object is equipped with the structure of a special symmetric Frobenius pseudomonoid, with these structures  compatible with the monoidal structure of $\lD$ in the sense that they obey all the equations in Definition \ref{defn:hypergraph_category}.   However, it may be overly strict to demand equations here.  Furthermore, besides having a multiplication, unit, comultiplication and counit, a symmetric Frobenius pseudomonoid is equipped with extra structure, namely various 2-isomorphisms.   Presumably these, too, must be compatible with the monoidal structure of $\lD$.

Thus, there are some coherence issues to sort out, of the kind only higher category theorists truly enjoy.
We leave this as a challenge:

\begin{problem}  
Find the correct definition of  `hypergraph double category' along the following lines: it is a symmetric monoidal double category $\lD$ where each object has the structure of a special symmetric Frobenius pseudomonoid and these structures are compatible with the symmetric monoidal structure of $\lD$.   To test the correctness of a candidate definition, show that every symmetric monoidal structured/decorated cospan category is a hypergraph double category according to this definition.   If possible, also show that this definition is equivalent to the one proposed at the end of Section \ref{subsec:hypergraph}.
\end{problem}

To conclude, let us recall why this issue is important.  We have some intuition of what open systems are in the variable sharing paradigm.  Structured and decorated cospans gives us \emph{examples}.  But what is the right \emph{general concept} of a symmetric monoidal double category of open systems in the variable sharing paradigm?  It may be a hypergraph double category.   But what, exactly, is that?  We have outlined two approaches to defining this concept.  Ideally they will lead to equivalent definitions.


\subsection*{Acknowledgements} 

The work described here is due to many people.  I would especially like to acknowledge my collaborators in work on decorated and structured cospans and the mathematics of open systems, including Brandon Coya, Kenny Courser, Jarson Erbele, Brendan Fong, Fabrizio Genovese, Xiaoyan Li, Jade Master, Joe Moeller, Nathaniel Osgood, Sophie Libkind, Evan Patterson, Blake Pollard, Mike Shulman, Mike Stay, and Christina Vasikakopoulou.   I thank Nathanael Arkor, Kevin Carlson and others on the Category Theory Community Server for helpful conversations about double categories, and David Jaz Myers for persistently pushing forward the theory of open systems.

\subsection*{Disacknowledgment}

The published version of this article is licensed under a \href{https://creativecommons.org/licenses/by/4.0/}{Creative Commons Attribution 4.0 International License}, which permits use, sharing, adaptation, distribution and reproduction in any medium or format, as long as one gives appropriate credit to the original author(s) and the source, provides a link to the Creative Commons licence, and indicates if changes were made.  

The version here is identical to the published article except for details of formatting.  This is clearly permitted by the license.   However, because the publisher, Springer Nature, is part of an oligopoly that feels all-powerful, they insisted at the last minute that I include here an arbitrary statement of their choosing, even though it contradicts my rights as listed above and I do not accept it.   Here is that statement:

\begin{quote}
This preprint has not undergone peer review (when applicable) or any post-submission improvements or corrections. The Version of Record of this article is published in Applied Categorical Structures, and is available online at \href{https://doi.org/10.1007/s10485-026-09885-9}{https://doi.org/10.1007/s10485-026-09885-9}.
\end{quote}

This is exactly the kind of thing that has made me boycott Springer.   I made an exception for the Bob Par\'e 80th birthday celebration, which I now regret.    Anyone wishing to keep their paper on an open-access repository should avoid Springer.

\end{document}